\newcommand{\ga}{\gamma}
\newcommand{\eit}{\end{itemize}}
\newcommand{\mc}[1]{\mathcal{#1}}
\newcommand{\mbb}[1]{\mathbb{#1}}
\newcommand{\mr}[1]{\mathrm{#1}}
\newcommand{\mtt}[1]{\mathtt{#1}}
\patchcmd{\BR@backref}{\newblock}{\newblock(page~}{}{}
\patchcmd{\BR@backref}{\par}{)\par}{}{}
\theoremstyle{plain}
\newtheorem{theorem}{Theorem}[section]
\newtheorem*{theorem*}{Theorem}
\newtheorem{proposition}[theorem]{Proposition}
\newtheorem{corollary}[theorem]{Corollary}
\newtheorem{example}[theorem]{Example}
\newtheorem{lemma}[theorem]{Lemma}
\theoremstyle{definition}
\newtheorem{definition}[theorem]{Definition}
\theoremstyle{remark}
\newtheorem{remark}[theorem]{Remark}
\newtheorem{claim}{Claim}[theorem]
\newtheorem{pclaim}{Claim}[theorem]
\newcommand{\defi}{\begin{definition}}
\newcommand{\fdefi}{\end{definition}}
\newcommand{\eje}{\begin{example}}
\newcommand{\feje}{\end{example}}
\newcommand{\ejes}{\begin{ejemplos}}
\newcommand{\fejes}{\end{ejemplos}}
\newcommand{\lema}{\begin{lemma}}
\newcommand{\flema}{\end{lemma}}
\newcommand{\teor}{\begin{theorem}}
\newcommand{\fteor}{\end{theorem}}
\newcommand{\nota}{\begin{remark}}
\newcommand{\fnota}{ \end{remark}}
\newcommand{\clam}{\begin{claim}}
\newcommand{\fclam}{\end{claim}}
\newcommand{\pclam}{\begin{pclaim}}
\newcommand{\fpclam}{\end{pclaim}}
\newcommand{\clams}{\begin{claim*}}
\newcommand{\fclams}{\end{claim*}}
\newcommand{\prop}{\begin{proposition}}
\newcommand{\fprop}{\end{proposition}}
\newcommand{\cor}{\begin{corollary}}
\newcommand{\fcor}{\end{corollary}}
\newcommand{\lclam}{\begin{lclaim}}
\newcommand{\flclam}{\end{lclaim}}
\newcommand{\ben}{\begin{enumerate}}
\newcommand{\een}{\end{enumerate}}
\newcommand{\bit}{\begin{itemize}}
\DeclareMathOperator{\flim}{\mathrm{FLim}}
\DeclareMathOperator{\Emb}{\mathrm{Emb}}
\DeclareMathOperator{\Aut}{\mathrm{Aut}}
\DeclareMathOperator{\osc}{\mathrm{Osc}}
\newcommand{\C}{{\mathbb C}}
\newcommand{\prue}{\begin{proof}}
\newcommand{\fprue}{\end{proof}}
\newcommand{\de}{\delta}
\DeclareMathOperator{\im}{\text{Im}}
\newcommand{\nrm}[1]{\|#1\|}
\newcommand{\con}{\subseteq}
\newcommand{\vep}{\varepsilon}
\newcommand{\buit}{\emptyset}
\begin{document}
\def\cprime{$'$}

\title{ The Ramsey property for Operator spaces and noncommutative Choquet
simplices}
\author[D. Barto\v{s}ov\'{a}]{Dana Barto\v{s}ov\'{a}}
\address{Department of Mathematics,
University of Florida,
PO Box 118105,
Gainesville, FL 32611.}
\email{dbartoso@andrew.cmu.edu}
\author[J. L{\'{o}}pez-Abad]{Jordi L{\'{o}}pez-Abad}
\address{Departamento de Matem\'{a}ticas Fundamentales, Facultad de
Ciencias, UNED, 28040 Madrid, Spain}
\email{abad@mat.uned.es}
\author[M. Lupini]{Martino Lupini}
\address{School of Mathematics and Statistics\\
Victoria University of Wellington\\
PO Box 600\\
Wellington 6140\\
New Zealand}
\email{martino.lupini@vuw.ac.nz}
\urladdr{http://www.lupini.org/}
\author[B. Mbombo]{B. Mbombo}
\address{Collège Lionel-Groulx, 100 rue Duquet, Sainte-Therese, QC J7E3G6, Canada}
\email{BR.MDempowo@glc.qc.ca}
\address{Departamento de Matem\'atica, Instituto de Matem\'atica e Estat\'istica,
Universidade de S\~ao Paulo, 
rua do Mat\~ao, 1010, 
05508-090 S\~ao Paulo, SP, Brazil}

\subjclass[2000]{\textcolor{black}{Primary 05D10, 46L07; Secondary 37B05, 46L05}}
\thanks{\textcolor{black}{D.B. was supported by the grant FAPESP 2013/14458-9. J.L.-A. \ was
partially supported by the
Ministerio de Econom\'\i a y Competitividad  grant MTM2016-76808 (Spain), the Ministerio de Ciencia e Innovación grant PID2019-107701GB-I00 (Spain) and the Fapesp grants
2013/24827-1 and  2016/25574-8 (Brazil). M.L.\ was partially supported by the NSF Grant
DMS-1600186 and by the Marsden Fund Fast-Start Grant VUW1816 from the Royal
Society of New Zealand. B.M. was supported by Funda\c{c}\~{a}o de
Amparo \`{a} Pesquisa do Estado de S\~{a}o Paulo (FAPESP) postdoctoral
grant, processo 12/20084-1. This work was initiated during a visit of
J.L.-A.\ to the Universidade de Sao P\~{a}ulo in 2014, and continued during
visits of D.B.\ and J.L.-A. to the Fields Institute in the Fall 2014, a
visit of M.L.\ to the Instituto de Ciencias Matem\'{a}ticas in the Spring
2015, and a visit of all the authors at the Banff International Research
Station in occasion of the Workshop on Homogeneous Structures in the Fall
2015. The hospitality of all these institutions is gratefully acknowledged.}}
\keywords{\textcolor{black}{noncommutative Gurarij space, noncommutative Poulsen simplex, extreme amenability, Ramsey
property, operator space, operator system,
oscillation stability, Dual Ramsey Theorem}}

\begin{abstract}
The noncommutative Gurarij space $\mathbb{\mathbb{\mathbb{NG}}}$, initially
defined by Oikhberg, is a canonical object in the theory of operator spaces.
As the Fra\"{\i}ss\'{e} limit of the class of finite-dimensional nuclear
operator spaces, it can be seen as the noncommutative analogue of the
classical Gurarij Banach space. In this paper, we prove that the
automorphism group of $\mathbb{\mathbb{NG}}$ is extremely amenable, i.e.\
any of its actions on compact spaces has a fixed point. The proof relies on
the Dual Ramsey Theorem, and a version of the Kechris--Pestov--Todorcevic
correspondence in the setting of operator spaces.

Recent work of Davidson and Kennedy, building on previous work of Arveson,
Effros, Farenick, Webster, and Winkler, among others, shows that nuclear
operator systems can be seen as the noncommutative analogue of Choquet
simplices. The analogue of the Poulsen simplex in this context is the matrix
state space $\mathbb{NP}$ of the Fra\"{\i}ss\'{e} limit $A(\mathbb{NP})$ of
the class of finite-dimensional nuclear operator systems. We show that the
canonical action of the automorphism group of $\mathbb{NP}$ on the compact
set $\mathbb{NP}_1$ of unital linear functionals on $A(\mathbb{NP})$ is
minimal and it factors onto any  minimal action, whence providing a
description of the universal minimal flow of \textrm{Aut}$\left( \mathbb{NP}%
\right) $.

\end{abstract}

\maketitle

\section{Introduction}

Dynamics studies, generally speaking, actions of groups on spaces. When the
group $G$ under consideration is \emph{topological}, it is natural to
restrict to actions that are \emph{continuous}. While a classification of
arbitrary continuous actions is hopeless, one can hope for a good structure
theory after restricting to continuous actions \emph{on compact spaces}, also
called \emph{flows} or $G$-flows. In this case, by Zorn's lemma one can
conclude that every flow admits a subflow that is furthermore \emph{minimal }%
(with respect to inclusion). Thus, in some sense flows that are minimal
(i.e.\ have no nontrivial subflows) can be regarded as building blocks of
more general flows. Within the class of minimal flows there is a unique one
that is largest or \emph{universal}, in the sense that it factors onto any
 minimal flow \cite{ellis_universal_1960,gutman_new_2013}. Such a
universal minimal $G$-flow $M(G)$ is thus a canonical object in
the study of the dynamics of $G$, as it encodes fundamental properties of
the class of \emph{all }$G$-flows. For instance, $M(G)$ being
reduced to a single point, in which case the group $G$ is called \emph{%
extremely amenable}, is equivalent to the assertion that every $G$-flow has
a fixed point. More generally, obtaining a concrete description of $M(
G)$ entails at least in principle a classification of all minimal $G$%
-flows, which are precisely the factors of $M(G)$. This is
especially interesting when $M(G)$ turns out to be
\textquotedblleft small\textquotedblright\ or, specifically, \emph{metrizable%
}, in which case any minimal $G$-flow is metrizable as well.

While $M(G)$ is never metrizable when $G$ is locally compact
and not compact, breakthroughs due to Pestov \cite{pestov_dynamics_2006,
pestov_ramsey_2002, pestov_isometry_2007} culminating in the work of
Kechris, Pestov, and Todorcevic \cite{kechris_fraisse_2005} provided several
examples of \textquotedblleft large\textquotedblright\ topological groups for
which $M(G)$ is metrizable, or even trivial. These groups arise
as automorphism groups of mathematical structures satisfying a strong
homogeneity property called \emph{ultrahomogeneity}. Examples of such
structures are the linear order of the rationals $\left( \mathbb{Q},<\right) 
$, the Urysohn metric space $\mathbb{U}$, the infinite random graph $%
\mathcal{R}$, and the countable atomless Boolean algebra $\mathcal{B}$. If $G
$ denotes the automorphism group of one of these examples, then $M(
G) $ is trivial in the case of the rationals and the Urysohn space,
and it is equal to the space of all linear orderings on $\mathcal{R}$ or to
the space of \emph{natural} linear orderings on $\mathcal{B}$, in the case
of the infinite Random graphs and the countable atomless Boolean algebra,
respectively.

In the case of the automorphism group $G$ of a ultrahomogeneous structure $X$%
, the Kechris--Pestov--Todorcevic (KPT) correspondence from \cite%
{kechris_fraisse_2005}, later extended by Van The \cite{van_the_more_2013},
provides a way to compute $M(G)$ by studying the \emph{age }of $%
X$, which is the collection of all the \textquotedblleft
small\textquotedblright\ (finitely-generated) substructures of $X$. For
instance, in the case of the linear order of the rationals, of the Urysohn
space, and the infinite random graph, the age is the class of all the finite
linear orders, all finite metric spaces, and all finite graphs,
respectively. Precisely, the KPT correspondence characterizes extreme
amenability of $G$ in terms of a strong combinatorial property of the age of 
$X$, called the \emph{Ramsey property}, the name being due to the fact that
in the case of finite linear orders it can be seen as a reformulation the
finite Ramsey theorem. This provides a combinatorial way to establish extreme
amenability of $G$ or to compute $M(G)$ after finding a
suitable extremely amenable \textquotedblleft large\textquotedblright\
subgroup. (A different approach, using the method of concentration of
measure, was pioneered by Gromov ad Milman \cite{gromov_topological_1983},
who employed it to prove that the group of unitary operators on the Hilbert
space endowed with the strong operator topology is extremely amenable.)

Until recently, virtually all examples of application of the KPT
correspondence consisted of discrete structures arising in algebra and
combinatorics. This has changed in recent years, where the scope of the KPT
correspondence has been extended to \textquotedblleft
continuous\textquotedblright\ structures from geometry and functional
analysis. One can place in this context the main results of \cite%
{bartosova_2019}, where the Gurarij space and the Poulsen simplex are
studied. The Gurarij space is a classical Banach space constructed by
Gurarij \cite{gurarij_spaces_1966} that can be characterized as the only approximately 
ultrahomogeneous separable Banach space whose age consists of all the
finite-dimensional Banach spaces \cite%
{kubis_proof_2013,ben_yaacov_fraisse_2015}. It is proved in \cite%
{bartosova_2019} that the group $\mathrm{Aut}\left( \mathbb{G}\right) $ of
automorphisms of $\mathbb{G}$ is extremely amenable. Besides the KPT
correspondence, the proof relies on an analysis of the structure of
isometric embedding between finite-dimensional Banach spaces of the form $%
\ell _{\infty }^{n}$. Such embeddings are described in terms of rigid
surjections between finite ordered sets. This makes it possible to apply the
Dual Ramsey Theorem \cite{graham_ramseys_1971} to, ultimately, establish the
(approximate) Ramsey property for the class of finite-dimensional Banach
spaces.

A similar technique is employed in \cite{bartosova_2019} to determined the
universal minimal flow of the group $\mathrm{Aut}\left( \mathbb{P}\right) $
of affine homeomorphisms of the Poulsen simplex \cite{poulsen_simplex_1961}.
The Poulsen simplex $\mathbb{P}$ is the unique Choquet simplex with the
remarkable property that its boundary $\partial \mathbb{P}$ is dense within
the simplex $\mathbb{P}$ itself---in stark contrast with what happens for
the more common \emph{Bauer simplices}, which have closed boundary. One can
describe $\mathbb{P}$ in terms of homogeneous structures by means of the%
\emph{\ Kadison correspondence }\cite{alfsen_compact_1971}, which assigns to
a Choquet simplex $K$ the \emph{function system} $A\left( K\right) $ of
continuous affine scalar-valued functions on $K$. This establishes an
equivalence of categories between compact convex sets and\emph{\ }function
systems, where Choquet simplices correspond to \emph{nuclear }function
systems. The function system $A\left( \mathbb{P}\right) $ corresponding to
the Poulsen simplex is the only (nuclear) function system whose age is the
class of all the finite-dimensional function systems. Relying on this
correspondence, as well as the KPT correspondence and the Dual Ramsey
Theorem, it is proved in \cite{bartosova_2019} that $\mathbb{P}$ itself,
regarded as an $\mathrm{Aut}\left( \mathbb{P}\right) $-flow with the
canonical action of $\mathrm{Aut}\left( \mathbb{P}\right) $, is universal
(and minimal), whence it is the universal minimal $\mathrm{Aut}\left( 
\mathbb{P}\right) $-flow.

In this paper these results are extended to the \emph{noncommutative
analogues }of these objects, which can be constructed in the setting of
operator spaces and operator systems. An operator space $X$ is a complex
vector space endowed with a norm on $K\left( H\right) \otimes X$, where $%
B\left( H\right) $ is the algebra of operators on the separable
infinite-dimensional Hilbert space $H$, and $K\left( H\right) \subseteq
B\left( H\right) $ is the algebra of \emph{compact }operators. Concretely,
separable operator spaces can be thought of as closed subspaces $X$ of a
C*-algebra or, equivalently, of $B\left( H\right) $, endowed with the norm
induced by the inclusion $K\left( H\right) \otimes X\subseteq B\left(
H\right) \otimes B\left( H\right) \subseteq B\left( H\otimes H\right) $.
Every Banach space can be regarded as an operator space, and the operator
spaces that arise in this fashion are precisely those that can be realized
as subspaces of \emph{commutative }C*-algebras. The theory of operator
spaces can be thought of as a noncommutative generalization of the theory of
Banach spaces, and it has applications in the study of C*-algebras and
quantum information theory \cite{palazuelos_survey_2016}.

In much the same way, function systems admit \emph{operator systems} as
noncommutative analogues. An operator system is an operator space that can
be realized as a closed subspace $B\left( H\right) $ that is \emph{unital},
in the sense that it contains the identity operator---the unit. (Naturally,
in this context morphisms are also required to be unital, namely to map the
unit to the unit.) Choquet simplices in turn correspond to the operator
systems that are \emph{nuclear}, which is an approximation property akin to
amenability of groups or Banach algebras. In this context, the Kadison
correspondence between function systems and compact convex sets can be
generalized to a correspondence between compact \emph{matrix convex }sets
and operator systems, which we recall in Section \ref{Subs:operator_systems}%
. Operator systems can thus be thought of as noncommutative analogues of
compact convex sets, and noncommutative Choquet theory in this context has
been recently developed in \cite{davidson_noncommutative_2019} building on 
\cite%
{farenick_extremal_2000,farenick_pure_2004,webster_krein-milman_1999,effros_matrix_1997,effros_matrix_2009,effros_aspects_1978}%
. Operator systems also arise in the study of operator algebras, nonlocal
games, and free real algebraic geometry \cite%
{helton_free_2013,helton_tracial_2017}.

The \emph{noncommutative Gurarij space }$\mathbb{\mathbb{NG}}$ was
constructed by Oikhberg \cite{oikhberg_non-commutative_2006} and can be
characterized as the only (approximately) ultrahomogeneous separable \emph{%
nuclear} operator space whose age contains all the operator spaces that can
be realized as a subspace of a finite-dimensional C*-algebra \cite%
{lupini_uniqueness_2016}. In this paper, we prove that, as in the
commutative case, the group $\mathrm{\mathrm{Aut}}\left( \mathbb{\mathbb{NG}}%
\right) $ of automorphisms of $\mathbb{\mathbb{NG}}$ is extremely amenable.

The \emph{noncommutative Poulsen simplex }$\mathbb{NP}$ is a noncommutative
Choquet simplex (compact matrix convex set) whose corresponding operator
system $A\left( \mathbb{NP}\right) $ of matrix-valued continuous affine
functions is the unique separable nuclear operator system whose age contains
all the operator systems that can be realized as unital subspaces of a
finite-dimensional C*-algebra \cite{lupini_fraisse_2018}. One can also
characterize $A\left( \mathbb{NP}\right) $ as the unique separable nuclear
operator system that is \emph{universal }in the sense of Kirchberg and
Wassermann \cite{kirchberg_c*-algebras_1998,lupini_kirchberg_2018}. This
property can be though of as a noncommutative analogue of having dense
extreme boundary. It is also true that the \emph{matrix extreme }points of $%
\mathbb{NP}$ are dense in $\mathbb{NP}$, although it is unknown if this
property characterizes $\mathbb{NP}$ among the metrizable noncommutative
Choquet simplices. Due to the canonical role that $\mathbb{P}$ plays within
the class of Choquet simplices (as, for instance, it contains any metrizable
Choquet simplex as a proper face), it is natural to expect that $\mathbb{NP}$
will be an example of fundamental importance for noncommutative Choquet
theory \cite{davidson_noncommutative_2019}. We prove in this paper that the
compact space $\mathbb{NP}_{1}$ of $1$-dimensional points of $\mathbb{NP}$%
---which are precisely the unital linear functionals on $A\left( \mathbb{NP}%
\right) $---endowed with the canonical action of the group $\mathrm{\mathrm{%
Au}t}\left( \mathbb{NP}\right) $ of matrix-affine homeomorphisms of $\mathbb{%
NP}$, is the universal minimal $\mathrm{Aut}\left( \mathbb{NP}\right) $%
-flow. This is the natural noncommutative analogue of the corresponding
result for $\mathbb{P}$ from \cite{bartosova_2019}.

The paper is organized as follows. In Section 2 we review some fundamental
notions concerning operator spaces and operator systems. We also define the
notion of Fra\"{\i}ss\'{e} classes, Fra\"{\i}ss\'{e} limits, and the KPT
correspondence restricted to this specific context. In Section 3, we
introduce several nuclear operator spaces as Fra\"{\i}ss\'{e} limits of
classes of finite dimensional \emph{injective} operator spaces, and then in
Section \ref{Subs:ARP} we establish the (approximate) Ramsey property for
each of these classes, obtaining a proof of extreme amenability of $\mathrm{%
Aut}\left( \mathbb{NG}\right) $. In Section \ref{iojiorjiewjwe5}, we
consider the more general case of structures consisting of an operator space
with a distinguished morphism to another (fixed) operator space $R$.
Finally, in Section 4 we consider noncommutative Choquet simplices, operator
systems, operator systems with distinguished state, and we compute the
universal minimal flow of $\mathrm{\mathrm{Aut}}\left( \mathbb{NP}\right) $.

\subsubsection*{Acknowledgments}

We are grateful to Ita\"{\i}, Ben Yaacov, Clinton Conley, Valentin Ferenczi,
Alexander Kechris, Matt Kennedy, Julien Melleray, Lionel Nguyen Van Th\'{e},
Vladimir Pestov, Slawomir Solecki, Stevo Todorcevic, and Todor Tsankov for
several helpful conversations and remarks.

\section{Fra\"{\i}ss\'{e} classes and the Ramsey property of operator spaces
and systems}

\label{basic_notions}

\subsection{Operator spaces and operator systems}

\label{jio3i4jorwe3223}

We now recall some fundamental notions and results from the theory of
operator spaces. The monographs \cite%
{pisier_introduction_2003,effros_operator_2000,blecher_operator_2004}
provide good introductions to this subject. An \emph{operator space} $E$ is
a closed linear subspace of the space $B(H)$ of bounded linear operators on
some complex Hilbert space $H$. The inclusion $E\subset B(H)$ induces \emph{%
matrix norms }on each $M_{n}(E)$, $n\in \mathbb{N}$, the space of $n\times n$
matrices with entries in $E$. The norm of an element $\left[ x_{ij}\right] $
of $M_{n}(E)$ is defined as the operator norm of $\left[ x_{ij}\right] $
when regarded in the canonical way as an linear operator on the $n$-fold
Hilbertian direct sum of $H$ by itself. The \emph{$\infty $-sum} of two
operator spaces $E\subset B(H_{0})$ and $F\subset B(H_{1})$ is the space $%
E\oplus _{\infty }F\subset B(H_{0}\oplus H_{1})$ of operators of the form 
\begin{equation*}
\begin{bmatrix}
x & 0 \\ 
0 & y%
\end{bmatrix}%
\end{equation*}%
for $x\in E$ and $y\in F$. One can equivalently define operator spaces as
the closed subspaces of unital C*-algebras. A unital\emph{\ }C*-algebra is a
closed subalgebra of $B(H)$ containing the identity operator and closed
under taking adjoints. 
Unital C*-algebras can be abstractly characterized as the complex Banach
algebras with multiplicative identity and involution satisfying the
C*-identity $\left\Vert a^{\ast }a\right\Vert =\left\Vert a\right\Vert ^{2}$%
. Operator spaces also admit an abstract characterization, in terms of
Ruan's axioms for the matrix norms \cite[Theorem 13.4]%
{paulsen_completely_2002}. Precisely, a matrix normed complex vector space $%
X $ is an operator space if and only if the matrix norms satisfy the identity%
\begin{equation*}
\left\Vert \alpha _{1}^{\ast }x_{1}\beta _{1}+\cdots +\alpha _{\ell }^{\ast
}x_{\ell }\beta _{\ell }\right\Vert \leq \left\Vert \alpha _{1}^{\ast
}\alpha _{1}+\cdots +\alpha _{\ell }^{\ast }\alpha _{\ell }\right\Vert \max
\left\{ \left\Vert x_{1}\right\Vert ,\ldots ,\left\Vert x_{\ell }\right\Vert
\right\} \left\Vert \beta _{1}^{\ast }\beta _{1}+\cdots +\beta _{\ell
}^{\ast }\beta _{\ell }\right\Vert
\end{equation*}%
for $n_{1},\ldots ,n_{\ell },n\in \mathbb{N}$, $x_{i}\in M_{n_{i}}\left(
X\right) $, and $\alpha _{i},\beta _{i}\in M_{n_{i},n}\left( \mathbb{C}%
\right) $. In this identity, one consider the natural notion of matrix
multiplication between matrices over $X$ and scalar matrices. The norm of
scalar matrices is the operator norm, where matrices are regarded as
operators on finite-dimensional Hilbert spaces.

The abstract characterization of C*-algebras shows that, whenever $K$ is a
compact Hausdorff space, $C(K)$ with the pointwise operations and the
supremum norm is a unital C*-algebra. The unital C*-algebras of this form
are precisely the \emph{abelian} ones. Any complex Banach space $E$ has a
canonical operator space structure, obtained by representing $E$
isometrically as a subspace of $C(\mathrm{Ball}(E^{\ast }))$, where the unit
ball $\mathrm{Ball}(E^{\ast })$ of $E^*$ is endowed with the weak*-topology.
This operator space structure on $E$ is called its \emph{minimal
quantization }\cite{effros_operator_2000} and the corresponding operator
space is denoted by $\mathrm{MIN}(E)$. The matrix norms on $\mathrm{MIN}(E)$
are defined by $\left\Vert \left[ x_{ij}\right] \right\Vert =\sup_{\phi \in 
\mathrm{Ball}(E^{\ast })}\left\Vert \left[ \phi (x_{ij})\right] \right\Vert $
for $\left[ x_{ij}\right] \in M_{n}(E)$. The operator spaces that arise in
this fashion are called \emph{minimal operator spaces}. These are precisely
the operator spaces that can be represented inside an \emph{abelian }unital
C*-algebra. Arbitrary operator spaces can be thought of as the
noncommutative analog of Banach spaces.

If $\phi :E\rightarrow F$ is a linear map between operator spaces, then one
can consider its amplifications $\phi ^{(n)}:M_{n}(E)\rightarrow M_{n}(F)$
obtained by applying $\phi $ entrywise. The \emph{completely bounded norm }$%
\left\Vert \phi \right\Vert _{\mathrm{cb}}$ of $\phi $ is the supremum of $%
\left\Vert \phi ^{(n)}\right\Vert $ for $n\in \mathbb{N}$. A linear map $%
\phi $ is \emph{completely bounded} if $\left\Vert \phi \right\Vert _{%
\mathrm{cb}}$ is finite, and \emph{completely contractive} if $\left\Vert
\phi \right\Vert _{\mathrm{cb}}$ is at most $1$. The \textrm{cb}-distance
between two completely bounded linear maps $\phi ,\psi :E\rightarrow F$ is
defined by $d_{\mathrm{cb}}\left( \phi ,\psi \right) :=\left\Vert \phi -\psi
\right\Vert _{\mathrm{cb}}$. From now on, we regard the space of completely
bounded maps from $E$ to $F$, and all its subspaces, as a metric space
endowed with the \textrm{cb}-distance $d_{\mathrm{cb}}$.

We regard operator spaces as the objects of a category having completely
contractive linear maps as morphisms. An isomorphism in this category is a
surjective linear \emph{complete isometry}, which is an invertible
completely contractive linear map with completely contractive inverse. If $E$
is an operator space, then its automorphism group \textrm{Aut}$\left(
E\right) $ is the group of surjective linear complete isometries from $E$ to
itself. When $E$ is separable, this is a Polish group when endowed with the
topology of pointwise convergence. The \emph{dual operator space} of an
operator space $E$ is a canonical operator space structure on the space $%
E^{\ast }$ of (completely) bounded linear functionals on $E$, obtained by
identifying completely isometrically $M_{n}(E^{\ast })$ with the space of
completely bounded linear maps from $E$ to $M_{n}(\mathbb{C})$; see \cite[\S %
3.2]{effros_operator_2000}.

When $E,F$ are Banach spaces, and $\phi :E\rightarrow F$ is a linear map,
then $\phi $ is bounded if and only if it is completely bounded when $E$ and 
$F$ are endowed with their minimal operator space structure. Furthermore, in
this case one has the equality of norms $\left\Vert \phi :E\rightarrow
F\right\Vert =\left\Vert \phi :\mathrm{MIN}(E)\rightarrow \mathrm{MIN}%
(F)\right\Vert _{\mathrm{cb}}$. Thus the category of Banach spaces and
contractive linear maps can be seen as a full subcategory of the category of
operator spaces and completely contractive linear maps. In particular, the
group of surjective linear isometries of a Banach space $E$ can be
identified with the group of surjective linear complete isometries of $%
\mathrm{MIN}(E)$. We will identify a Banach space $E$ with the corresponding
minimal operator space $\mathrm{MIN}(E)$.

There is a natural class of geometric objects that correspond to operator
spaces, generalizing the correspondence between Banach spaces and compact
absolutely convex sets. A \emph{compact rectangular matrix convex set} in a topological vector space $V$  is a
sequence $(K_{n,m})$ of compact convex subsets of $M_{n,m}(V)$, the $n\times m$-matrices over $V$,   endowed with a notion of
rectangular convex combination. This is an expression $\alpha _{1}^{\ast
}p_{1}\beta _{1}+\cdots +\alpha _{\ell }^{\ast }p_{\ell }\beta _{\ell }$ for 
$p_{i}\in K_{n_{i},m_{i}}$, $\alpha _{i}\in M_{n_{i},n}(\C)$, and $\beta _{i}\in
M_{m_{i},m}(\C)$ satisfying $\left\Vert \alpha _{1}^{\ast }\alpha _{1}+\cdots
+\alpha _{\ell }^{\ast }\alpha _{\ell }\right\Vert \leq 1$ and $\left\Vert
\beta _{1}^{\ast }\beta _{1}+\cdots +\beta _{\ell }^{\ast }\beta _{\ell
}\right\Vert \leq 1$. The notion of an affine map and extreme points admit
natural rectangular matrix analogs, where usual convex combinations are
replaced with rectangular matrix convex combinations. When $E$ is an
operator space, let $\mathrm{C\mathrm{\mathrm{Bal}l}}(E^{\ast })$ be the
sequence $(K_{n,m})$, where  each $K_{n,m}$ is the unit ball of $M_{n,m}(E^{\ast })$. It is proved in 
\cite{fuller_boundary_2018} that any compact rectangular matrix convex set
arises in this way. Furthermore the correspondence $E\mapsto \mathrm{C%
\mathrm{Ball}}(E^{\ast })$ is a contravariant equivalence of categories from
the category of operator spaces and completely contractive maps to the
category of compact rectangular matrix convex sets and continuous
rectangular affine maps.

An \emph{operator system} is a closed linear subspace $X$ of the algebra $B(H)$ for
some Hilbert space $H$ that is \emph{unital} and \emph{self-adjoint}, i.e.\
contains the identity operator and is closed under taking adjoints. In
particular, the space $M_{n}(\mathbb{C})$ has a natural operator system
structure, obtained by identifying $M_{n}(\mathbb{C})$ with $B(\ell
_{2}^{n}) $. An operator system $X$ inherits from the inclusion $X\subset
B(H)$ an involution $x\mapsto x^{\ast }$, which corresponds to taking
adjoints, and a distinguished element $1$ (the $\emph{unit}$), which
corresponds to the identity operator. Furthermore, for every $n\in \mathbb{N}
$, $M_{n}(X)$ has a canonical norm and a notion of \emph{positivity},
obtained by setting $\left[ x_{ij}\right] \geq 0$ if and only if $\left[
x_{ij}\right] $ is positive when regarded as an operator on the $n$-fold
Hilbertian sum of $H$ by itself. The self-adjoint part $X_{\mathrm{sa}}$ of $%
X$ is the unital subspace of $X$ containing those $x\in X$ such that $%
x=x^{\ast }$. A linear map $\phi :X\rightarrow Y$ between operator systems
is \emph{unital} if it maps the unit of $X$ to the unit of $Y$, \emph{%
positive }if it maps positive elements to positive elements, and \emph{%
completely positive} if every amplification $\phi ^{(n)}$ is positive. We
abbreviate \textquotedblleft unital completely positive linear
map\textquotedblright\ as \textquotedblleft \emph{ucp map}\textquotedblright
. It is well known that a unital linear map $\phi $ between operator systems
is completely positive if and only if it is completely contractive. A unital
complete isometry $\phi :X\rightarrow Y$ is called a \emph{complete order
embedding}. A surjective complete order embedding is a \emph{complete order
isomorphism}. One can abstractly characterize the pairs $(X,1)$, where $X$ is
an operator space and $1\in X$, that are operator systems, in the sense that
there exists a complete isometry $\phi :X\rightarrow B(H)$ mapping $1$ to
the identity operator and $X$ onto a closed self-adjoint subspace of $B(H)$ 
\cite{blecher_metric_2011}. An earlier abstract characterization of operator
systems in terms of the matrix positive cones is due to Choi and Effros \cite%
{choi_injectivity_1977}.

\subsection{Fra\"{\i}ss\'{e} classes and Fra\"{\i}ss\'{e} limits}

We recall in this section Fra\"{\i}ss\'{e} classes and Fra\"{\i}ss\'{e}
limits in the setting of operator spaces and operator systems. These can be seen as particular instances of Fra\"{\i}ss\'{e} classes and Fra\"{\i}ss%
\'{e} limits of metric structures in the sense of \cite%
{ben_yaacov_model_2008,melleray_extremely_2014}. In order to make this paper
more self-contained, we will introduce all these notions in this particular
case.

Let $\mathtt{Osp}$ be the class of operator spaces. Given $X,Y\in \mathtt{Osp%
}$ and $\delta \geq 0$, let $\Emb_{\delta }^{\mathtt{Osp}}(X,Y)$ be the
space of $\delta $-embeddings form $X$ into $Y$, that is, injective complete
contractions $\phi :X\rightarrow Y$ such that $\Vert \phi ^{-1}\Vert _{%
\mathrm{cb}}\leq 1+\delta $. In this terminology, the complete isometries
are precisely the $0$-embeddings, which we will simply call embeddings. Let $%
\mathrm{Aut}^{\mathtt{Osp}}(X)$ be the group of surjective embeddings from $X
$ to itself. One can deduce from the small perturbation lemma in operator
space theory \cite[Lemma 2.13.2]{pisier_introduction_2003} that $\Emb%
_{\delta }^{\mathtt{Osp}}(X,Y)$ is a compact metric space whenever $X,Y$ are
finite-dimensional objects of $\mathtt{Osp}$. When $\delta =0$, we will write $%
\Emb^{\mathtt{Osp}}(X,Y)$ instead of $\Emb_{0}^{\mathtt{Osp}}(X,Y)$. The
members of $\Emb^{\mathtt{Osp}}(X,Y)$ are called $\mathtt{Osp}$\emph{%
-embeddings} from $X$ into $Y$. We also have that, when $X$ is separable, $%
\Aut^{\mathtt{Osp}}(X)$ is a Polish group when endowed with the topology of
pointwise convergence. Whenever there is no possibility of confusion we will
use $\Emb_{\delta }(X,Y)$ and $\Aut(X)$ instead of $\Emb_{\delta }^{\mathtt{%
Osp}}(X,Y)$ and $\Aut^{\mathtt{Osp}}(X)$, respectively.

Given an operator space $R$, by an \emph{$R$-operator space} we mean a pair $%
\boldsymbol{X}=(X,s_{X})$, where $X$ is an operator space and $%
s_{X}:X\rightarrow R$ is a complete contraction. Let $\mathtt{Osp}^{R}$ be
the collection of $R$-operator spaces. Given $\boldsymbol{X}=(X,s_{X})$ and $%
\boldsymbol{Y}=(Y,s_{Y})$ in $\mathtt{Osp}^{R}$, and $\delta \geq 0$, let $%
\Emb_{\delta }^{\mathtt{Osp}^{R}}(\boldsymbol{X},\boldsymbol{Y})$ be the
space of $\delta $-embeddings $\phi :X\rightarrow Y$ such that $\Vert
s_{Y}\circ \phi -s_{X}\Vert _{\mathrm{cb}}\leq \delta $. This is a metric
space when we consider the metric $d_{\mathrm{cb}}(\phi ,\psi ):=\Vert \phi
-\psi \Vert _{\text{\textrm{cb}}}$. Again, we will call a $0$-embedding,
simply, an embedding. We let $\Aut^{\mathtt{Osp}^{R}}(\boldsymbol{X})$ be
the group of surjective linear complete isometries $\phi $ from $X$ to
itself such that $s_{Y}\circ \phi =s_{X}$. Note that when $R$ is the trivial
operator space $\{0\}$, $R$-operator spaces can be identified with operator
spaces.

Similarly, we let $\mathtt{Osy}$ be the class of operator systems. Given $%
X,Y\in \mathtt{Osy}$, and $\delta \geq 0$, let $\Emb_{\delta }^{\mathtt{Osy}%
}(X,Y)$ be the collection of all injective \emph{unital} complete
contractions $\phi :X\rightarrow Y$ such that $\left\Vert \phi
^{-1}\right\Vert _{\text{\textrm{cb}}}\leq 1+\delta $. For a fixed operator
system $R$, let $\mathtt{Osy}^{R}$ be the class of \emph{$R$-operator systems%
}, that is pairs $\boldsymbol{X}=(X,s_{X})$ where $X$ is a operator system
and $s_{X}:X\rightarrow R$ is an unital complete contraction. We define $\Emb%
_{\delta }^{\mathtt{Osy}^{R}}(\boldsymbol{X},\boldsymbol{Y})$ to be the
collection of all injective unital complete contractions $\phi :X\rightarrow
Y$ such that $\left\Vert \phi ^{-1}\right\Vert _{\text{\textrm{cb}}}\leq
1+\delta $ and $\left\Vert s_{X}-s_{Y}\circ \phi \right\Vert _{\mathrm{cb}%
}\leq \delta $, endowed with the metric $d_{\text{\textrm{cb}}}\left( \phi
,\psi \right) :=\Vert \phi -\psi \Vert _{\mathrm{cb}}$. We also define $\Aut%
^{\mathtt{Osy}}(X)$ to be the group of unital surjective complete isometries
from $X$ to itself, and $\Aut^{\mathtt{Osy}^{R}}(\boldsymbol{X})$ to be the
group of unital surjective complete isometries $\phi $ from $X$ to itself
such that $s_{X}\circ \phi =s_{X}$. Also, when $\boldsymbol{X}=(X,s_{X})$, $%
\boldsymbol{Y}=(Y,s_{Y})$ are $R$-operator spaces or $R$-operator systems,
we write $\boldsymbol{X}\subseteq \boldsymbol{Y}$ to denote that $X\subseteq
Y$ and $s_{Y}\upharpoonright _{X}=s_{X}$. 

In the next definitions, $\mathtt{C}$ is either $\mathtt{Osp}$, $\mathtt{Osy}
$, $\mathtt{Osp}^{R}$ for a fixed operator space $R$, or $\mathtt{Osy}^{R}$
for a fixed operator system $R$.

\begin{definition}[Gromov-Hausdorff pseudometric]
The \emph{Gromov-Hausdorff pseudometric} $d_{\mathtt{C}}$ is defined by setting, for finite-dimensional $X,Y\in \mathtt{C}$, 
$d_{\mathtt{C}}(X,Y)$ to be the infimum of all $\delta >0$
such that there exist $f\in \Emb_{\delta }^{\mathtt{C}}(X,Y)$ and $g\in \Emb%
_{\delta }^{\mathtt{C}}(Y,X)$ such that $\Vert g\circ f-\mathrm{Id}_{X}\Vert
_{\mathrm{cb}}<\delta $ and $\Vert f\circ g-\mathrm{Id}_{Y}\Vert _{\mathrm{cb%
}}<\delta $.
\end{definition}

It is worth to point out that when $\mathtt{C}$ is the class of operator
spaces, then it is easily seen that for every finite dimensional operator
systems $X$ and $Y$ one has that $d_{\mathtt{C}}(X,Y)\leq d_{\mathrm{BM}%
}(X,Y)\leq 3d_{\mathtt{C}}(X,Y)$, where 
\begin{equation*}
d_{\mathrm{BM}}(X,Y):=\log \left( \inf \{{\Vert T\Vert _{\mathrm{cb}}\Vert
T^{-1}\Vert _{\mathrm{cb}}}\,:\,{T:X\rightarrow Y\text{ is a complete
bounded isomorphism}}\}\right) 
\end{equation*}%
is the well known \emph{Banach-Mazur} pseudometric. It follows that the
class of operator spaces of dimension $n$ has diameter $\leq n$, although
this class is not compact for $n\geq 3$. The reader can find more
information in \cite[Chapter 21]{pisier_introduction_2003}.

In the following, we let $\varpi :\mathbb{R}^{+}\rightarrow \mathbb{R}^{+}$
be a strictly increasing function, continuous at $0$, and vanishing at $0$, such that  $\varpi(\de)\ge \de$.

\begin{definition}[Stable Fraïssé class]
\label{stable_amalg}  Let $\mathcal{A}\subseteq \mathtt{C}$.

\begin{enumerate}[(a)]
\item $\mathcal{A}$ satisfies the \emph{stable amalgamation property (SAP)} with modulus $\varpi$ when
for every $X,Y,Z\in \mathcal{A}$, $\delta \geq 0$, $\varepsilon >0$, $\phi
\in \mathrm{Emb}_{\delta }^{\mathtt{C}}(X,Y)$, and $\psi \in \mathrm{Emb}%
_{\delta }^{\mathtt{C}}(X,Z)$, there exist $V\in \mathcal{A}$, $i\in \mathrm{%
Emb}^{\mathtt{C}}(Y,V)$, and $j\in \mathrm{Emb}^{\mathtt{C}}(Z,V)$ such that 
$\Vert i\circ \phi -j\circ \psi \Vert _{\mathrm{cb}}\leq \varpi (\delta
)+\varepsilon $.

\item $\mc A$ is a {\em stable amalgamation class} with modulus  $\varpi$ when $\mc A$ has the (SAP) and  the  \emph{joint embedding property}, that is,  for every 
$X,Y\in \mathcal{A}$ there exists $Z\in \mathcal{A}$ such that $\mathrm{Emb}%
^{\mathrm{C}}(X,Z)$ and $\mathrm{Emb}^{\mathrm{C}}(Y,Z)$ are nonempty.

\item $\mc A$ is a {\em stable Fraïssé class} with modulus $\varpi$ when it is a stable amalgamation class and $\mathcal{A}$ is separable with respect to the Gromov-Hausdorff
pseudometric $d_{\text{\texttt{C}}}$.

\end{enumerate}
\end{definition}

It is easy to see that if $\mathcal{A}$ has an element that can be embedded
into any other member of $\mathcal{A}$ then the stable amalgamation property
for $\mathcal{A}$ implies the joint embedding property for $\mathcal{A}$.

\begin{definition}[Stable Homogeneity]
Let $\mathcal{A}\subseteq \mathtt{C}$. We say that $M\in \mathtt{C}$
satisfies the stable homogeneity property with respect to $\mathcal{A}$ with
modulus $\varpi $ if:

\begin{enumerate}[(a)]

\item $\Emb^{\mathtt{C}}(X,M)$ is nonempty for every $X\in \mathcal{A}$.

\item For every $X\in \mathcal{A}$, $\delta \geq 0$, $\varepsilon >0$, and $%
f,g\in \Emb_{\delta }^{\mathtt{C}}(X,M)$ there is $\alpha \in \mathrm{Aut}^{%
\mathtt{C}}(M)$ such that $\Vert \alpha \circ g-f\Vert _{\text{\textrm{cb}}%
}\leq \varpi (\delta )+\varepsilon $.
\end{enumerate}

When $\mathcal{A}$ is the collection of all finite-dimensional $X\subseteq M$
in $\mathtt{C}$, we say that $M$ is \emph{stably homogeneous} with modulus $%
\varpi $.
\end{definition}

\begin{definition}
Given a stable Fra\"{\i}ss\'{e} class $\mathcal{A}\subseteq \mathtt{C}$, we
write $[\mathcal{A}]$ to denote the class of $E\in \mathtt{C}$ such that
every finite-dimensional $X\subseteq E$ is a limit, with respect to the
Gromov-Hausdorff distance, of a sequence of \emph{subspaces} of elements in $%
\mathcal{A}$. Let $\langle \mathcal{A}\rangle $ be the collection of all
finite-dimensional elements of $[\mathcal{A}]$.
\end{definition}

Notice that $E\in [\mathcal{A}]$ if and only if for every
finite-dimensional $X\subseteq E$ and every $\delta >0$ there is some $Y\in 
\mathcal{A}$ such that $\Emb_{\delta }^{\mathtt{C}}(X,Y)\neq \emptyset $.

\begin{definition}[Fra\"{\i}ss\'{e} limit]
Let $\mathcal{A}\subseteq \mathtt{C}$. The stable \emph{Fra\"{\i}ss\'{e}
limit} of $\mathcal{A}$ (with modulus $\varpi $), denoted by $\flim\mathcal{A%
}$, if it exists, is the unique separable object in $[\mathcal{A}]$ that is $%
\mathcal{A}$-stably homogeneous (with modulus $\varpi $).
\end{definition}

A usual back-and-forth argument shows the following; see for instance \cite[%
Subsection 2.6]{lupini_fraisse_2018}, \cite[Theorem 2.26]{ferenczi_amalgamation_2017}.

\begin{proposition}
\label{Proposition:exists}Suppose that $\mathcal{A}\subseteq \mathtt{C}$ is
a stable amalgamation class, and $M\in \mathtt{C}$ is separable. Then
the Fra\"{\i}ss\'{e} limit $\flim\mathcal{A}$ exists. Furthermore the
following assertions are equivalent:

\begin{enumerate}[(1)]

\item $M=\flim\mathcal{A}$.

\item $M$ is stably homogeneous with modulus $\varpi $, and the class $%
\mathrm{Age}^\mathtt{C}(M)$ of all finite-dimensional $X\in \mathtt{C}$ such
that $X\subseteq M$ is equal to $\langle\mathcal{A}\rangle$.
\end{enumerate}
\end{proposition}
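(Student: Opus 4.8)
Proof proposal. The statement is proved by the standard two-step recipe: first build an explicit candidate limit $M_\infty$ by a chain construction with bookkeeping, then run a back-and-forth to obtain uniqueness and hence both implications. I will describe the plan and flag the delicate point.

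Construction of $M_\infty$. Fix a countable $d_{\mathtt{C}}$-dense subset $\mathcal{A}_0\subseteq\mathcal{A}$. Using the joint embedding property together with the $(\mathrm{SAP})$ with modulus $\varpi$ (at $\delta=0$, where $\varpi(0)=0$), recursively build a chain $E_0\subseteq E_1\subseteq\cdots$ of finite-dimensional members of $\langle\mathcal{A}\rangle$, each inclusion being an exact $\mathtt{C}$-embedding, steered by a bookkeeping device that guarantees: (i) every $X\in\mathcal{A}_0$ admits an exact $\mathtt{C}$-embedding into some $E_n$; and (ii) for every $n$, every finite-dimensional $X\subseteq E_n$, every $\delta\ge 0$, every $\varepsilon>0$, and every pair $f,g\in\Emb_\delta^{\mathtt{C}}(X,E_n)$ drawn from a fixed countable dense set of such data, there are $m>n$ and $i,j\in\Emb^{\mathtt{C}}(E_n,E_m)$ with $\Vert i\circ f-j\circ g\Vert_{\mathrm{cb}}\le\varpi(\delta)+\varepsilon$ — the amalgamation target $E_m$ being supplied, up to an arbitrarily small perturbation, precisely by applying the $(\mathrm{SAP})$ to $f$ and $g$ over $X$. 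Put $M_\infty:=\overline{\bigcup_n E_n}$, a separable object of $\mathtt{C}$. Since $\mathcal{A}_0$ is $d_{\mathtt{C}}$-dense and each $E_n\in\langle\mathcal{A}\rangle$, clause (i) and a small-perturbation argument for finite-dimensional subspaces give $M_\infty\in[\mathcal{A}]$ and $\mathrm{Age}^{\mathtt{C}}(M_\infty)\subseteq\langle\mathcal{A}\rangle$; for the reverse inclusion one uses that an object which $\delta$-embeds into $M_\infty$ for every $\delta>0$ in fact embeds exactly, which itself follows from a back-and-forth inside $\bigcup_n E_n$ and the compactness of $\Emb_\delta^{\mathtt{C}}$ between finite-dimensional objects.

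Stable homogeneity of $M_\infty$. Given finite-dimensional $X\subseteq M_\infty$, $\delta\ge 0$, $f,g\in\Emb_\delta^{\mathtt{C}}(X,M_\infty)$, and $\varepsilon>0$, first push $X,f,g$ into some $E_n$ up to an arbitrarily small perturbation, then invoke clause (ii) to get $i,j\in\Emb^{\mathtt{C}}(E_n,E_m)$ with $\Vert i\circ f-j\circ g\Vert_{\mathrm{cb}}\le\varpi(\delta)+\varepsilon/2$. This single application of the modulus yields the seed partial near-isomorphism between $i(E_n)$ and $j(E_n)$; from there an alternating back-and-forth along dense sequences in $\bigcup_k E_k$, using only exact $\mathtt{C}$-embeddings and approximately-commuting amalgamations whose defects are summed to be $<\varepsilon/2$, produces $\alpha\in\Aut^{\mathtt{C}}(M_\infty)$ with $\Vert\alpha\circ g-f\Vert_{\mathrm{cb}}\le\varpi(\delta)+\varepsilon$. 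Surjectivity of $\alpha$ is automatic because the back-and-forth exhausts a dense set on each side. Hence $M_\infty$ is stably homogeneous with modulus $\varpi$ relative to its own age $\langle\mathcal{A}\rangle$; in particular it is separable, lies in $[\mathcal{A}]$, and is $\mathcal{A}$-stably homogeneous with modulus $\varpi$.

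Uniqueness and the equivalence. A parallel back-and-forth (started with the joint embedding property, and fed at each step by the $(\mathrm{SAP})$ with modulus and by stable homogeneity) shows that any two separable members of $[\mathcal{A}]$ that are $\mathcal{A}$-stably homogeneous with modulus $\varpi$ are completely isometrically isomorphic; this is the well-definedness of $\flim\mathcal{A}$, and it identifies $\flim\mathcal{A}$ with $M_\infty$. For $(1)\Rightarrow(2)$: if $M=\flim\mathcal{A}$ then $M$ is completely isometrically isomorphic to $M_\infty$, and the properties ``$\mathrm{Age}^{\mathtt{C}}(\cdot)=\langle\mathcal{A}\rangle$'' and ``stably homogeneous with modulus $\varpi$'' are invariant under such isomorphisms, so $M$ inherits them. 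For $(2)\Rightarrow(1)$: assume $\mathrm{Age}^{\mathtt{C}}(M)=\langle\mathcal{A}\rangle$ and $M$ stably homogeneous with modulus $\varpi$. Since $\mathcal{A}\subseteq\langle\mathcal{A}\rangle=\mathrm{Age}^{\mathtt{C}}(M)$, every $X\in\mathcal{A}$ embeds into $M$, so clause (a) of $\mathcal{A}$-stable homogeneity holds and clause (b) for $X\in\mathcal{A}$ is a special case of the stable homogeneity of $M$; moreover every finite-dimensional $X\subseteq M$ lies in $\langle\mathcal{A}\rangle\subseteq[\mathcal{A}]$, whence $M\in[\mathcal{A}]$. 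Thus $M$ is a separable, $\mathcal{A}$-stably homogeneous element of $[\mathcal{A}]$, i.e.\ $M=\flim\mathcal{A}$ by uniqueness.

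Main obstacle. The genuinely delicate part is the bookkeeping in the construction and in the homogeneity argument: one must ensure that the modulus $\varpi$ — which need not behave under iteration — is invoked exactly once, as the seed of each back-and-forth, while all the countably many later extension and amalgamation steps are carried out with exact $\mathtt{C}$-embeddings so that only a summable sequence of auxiliary errors accumulates. Keeping this separation clean, and invoking the small-perturbation lemma for operator spaces and operator systems to move freely between $\delta$-embeddings and exact embeddings of finite-dimensional objects (and to use compactness of $\Emb_\delta^{\mathtt{C}}$), is what makes the approximate data assemble into honest automorphisms and complete isometries in the limit. Everything else is routine.
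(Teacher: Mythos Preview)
Your proposal is essentially the standard back-and-forth argument that the paper itself invokes: the paper does not give its own proof of this proposition, but simply states that ``a usual back-and-forth argument shows the following'' and refers to \cite[Subsection 2.6]{lupini_fraisse_2018} and \cite[Theorem 2.26]{ferenczi_amalgamation_2017}. Your two-step recipe (chain construction with bookkeeping, then back-and-forth for uniqueness and homogeneity) is exactly the approach those references develop, and your emphasis on invoking the modulus $\varpi$ only once as a seed while keeping later errors summable is the correct way to handle the approximate setting.

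One small caveat: you fix a countable $d_{\mathtt{C}}$-dense subset $\mathcal{A}_0\subseteq\mathcal{A}$, which presupposes separability of $\mathcal{A}$; the proposition as stated only assumes $\mathcal{A}$ is a stable amalgamation class, not a stable Fra\"{\i}ss\'{e} class. In practice this is harmless for the classes the paper cares about, and the cited references build the limit via the disposition property (which the paper also isolates just after the proposition), so that every $X\in\mathcal{A}$ embeds into $M_\infty$ by extending from a common small piece rather than by density of $\mathcal{A}_0$. You may want to rephrase clause~(i) accordingly, but this does not affect the substance of your argument.
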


Notice that whenever $\flim\langle \mathcal{A}\rangle $ exists, $\flim%
\mathcal{A}$ also exists and it must be equal to $\flim\langle \mathcal{A}%
\rangle $. Stable Fra\"{\i}ss\'{e} classes and stable Fra\"{\i}ss\'{e}
limits are in particular Fra\"{\i}ss\'{e} classes and Fra\"{\i}ss\'{e}
limits as \emph{metric structures} in the sense of \cite%
{ben_yaacov_fraisse_2015}. One can realize $\flim\mathcal{A}$ as the limit
of an inductive sequence of elements of $\mathcal{A}$ with \texttt{C}%
-embeddings as connective maps, and it can be proved that every separable
structure in $[\mathcal{A}]$ admits a  $\mathtt{C}$-embedding into $\flim%
\mathcal{A}$.

\textcolor{black}{The nomenclature ``homogeneous'' is related to the concept of ``disposition'' in Banach space theory,  that  for example was used by V. I. Gurarij in \cite{gurarij_spaces_1966}   to define his space (of universal “placement”).  At a midpoint of both notions,  we say that $M\in \mtt C$ is {\em stably of  $\mc A$-disposition} (with modulus $\varpi$) when  $\Emb^\mtt C(X,M)\neq \buit$ for every $X\in \mc A$ and when for every $\de\ge 0$ and $\vep>0$, every $X,Y\in \mc A$ every $f\in \Emb_\de^\mtt C(X, M)$ and $\iota\in \Emb_\de^\mtt C(X,Y)$ there is some $g\in \Emb^\mtt C(Y,M)$ such that $\nrm{g\circ \iota -f}_\mr{cb}\le \varpi(\de)+\vep$. It it proved in \cite[Proposition 2.12]{lupini_fraisse_2018}, implication (6)$
\Rightarrow $(1),  that a separable $M\in [\mc A]$ is the stable Fraïssé limit of $\mc A$ if and only if $M$ is  {stably of  $\mc A$-disposition}.}

Several structures in functional analysis arise as the Fra\"{\i}ss\'{e}
limit of a suitable class $\mathcal{A}$. For example the class of
finite-dimensional operator Hilbert spaces is a Fra\"{\i}ss\'{e} class, and
its corresponding limit is the separable operator Hilbert space $\mathrm{OH}$
introduced and studied in \cite{pisier_operator_1996}. Another natural
example of a family with the stable amalgamation property is the collection
of finite-dimensional Banach spaces $\{\ell _{p}^{n}\}_{n\geq 0}$ for every $%
p\in (1,+\infty )$. In the case $p=2$ one can use the polar decomposition
for bounded operators on a Hilbert space to deduce that every $\delta $%
-embedding between Hilbert spaces is close to an embedding. The other cases
are treated in \cite{ferenczi_amalgamation_2017}. In this case one uses a
result by Schechtman in \cite{Schechtman_1979} stating that for every such $%
p\neq 2$ there exists a function $\varpi _{p}:\mathbb{R}_{+}\rightarrow 
\mathbb{R}_{+}$ continuous at $0$ and vanishing at $0$, with the property
that if $\phi :\ell _{p}^{k}\rightarrow \ell _{p}^{m}$ is a $\delta $%
-embedding for some $\delta >0$, then there exist $n\in \mathbb{N}$, $I\in %
\Emb(\ell _{p}^{m},\ell _{p}^{n})$, and $J\in \Emb(\ell _{p}^{k},\ell
_{p}^{n})$ such that $\Vert J-I\circ \phi \Vert \leq \varpi _{p}(\delta )$.
The corresponding Fra\"{\i}ss\'{e} limit $\flim\{\ell _{p}^{n}\}_{n}$ of $%
\{\ell _{p}^{n}\}_{n}$ is the Lebesgue space $L_{p}[0,1]$. When $p$ is an
even integer other than $2$, the space $L_{p}[0,1]$ is not stably
homogeneous or, equivalently, the class $\langle \{\ell
_{p}^{n}\}_{n}\rangle $ does not have the stable amalgamation property. In
fact, in this case, $L_{p}[0,1]$ is not even \emph{approximately
ultrahomogeneous }as shown in \cite{lusky_consequences_1978}; see also \cite%
{ferenczi_amalgamation_2017}. An operator space $M$ is approximately
ultrahomogeneous when for every finite-dimensional $X\subseteq M$, every
complete isometry $\phi :X\rightarrow M$, and every $\varepsilon >0$ there
is a surjective linear complete isometry $\alpha :M\rightarrow M$ such that $%
\Vert \alpha \upharpoonright _{X}-\phi \Vert _{\mathrm{cb}}\leq \varepsilon $%
. Obviously, stably homogeneous spaces are approximately ultrahomogeneous.
Lusky proved in \cite{lusky_consequences_1978} that the space $L_{p}[0,1]$
is approximately ultrahomogeneous when $p\in (1,+\infty )$ is not an even
integer. This has been recently improved on \cite[Theorem 4.1]%
{ferenczi_amalgamation_2017} where it is shown that the spaces $L_{p}[0,1]$
for those $p$'s are \textquotedblleft quasi\textquotedblright\ stably
homogeneous in the sense that there is a modulus of stability depending on
dimensions $\widetilde{\varpi }:\mathbb{N}\times [0,\infty )
\rightarrow [0,\infty )$ such that for every $\delta \geq 0$, $%
\varepsilon >0$, every finite dimensional subspace $X\subseteq L_{p}[0,1]$,
and $f,g\in \mathrm{Emb}_{\delta }(X,L_{p}[0,1])$ there exists $\alpha \in 
\mathrm{Aut}(L_{p}[0,1])$ such that $\Vert \alpha \circ g-f\Vert _{\mathrm{cb%
}}\leq \varpi (\dim X,\delta )+\varepsilon $.

The Ramsey property of the following classes, proved to be Fra\"{\i}ss\'{e}
in \cite{lupini_fraisse_2018}, are the main subject of the present paper. Recall that an operator space $X$ is \emph{injective} if for every operator
spaces $Y\subseteq Z$, every complete contraction from $Y$ to $X$ can be
extended to a complete contraction from $Z$ to $X$. One defines injective
operator systems similarly, by replacing complete contractions with unital
complete contractions.

\begin{theorem}
\label{injective_classes_are_fraisse}Let $\mathtt{C}$ be either $\mathtt{Osp}
$ or $\mathtt{Osy}$. Suppose that $\mathcal{I}$ is a countable class of
finite-dimensional injective elements of $\mathtt{C}$ such that for $X,Y\in 
\mathcal{I}$, the $\infty $-sum $X\oplus _{\infty}Y$ embeds into an element
of $\mathcal{I}$. Then $\mathcal{I}$ is a stable amalgamation class and $\langle \mathcal{I}\rangle $ is a
stable Fra\"{\i}ss\'{e} class, both with stability modulus $\varpi (\delta
)=\delta $ if $\mathtt{C}$ is $\mathtt{Osp}$, and $\varpi \left( \delta
\right) =2\delta $ if $\mathtt{C}$ is $\mathtt{Osy}$.
\end{theorem}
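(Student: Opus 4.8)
The plan is to establish the stable amalgamation property (SAP) and the joint embedding property for $\mathcal{I}$ directly, exploiting injectivity of its members and the hypothesis on $\infty$-sums, then to invoke Proposition \ref{Proposition:exists} to produce the Fra\"{\i}ss\'{e} limit of $\mathcal{I}$, and finally to transfer the Fra\"{\i}ss\'{e} conditions from $\mathcal{I}$ to $\langle\mathcal{I}\rangle$ through that limit. For SAP, let $X,Y,Z\in\mathcal{I}$, $\delta\ge 0$, $\varepsilon>0$, $\phi\in\Emb_{\delta}^{\mathtt{C}}(X,Y)$, $\psi\in\Emb_{\delta}^{\mathtt{C}}(X,Z)$, and use the hypothesis to fix $V\in\mathcal{I}$ with $Y\oplus_{\infty}Z\subseteq V$ (in the operator system case $Y\oplus_{\infty}Z$ carries the unit $(1_{Y},1_{Z})$). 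The idea is to realize $Y$ and $Z$ inside $Y\oplus_{\infty}Z$ as graphs $i(y)=(y,\theta_{Y}(y))$ and $j(z)=(\theta_{Z}(z),z)$ for complete contractions $\theta_{Y}\colon Y\to Z$ and $\theta_{Z}\colon Z\to Y$ --- ucp maps when $\mathtt{C}=\mathtt{Osy}$ --- chosen so that $\theta_{Z}\circ\psi$ approximates $\phi$ and $\theta_{Y}\circ\phi$ approximates $\psi$; since an $\infty$-sum is used, $i$ and $j$ are then automatically $\mathtt{C}$-embeddings (complete order embeddings in the $\mathtt{Osy}$ case, as $\theta_{Y},\theta_{Z}$ are then completely positive), and $\Vert i\circ\phi-j\circ\psi\Vert_{\mathrm{cb}}=\max\{\Vert\phi-\theta_{Z}\circ\psi\Vert_{\mathrm{cb}},\Vert\psi-\theta_{Y}\circ\phi\Vert_{\mathrm{cb}}\}$. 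When $\mathtt{C}=\mathtt{Osp}$, the map $\tfrac{1}{1+\delta}\,\phi\circ\psi^{-1}\colon\psi(X)\to Y$ is a complete contraction; extend it through $\psi(X)\subseteq Z$ to a complete contraction $\theta_{Z}\colon Z\to Y$ by injectivity of $Y$, so that $\theta_{Z}\circ\psi=\tfrac{1}{1+\delta}\phi$ and $\Vert\phi-\theta_{Z}\circ\psi\Vert_{\mathrm{cb}}\le\delta$; build $\theta_{Y}$ symmetrically from injectivity of $Z$. This yields $\Vert i\circ\phi-j\circ\psi\Vert_{\mathrm{cb}}\le\delta=\varpi(\delta)$.

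When $\mathtt{C}=\mathtt{Osy}$ the rescaled map $\tfrac{1}{1+\delta}\phi\circ\psi^{-1}$ is no longer unital, so this time one regards $\phi\circ\psi^{-1}\colon\psi(X)\to Y$ as a unital map with $\Vert\cdot\Vert_{\mathrm{cb}}\le 1+\delta$, perturbs it to a ucp map $\rho\colon\psi(X)\to Y$ with $\Vert\rho-\phi\circ\psi^{-1}\Vert_{\mathrm{cb}}\le 2\delta$, and extends $\rho$ to a ucp map $\theta_{Z}\colon Z\to Y$ by injectivity of $Y$ with no further loss; this gives $\Vert\phi-\theta_{Z}\circ\psi\Vert_{\mathrm{cb}}\le 2\delta$, and together with the symmetric $\theta_{Y}$ one obtains $\Vert i\circ\phi-j\circ\psi\Vert_{\mathrm{cb}}\le 2\delta=\varpi(\delta)$. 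I expect this perturbation step --- correcting a nearly unital and completely contractive map between operator systems to a genuine ucp map, and doing so with the right constant --- to be the main technical obstacle, and it is precisely where the factor $2$ in the operator system case enters.

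The joint embedding property for $\mathcal{I}$ is easy: for $X,Y\in\mathcal{I}$ choose $V\in\mathcal{I}$ with $X\oplus_{\infty}Y\subseteq V$; then $x\mapsto(x,0)$ is an $\mathtt{Osp}$-embedding $X\to V$, and in the $\mathtt{Osy}$ case $x\mapsto(x,\theta(x))$ is a complete order embedding $X\to V$ for any ucp map $\theta\colon X\to Y$, for instance the injective extension of $\lambda 1_{X}\mapsto\lambda 1_{Y}$ from $\mathbb{C}1_{X}$. Hence $\mathcal{I}$ is a stable amalgamation class with modulus $\varpi$, and since it is countable, Proposition \ref{Proposition:exists} provides the Fra\"{\i}ss\'{e} limit $M:=\flim\mathcal{I}$, which is separable (a countable increasing union of members of $\mathcal{I}$), is stably homogeneous with modulus $\varpi$, and satisfies $\mathrm{Age}^{\mathtt{C}}(M)=\langle\mathcal{I}\rangle$.

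It remains to see that $\langle\mathcal{I}\rangle$ is a stable Fra\"{\i}ss\'{e} class with modulus $\varpi$. It is separable for $d_{\mathtt{C}}$, being the $d_{\mathtt{C}}$-closure of the family of subspaces of members of $\mathcal{I}$, which is separable since $\mathcal{I}$ is countable and, for each $A\in\mathcal{I}$ and $k\le\dim A$, the $k$-dimensional subspaces of $A$ form a compact metric space. For amalgamation in $\langle\mathcal{I}\rangle$, take $X,Y,Z\in\langle\mathcal{I}\rangle=\mathrm{Age}^{\mathtt{C}}(M)$, $\delta$-embeddings $\phi\colon X\to Y$ and $\psi\colon X\to Z$, and $\varepsilon>0$; fix $\mathtt{C}$-embeddings $\iota_{Y}\colon Y\to M$ and $\iota_{Z}\colon Z\to M$, so that $f:=\iota_{Y}\circ\phi$ and $g:=\iota_{Z}\circ\psi$ are $\delta$-embeddings of $X$ into $M$; stable homogeneity of $M$ yields $\alpha\in\Aut^{\mathtt{C}}(M)$ with $\Vert\alpha\circ g-f\Vert_{\mathrm{cb}}\le\varpi(\delta)+\varepsilon$; then any finite-dimensional $V\in\mathrm{Age}^{\mathtt{C}}(M)$ containing $\iota_{Y}(Y)$ and $\alpha(\iota_{Z}(Z))$, together with the corestrictions $i\colon Y\to V$ of $\iota_{Y}$ and $j\colon Z\to V$ of $\alpha\circ\iota_{Z}$, satisfies $\Vert i\circ\phi-j\circ\psi\Vert_{\mathrm{cb}}=\Vert f-\alpha\circ g\Vert_{\mathrm{cb}}\le\varpi(\delta)+\varepsilon$. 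Joint embedding for $\langle\mathcal{I}\rangle$ holds because any two members of $\mathrm{Age}^{\mathtt{C}}(M)$ lie in a common finite-dimensional subspace of $M$, which completes the plan.
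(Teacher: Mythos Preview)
The paper does not actually prove this theorem; it is stated with an attribution to \cite{lupini_fraisse_2018} (see the sentence preceding the theorem), so there is no in-paper proof to compare against. Your argument is essentially correct and follows the natural strategy one would expect from that reference: realize the amalgam inside $Y\oplus_{\infty}Z$ via graph embeddings built from extensions through injectivity, then bootstrap to $\langle\mathcal{I}\rangle$ via the limit $M=\flim\mathcal{I}$ and its stable homogeneity.

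Two remarks. First, your identification of the $\mathtt{Osy}$ perturbation step as the crux is exactly right; the bound $2\delta$ comes from a Wittstock-type decomposition: writing the unital self-adjoint map $\sigma=\phi\circ\psi^{-1}$ (with $\|\sigma\|_{\mathrm{cb}}\le 1+\delta$, target injective) as $\sigma_{1}-\sigma_{2}$ with $\sigma_{i}$ completely positive and $\|\sigma_{1}(1)+\sigma_{2}(1)\|\le 1+\delta$ forces $\|\sigma_{2}\|_{\mathrm{cb}}\le\delta/2$ and $1\le\sigma_{1}(1)\le 1+\delta/2$, after which conjugating $\sigma_{1}$ by $\sigma_{1}(1)^{-1/2}$ yields a ucp map within $2\delta$ of $\sigma$. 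You may want to spell this out rather than leave it as an expected obstacle. Second, a minor point: Proposition~\ref{Proposition:exists} as stated in the paper only asserts existence of $\flim\mathcal{A}$ for a stable amalgamation class; that the resulting limit is stably homogeneous \emph{with the same modulus} $\varpi$ is what you use to transfer SAP to $\langle\mathcal{I}\rangle$, and while this is true (and is how the limit is constructed in \cite{lupini_fraisse_2018}), it is worth making explicit that you are invoking this refinement.
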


For a class $\mc A$ of operator spaces, and an operator space $R$, we let $\mc A^R$ be the class of $R$-operator spaces of the form $(X,s)$ where $X\in \mc A$. There is a close relation between the stable amalgamation property of $\mc A$ and the one of $\mc A^R$. Similar considerations and notation apply in the case of operator systems.

\begin{proposition}
\label{oij32irj23jr32} Let $\mathtt{C}$ be either $\mathtt{Osp}$ or $\mathtt{%
Osy}$. Suppose that $\mathcal{A}$ is a class of finite-dimensional injective
elements of $\mathtt{C}$ that has the stable amalgamation property with
modulus $\varpi (\delta )$. Suppose that $R\in \mathtt{C}$ is separable and
such that every element in $\mathrm{Age}^{\mathtt{C}}(R)$ embeds into an
element of $\mathcal{A}$.

If either: (a) for $X,Y\in \mathcal{A}$, the $\infty $-sum $X\oplus _{\infty
}Y$ belongs to $\mathcal{A}$, or (b) $R$ is injective, and for $X,Y\in 
\mathcal{A}$, the $\infty $-sum $X\oplus _{\infty }Y$ has a $\mathtt{C}$%
-embedding into an element of $\mathcal{A}$, then the class $\mathcal{A}^{R}$
satisfies the stable amalgamation property with modulus $\varpi (\delta
)+\delta $.

Moreover, {suppose that each element of $\mathcal{A}$ is injective, and $R$ is $\mathcal{A}-$\emph{nuclear}, that is, the
identity map of $R$ is the pointwise limit of $\mathtt{C}$-morphisms that
factor through elements of $\mathcal{A}$; see \cite[Definition 3.4]%
{lupini_fraisse_2018}. Then the Fra\"{i}ss\'{e} limit $\flim\mathcal{A}^{R}$ is $(%
\flim\mathcal{A},\Omega )$} for an appropriate complete contraction $\Omega: \flim\mathcal{A}\to R$, that when $\mtt C=\mtt{Osy}$ is in addition unital (see Proposition \ref{Proposition:fraisse-functional} {\it(3)} and Proposition \ref{Proposition:fraisse-state} {\it(3)} for more details).
\end{proposition}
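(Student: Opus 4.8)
\emph{Plan for the stable amalgamation of $\mc A^{R}$.}
The idea is to reduce the amalgamation of $R$-operator spaces to that of their underlying operator spaces by a graph trick that stores the distinguished maps in an extra $\oplus_\infty$-summand. Fix $\boldsymbol X=(X,s_X)$, $\boldsymbol Y=(Y,s_Y)$, $\boldsymbol Z=(Z,s_Z)$ in $\mc A^{R}$, $\de\ge 0$, $\vep>0$, $\phi\in\Emb_{\de}^{\mtt C^{R}}(\boldsymbol X,\boldsymbol Y)$ and $\psi\in\Emb_{\de}^{\mtt C^{R}}(\boldsymbol X,\boldsymbol Z)$. First I would set $R_{0}:=s_X(X)+s_Y(Y)+s_Z(Z)\subseteq R$; this is a finite-dimensional element of $\mathrm{Age}^{\mtt C}(R)$ (and, when $\mtt C=\mtt{Osy}$, an operator system, since each $s$ is unital, hence self-adjoint, and $1_{R}=s_X(1_X)\in R_{0}$), so by hypothesis there is a $\mtt C$-embedding $\be\colon R_{0}\to B$ with $B\in\mc A$. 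Next I would fix a completely contractive $\tilde\be\colon B\to R$ with $\tilde\be\upharpoonright_{\be(R_{0})}=\be^{-1}$, unital in the operator system case: in case (b) this is immediate from the injectivity of $R$ applied to the complete contraction $\be^{-1}\colon\be(R_{0})\to R$; in case (a) one argues instead from the closure of $\mc A$ under $\oplus_\infty$ together with the injectivity of $B$. Finally I would invoke the stable amalgamation property of $\mc A$ (as a class of operator spaces) for $\phi\colon X\to Y$ and $\psi\colon X\to Z$, producing $W\in\mc A$, $i_{0}\in\Emb^{\mtt C}(Y,W)$ and $j_{0}\in\Emb^{\mtt C}(Z,W)$ with $\Vert i_{0}\phi-j_{0}\psi\Vert_{\mathrm{cb}}\le\vpi(\de)+\vep/2$.

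With this data in hand, I would set $V:=W\oplus_\infty B$ — which lies in $\mc A$ in case (a), and in case (b) $\mtt C$-embeds into some $V'\in\mc A$, to which one passes after extending $s_V$ using injectivity of $R$ — put $s_V:=\tilde\be\circ\pi_{B}\colon V\to R$, and define
\[
i\colon Y\to V,\ \ i(y):=(i_{0}y,\be(s_Y y)),\qquad j\colon Z\to V,\ \ j(z):=(j_{0}z,\be(s_Z z)).
\]
Because $\be$ is a complete isometry and $s_Y$ a complete contraction, $\Vert i^{(n)}\eta\Vert=\max\{\Vert\eta\Vert,\Vert s_Y^{(n)}\eta\Vert\}=\Vert\eta\Vert$, so $i$ and likewise $j$ are complete isometries; and $s_V\circ i=\tilde\be\circ\be\circ s_Y=s_Y$ exactly, since $s_Y(Y)\subseteq R_{0}$, so $i\in\Emb^{\mtt C^{R}}(\boldsymbol Y,\boldsymbol V)$ and $j\in\Emb^{\mtt C^{R}}(\boldsymbol Z,\boldsymbol V)$ (and $i,j,s_V$ are unital in the operator system case). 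For $\xi\in M_{n}(X)$, the norm of $(i\phi-j\psi)^{(n)}\xi$ in $M_{n}(V)=M_{n}(W)\oplus_\infty M_{n}(B)$ equals
\[
\max\bigl\{\,\Vert(i_{0}\phi-j_{0}\psi)^{(n)}\xi\Vert,\ \Vert(s_Y\phi-s_Z\psi)^{(n)}\xi\Vert\,\bigr\}\le\max\{\vpi(\de)+\vep/2,\ 2\de\}\,\Vert\xi\Vert,
\]
using $\Vert s_Y\phi-s_X\Vert_{\mathrm{cb}},\Vert s_X-s_Z\psi\Vert_{\mathrm{cb}}\le\de$. Since $\vpi(\de)\ge\de$ we have $2\de\le\vpi(\de)+\de$, whence $\Vert i\phi-j\psi\Vert_{\mathrm{cb}}\le\vpi(\de)+\de+\vep$, the required estimate; joint embedding is obtained the same way. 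Thus $\mc A^{R}$ has the stable amalgamation property with modulus $\vpi(\de)+\de$.

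\emph{Main difficulty.}
The delicate point is the \emph{exact} compatibility $s_V\circ i=s_Y$ and $s_V\circ j=s_Z$ demanded of $\mtt C^{R}$-embeddings: a plain amalgam $W$ of the underlying spaces carries no map to $R$, and since $i_{0}\phi$ and $j_{0}\psi$ only \emph{approximately} coincide, one cannot in general splice $s_Y\circ i_{0}^{-1}$ and $s_Z\circ j_{0}^{-1}$ into a single completely contractive map on $W$. The summand $B$, on which the connecting maps act as the identity, is precisely what transports the value of the distinguished map coherently; the one extra ingredient is the retraction $\tilde\be\colon B\to R$, and I expect reconciling its existence across cases (a) and (b) — injectivity of $R$ versus $\oplus_\infty$-closure of $\mc A$ — to be the main obstacle. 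The operator system case additionally requires routine bookkeeping to keep units and self-adjointness in place throughout.

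\emph{Identifying the Fra\"iss\'e limit.}
By the above, $\mc A^{R}$ is a stable amalgamation class, and it is separable for the Gromov--Hausdorff pseudometric (underlying spaces range over the separable class $\langle\mc A\rangle$, distinguished maps over the separable space of complete contractions into the separable space $R$), hence a stable Fra\"iss\'e class; so $\flim\mc A^{R}=:(G,\Omega)$ exists, with $\Omega\colon G\to R$ a complete contraction and $\mathrm{Age}^{\mtt C}(G)\subseteq\langle\mc A\rangle$, so $G\in[\mc A]$. To show $G=\flim\mc A$ I would verify that $G$ is stably of $\mc A$-disposition and then apply \cite[Proposition 2.12]{lupini_fraisse_2018}. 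Given $X,Y\in\mc A$, $f\in\Emb_{\de}^{\mtt C}(X,G)$ and $\iota\in\Emb_{\de}^{\mtt C}(X,Y)$, the map $f$ is an exact $\mtt C^{R}$-embedding of $(X,\Omega f)$ into $(G,\Omega)$, and the point is to equip $Y$ with a complete contraction $s_Y\colon Y\to R$ making $\iota$ an almost-$\de$-embedding $(X,\Omega f)\to(Y,s_Y)$ in $\mtt C^{R}$. This is where $\mc A$-nuclearity of $R$ enters: writing $\mathrm{Id}_{R}$ as a pointwise limit of $\be_{n}\al_{n}$ with $\al_{n}\colon R\to A_{n}\in\mc A$ and $\be_{n}\colon A_{n}\to R$, and fixing $n$ large, one extends $\al_{n}\Omega f\colon X\to A_{n}$ along $\iota$ to a complete contraction $\sigma\colon Y\to A_{n}$ by injectivity of $A_{n}$, and sets $s_Y:=\be_{n}\sigma$, suitably rescaled. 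Applying the $\mc A^{R}$-disposition of $\flim\mc A^{R}=(G,\Omega)$ to $f$ and $\iota$ then yields $g\in\Emb^{\mtt C^{R}}((Y,s_Y),(G,\Omega))$ with $\Vert g\iota-f\Vert_{\mathrm{cb}}$ controlled, in particular $g\in\Emb^{\mtt C}(Y,G)$; hence $G$ is stably of $\mc A$-disposition, so $G=\flim\mc A$ and $\flim\mc A^{R}=(\flim\mc A,\Omega)$. Finally, when $\mtt C=\mtt{Osy}$ the map $\Omega$ is unital, because $(\flim\mc A,\Omega)$ is the inductive limit of objects $(A_{n},s_{n})\in\mc A^{R}$ with each $s_{n}$ unital, so $\Omega(1)=\lim s_{n}(1_{A_{n}})=1_{R}$.
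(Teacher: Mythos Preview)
Your proposal follows the paper's proof essentially step for step: amalgamate the underlying spaces in $\mathcal{A}$, then adjoin an $\oplus_\infty$-summand carrying an embedded copy of $s_Y(Y)+s_Z(Z)$ so that the second projection recovers the distinguished maps exactly; for the ``moreover'' clause, use $\mathcal{A}$-nuclearity of $R$ together with injectivity of the objects of $\mathcal{A}$ to manufacture a state on $Y$ extending (approximately) $\Omega\circ f$, then invoke the $\mathcal{A}^R$-disposition of the limit. The estimate $\max\{\varpi(\delta)+\varepsilon,\,2\delta\}\le\varpi(\delta)+\delta+\varepsilon$ and the disposition argument via \cite[Proposition~3.5]{lupini_fraisse_2018} are exactly what the paper does.

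One remark on the point you flag as the ``main obstacle'': your case~(b) argument for the retraction $\tilde\beta:B\to R$ is correct, but your case~(a) justification (``closure of $\mathcal{A}$ under $\oplus_\infty$ together with the injectivity of $B$'') does not actually produce $\tilde\beta$ --- injectivity of $B$ lets you extend maps \emph{into} $B$, not out of it. The paper's proof is equally imprecise here: it writes ``$\pi_2:V\oplus_\infty R_0\to R_0\subseteq R$'' where $R_0\in\mathcal{A}$ was obtained by \emph{embedding} a subspace $S\subseteq R$ via some $\theta$, so $R_0$ is not literally a subspace of $R$ and $\pi_2$ is not a map to $R$ without a retraction of $\theta$. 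So you have not introduced a gap that the paper avoids; you have simply been more candid about where the construction is loose. In the applications (Propositions~\ref{Proposition:fraisse-functional} and~\ref{Proposition:fraisse-state}) either $R$ is taken injective or one works with $\langle\mathbb{I}\rangle^R$, where the issue can be handled separately.
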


\begin{proof}
We suppose that (a) holds. The proof when (b) holds is similar. Let $\delta
\geq 0$, $\varepsilon >0$, $\boldsymbol{X}=(X,s_{X})$, $\boldsymbol{Y}%
=(Y,s_{Y})$ and $\boldsymbol{Z}=(Z,s_{Z})$ all in $\mathcal{A}^{R}$, and let 
$\phi \in \Emb_{\delta }^{\mathtt{C}^{R}}(\mathbf{X},\mathbf{Z})$ and $\psi
\in \Emb_{\delta }^{\mathtt{C}^{R}}(\mathbf{X},\mathbf{Y})$. By definition, $%
\phi \in \Emb_{\delta }^{\mathtt{C}}(X,Z)$ and $\psi \in \Emb_{\delta }^{%
\mathtt{C}}(X,Y)$, so it follows from the (SAP) of the class $\mathcal{A}$
with modulus $\varpi $ that there is some $V\in \mathcal{A}$, $i\in \Emb^{%
\mathtt{C}}(Y,V)$ and $j\in \Emb^{\mathtt{C}}(Z,V)$ such that $\Vert i\circ
\phi -j\circ \psi \Vert _{\mathrm{cb}}\leq \varpi (\delta )+\varepsilon $.
Let $R_{0}\in \mathcal{A}$, and let $\theta \in \Emb^{\mathtt{C}}(S,R_{0})$
where $S\in \mathrm{Age}^{\mathtt{C}}(R)$ is generated by $\im s_{Y}+\im %
s_{Z}$. Set, $W:=V\oplus _{\infty }R_{0}\in \mathcal{A}$. We define $\mathbf{%
W}:=(W,\pi _{2})\in \mathcal{A}^{R}$, where $\pi _{2}:V\oplus _{\infty
}R_{0}\rightarrow R_{0}\subseteq R$ is the canonical second-coordinate
projection. Let also $I:Y\rightarrow W$ and $J:Z\rightarrow W$ be defined by 
$I(y):=(i(y),\theta (s_{Y}(y)))$ and $J(z):=(j(z),\theta (s_{Z}(z)))$. It
follows that $I\in \Emb^{\mathtt{C}^{R}}(\mathbf{Y},\mathbf{W})$ and that $%
J\in \Emb^{\mathtt{C}^{R}}(\mathbf{Z},\mathbf{W})$. From definitions, we
have that $\Vert I\circ \phi -J\circ \psi \Vert _{\mathrm{cb}}=\max \{\Vert
i\circ \phi -j\circ \psi \Vert _{\mathrm{cb}},\Vert \theta \circ s_{Y}\circ
\phi -\theta \circ s_{Z}\circ \psi \Vert _{\mathrm{cb}}\}\leq \max \{\varpi
(\delta )+\varepsilon ,\Vert s_{Y}\circ \phi -s_{X}\Vert _{\mathrm{cb}%
}+\Vert s_{Z}\circ \psi -s_{X}\Vert _{\mathrm{cb}}\}\leq \varpi (\delta
)+\delta +\varepsilon $, as we are assuming that $\varpi $ satisfies $\varpi
(\delta )\geq \delta $.

Suppose now that each element of $\mathcal{A}$ is injective, and that $R$ is $\mathcal{A}$-nuclear. {We consider the case of
operator spaces. The case of operator systems is entirely similar. Write $\flim\mathcal{A}^{R}=(\mathbb{M},\Omega )$. We now show that }$%
\mathbb{M}=${$\flim\mathcal{A}$}. To this purpose, 
as we mentioned before, it
suffices to prove that the following property of approximate local {\em disposition}  holds: for every $\varepsilon >0$, $%
\delta \geq 0$, $F\in \mathcal{A}$, tuple $\overline{a}=\left( a_{0},\ldots
,a_{n}\right) $ in $F$, injective completely contractive map $f:\mathrm{%
\mathrm{span}}\left( \overline{a}\right) \rightarrow \mathbb{M}$ such that $%
\left\Vert f^{-1}\right\Vert _{\mathrm{cb}}\leq \delta $, there exists a
completely isometric linear map $g:F\rightarrow \mathbb{M}$ such that $%
\max_{i}\left\Vert g\left( a_{i}\right) -f\left( a_{i}\right) \right\Vert
\leq \varpi \left( \delta \right) +\delta +\varepsilon $.

Consider the completely contractive map $\Omega \circ f:\mathrm{\mathrm{span}%
}\left( \overline{a}\right) \rightarrow R$. We are assuming that $R$ is $%
\mathcal{A}$-nuclear.
 By the equivalence of (1) and (3) in \cite[%
Proposition 3.5]{lupini_fraisse_2018}, there exists a completely contractive
map $s:F\rightarrow R$ such that $\max_{i}\left\Vert s\left( a_{i}\right)
-\left( \Omega \circ f\right) \left( a_{i}\right) \right\Vert \leq
\varepsilon $. By the homogeneity property of {$\flim\mathcal{A}^{R}=(%
\mathbb{M},\Omega )$, there exists a }$\mtt{C}^{R}$-embedding $g:\left( F,s\right)
\rightarrow \left( \mathbb{M},\omega \right) $ such that $\left\Vert g\left(
a_{i}\right) -f\left( a_{i}\right) \right\Vert \leq \varpi \left( \delta
\right) +\delta +\varepsilon $. This concludes the proof.
\end{proof}

\subsection{The approximate Ramsey property}

We now introduce and characterize various version of the Ramsey property,
which are strengthening of the amalgamation property and that, as we will
see, can be used to obtain information about the automorphism group of Fra%
\"{\i}ss\'{e} limits. We still adopt the notation from above. 

\begin{definition}[Approximate and stable Ramsey Property]
\label{Definition:ARP} Let  $\mathtt{C}$ be one of the classes $\mathtt{Osp}$, $\mathtt{Osp}^{R}$, $%
\mathtt{Osy}$ or $\mathtt{Osy}^{R}$, and let  $\mathcal{A}\subseteq 
\mathtt{C}$. 

\begin{enumerate}[(a)]

\item $\mathcal{A}$ satisfies the \emph{approximate Ramsey property (ARP)}
if for every $X,Y\in \mathcal{A}$, $\varepsilon >0$ there exists $Z\in 
\mathcal{A}$ such that any \emph{continuous coloring} of $\mathrm{Emb}^{%
\mathtt{C}}(X,Z)$ \emph{$\varepsilon $-stabilizes} on $\gamma \circ \mathrm{%
Emb}^{\mathtt{C}}(X,Y)$ for some $\gamma \in \mathrm{Emb}^{\mathtt{C}}(Y,Z)$%
; that is, for every 1-Lipschitz mapping $c:\Emb^{\mathtt{C}%
}(X,Z)\rightarrow [0,1]$ there is some $\gamma \in \Emb^{\mathtt{C}%
}(Y,Z)$ such that $\osc(c\upharpoonright (\gamma \circ \Emb^{\mathtt{C}%
}(X,Y)))=\max_{\eta _{0},\eta _{1}\in \Emb^{\mathtt{C}}(X,Y)}|c(\gamma \circ
\eta _{0})-c(\gamma \circ \eta _{1})|\leq \varepsilon $.

\item $\mathcal{A}$ satisfies the \emph{stable Ramsey property (SRP)} with
stability modulus $\varpi $ if for every $X,Y\in \mathcal{A}$, $\varepsilon
>0$, $\delta \geq 0$ there exists $Z\in \mathcal{A}$ such that every
continuous coloring of $\mathrm{\ \mathrm{Emb}}_{\delta }^{\mathtt{C}}(X,Z)$ 
$(\varepsilon +\varpi (\delta ))$-stabilizes on $\gamma \circ \mathrm{Emb}%
_{\delta }^{\mathtt{C}}(X,Y)$ for some $\gamma \in \mathrm{Emb}^{\mathtt{C}%
}(Y,Z)$.

\item $\mathcal{A}$ satisfies the \emph{discrete approximate Ramsey property}%
, or discrete (ARP), if for every $X,Y\in \mathcal{A}$, $r\in \mathbb{N}$
and $\varepsilon >0$ there exists $Z\in \mathcal{A}$ such that every \emph{$r
$-coloring} of $\mathrm{Emb}^{\mathtt{C}}(X,Z)$ has an \emph{$\varepsilon $%
-monochromatic} set of the form $\gamma \circ \mathrm{Emb}^{\mathtt{C}}(X,Y)$
for some $\gamma \in \mathrm{Emb}^{\mathtt{C}}(Y,Z)$; that is, for every
coloring $c:\Emb^{\mathtt{C}}(X,Z)\rightarrow r=\{0,1,\dots ,r-1\}$ there is
some $\gamma \in \Emb^{\mathtt{C}}(Y,Z)$ and some $j\in r$ such that $\gamma
\circ \Emb^{\mathtt{C}}(X,Y)\subseteq (c^{-1}(j))_{\varepsilon }$. The \emph{%
discrete }stable Ramsey property, or discrete (SRP), is defined similarly.

\item The \emph{compact }(ARP) and the \emph{compact }(SRP) of $\mathcal{A}$
are defined as the (ARP) and the (SRP), respectively, by replacing
continuous colorings with compact colorings, i.e. 1-Lipschitz mappings into
compact metric spaces.
\end{enumerate}
\end{definition}

It is not difficult to see that the (SRP) with modulus $\varpi $ of a class
implies the stable amalgamation property of the class with modulus $\varpi $%
. Also, it is worth to point out that the (ARP) as in Definition \ref%
{Definition:ARP} is equivalent to the one considered in \cite[Definition 3.3]%
{melleray_extremely_2014} when $R$-operator spaces or systems are regarded
as structures in the logic for metric structures \cite{ben_yaacov_model_2008}
as in \cite[Appendix B]{goldbring_kirchbergs_2015} or \cite[\S 8.1]%
{lupini_fraisse_2018}. The following proposition provides reformulations of
the (ARP) in terms of discrete or compact colorings, and it is a
generalization of \cite[Proposition 2.7]{bartosova_2019} where the case of
Banach spaces is treated.

\begin{proposition} \label{oijweirjewiorwerew} Let  $\mathtt{C}$ be one of the classes $\mathtt{Osp}$, $\mathtt{Osp}^{R}$, $%
\mathtt{Osy}$ or $\mathtt{Osy}^{R}$. 
\label{ARP=DARP} The following are equivalent for a class $\mathcal{A}%
\subseteq \mathtt{C}$:

\begin{enumerate}[(1)]

\item $\mathcal{A}$ satisfies the (ARP).

\item $\mathcal{A}$ satisfies the \emph{discrete} (ARP).

\item $\mathcal{A}$ satisfies the \emph{compact} (ARP).
\end{enumerate}
\end{proposition}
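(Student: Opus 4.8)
The plan is to establish the cycle $(1)\Rightarrow(3)\Rightarrow(2)\Rightarrow(1)$, which yields all three equivalences (note also that $(3)\Rightarrow(1)$ is anyway immediate, since $[0,1]$ is a compact metric space). The argument will be uniform over the four choices of $\mathtt{C}$: in each case $\Emb_{\delta}^{\mathtt{C}}(X,Y)$ is a metric space for $d_{\mathrm{cb}}$, a $\mathtt{C}$-embedding is a complete isometry so that postcomposition with it is $d_{\mathrm{cb}}$-isometric on embedding spaces, and a composition of $\mathtt{C}$-embeddings is again a $\mathtt{C}$-embedding. This generalizes \cite[Proposition 2.7]{bartosova_2019}, proved there for Banach spaces, and follows the same scheme.

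The engine is an \emph{iteration lemma}: if $\mathcal{A}$ has the (ARP), then for all $X,Y\in\mathcal{A}$, $k\in\mathbb{N}$ and $\varepsilon>0$ there is $Z\in\mathcal{A}$ such that for \emph{every} $k$-tuple $c_{1},\dots,c_{k}$ of $1$-Lipschitz colorings $\Emb^{\mathtt{C}}(X,Z)\to[0,1]$ there is a \emph{single} $\gamma\in\Emb^{\mathtt{C}}(Y,Z)$ with $\osc(c_{i}\upharpoonright(\gamma\circ\Emb^{\mathtt{C}}(X,Y)))\le\varepsilon$ for all $i\le k$. To prove it I build a tower $Y=Z_{0},Z_{1},\dots,Z_{k}=:Z$ in $\mathcal{A}$ in which each $Z_{i}$ witnesses the (ARP) for the pair $(X,Z_{i-1})$ with error $\varepsilon$; then, given $c_{1},\dots,c_{k}$, I process the colorings from $c_{k}$ down to $c_{1}$: at stage $i$ I hold a partial composite $g\in\Emb^{\mathtt{C}}(Z_{i},Z)$ of the embeddings chosen so far, I feed the $1$-Lipschitz coloring $\eta\mapsto c_{i}(g\circ\eta)$ of $\Emb^{\mathtt{C}}(X,Z_{i})$ to the (ARP)-property of $Z_{i}$ to obtain $h_{i}\in\Emb^{\mathtt{C}}(Z_{i-1},Z_{i})$ that stabilizes it on $h_{i}\circ\Emb^{\mathtt{C}}(X,Z_{i-1})$, and I replace $g$ by $g\circ h_{i}$. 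Setting $\gamma:=h_{k}\circ\cdots\circ h_{1}$ and $g_{i}:=h_{k}\circ\cdots\circ h_{i}$, one checks $\gamma\circ\Emb^{\mathtt{C}}(X,Y)\subseteq g_{i}\circ\Emb^{\mathtt{C}}(X,Z_{i-1})$, so each $c_{i}$ has oscillation $\le\varepsilon$ on $\gamma\circ\Emb^{\mathtt{C}}(X,Y)$ because oscillation only decreases under restriction to a smaller set; crucially the errors do not compound, as each $c_{i}$ is controlled by a single application of the (ARP).

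Granting this, $(1)\Rightarrow(3)$: given $1$-Lipschitz $c\colon\Emb^{\mathtt{C}}(X,Z)\to(M,\rho)$ with $M$ compact and $\varepsilon>0$, fix a finite $\varepsilon$-net $m_{1},\dots,m_{k}$ of $M$ and put $c_{i}:=\min\{1,\rho(c(\cdot),m_{i})\}$, which are $1$-Lipschitz colorings into $[0,1]$; the iteration lemma gives $Z$ and $\gamma$ stabilizing all $c_{i}$ on $\gamma\circ\Emb^{\mathtt{C}}(X,Y)$ within $\varepsilon$, and for $\eta_{0},\eta_{1}\in\Emb^{\mathtt{C}}(X,Y)$, choosing $m_{i}$ $\varepsilon$-close to $c(\gamma\circ\eta_{0})$ gives $\rho(c(\gamma\circ\eta_{0}),c(\gamma\circ\eta_{1}))\le 3\varepsilon$, so starting from $\varepsilon/3$ proves (3). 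For $(3)\Rightarrow(2)$: given $X,Y\in\mathcal{A}$, $r\in\mathbb{N}$, $\varepsilon>0$ and an $r$-coloring $c\colon\Emb^{\mathtt{C}}(X,Z)\to r$ (the assertion being trivial if $\Emb^{\mathtt{C}}(X,Y)=\emptyset$, so assume it nonempty), package $c$ into the single $1$-Lipschitz coloring $C\colon\Emb^{\mathtt{C}}(X,Z)\to([0,\varepsilon]^{r},\|\cdot\|_{\infty})$, $C(\eta):=(\min\{\varepsilon,d_{\mathrm{cb}}(\eta,c^{-1}(i))\})_{i<r}$, and apply the compact (ARP) with error $\varepsilon/2$ to get $Z$ and $\gamma$ with $\osc(C\upharpoonright(\gamma\circ\Emb^{\mathtt{C}}(X,Y)))\le\varepsilon/2$; picking $\eta^{*}\in\Emb^{\mathtt{C}}(X,Y)$ and $j:=c(\gamma\circ\eta^{*})$, the $j$-th coordinate of $C(\gamma\circ\eta^{*})$ is $0$, hence that of $C(\gamma\circ\eta)$ is $\le\varepsilon/2$ for all $\eta$, i.e.\ $\gamma\circ\Emb^{\mathtt{C}}(X,Y)\subseteq(c^{-1}(j))_{\varepsilon}$, the required $\varepsilon$-monochromatic set.

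Finally $(2)\Rightarrow(1)$ is the standard discretization: cut $[0,1]$ into $\lceil 2/\varepsilon\rceil$ half-open pieces of length $\le\varepsilon/2$, turn a $1$-Lipschitz $c$ into the induced finite coloring $\bar c$, apply the discrete (ARP) with error $\varepsilon/2$ to obtain $\gamma$ and $j$ with $\gamma\circ\Emb^{\mathtt{C}}(X,Y)\subseteq(\bar c^{-1}(j))_{\varepsilon/2}$, and observe that any two points of $\gamma\circ\Emb^{\mathtt{C}}(X,Y)$ are each within $\varepsilon/2$ (hence, by $1$-Lipschitzness, within $\varepsilon/2$ in value) of points whose $c$-values lie in a common piece, so $c$ oscillates by $\le 3\varepsilon/2$ there; rescale. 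I expect the only genuinely delicate point to be the iteration lemma — specifically the indexing of the iterated compositions and the verification that the total error stays bounded by a single $\varepsilon$ rather than growing with $k$. Everything else reduces to routine manipulations with finite nets, finite partitions of $[0,1]$, and the $1$-Lipschitz "bump" functions $\min\{\varepsilon,d_{\mathrm{cb}}(\cdot,S)\}$.
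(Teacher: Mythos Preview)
Your argument is correct, and every step checks out (including the iteration lemma: since at stage $i$ you stabilize $c_i$ once on $g_i\circ\Emb^{\mathtt C}(X,Z_{i-1})$ and later only pass to subsets, no error accumulates). The route, however, differs from the paper's.

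The paper runs the cycle the other way: it notes $(3)\Rightarrow(1)$ trivially, proves $(1)\Rightarrow(2)$ by \emph{induction on the number of colors}~$r$ (at the inductive step one applies the ARP to the single Lipschitz function $\phi\mapsto\tfrac12 d_{\mathrm{cb}}(\phi,c^{-1}(r))$ to either absorb the last color class or discard it, reducing to $r$ colors), and then proves $(2)\Rightarrow(3)$ by choosing a finite $\varepsilon$-net $D$ in the compact target $K$ and rounding a $K$-valued Lipschitz map to a $|D|$-coloring. Your proof instead establishes an \emph{iteration lemma} (simultaneous stabilization of finitely many $[0,1]$-colorings via a tower $Y=Z_0,\dots,Z_k=Z$), uses it to go $(1)\Rightarrow(3)$ directly through the coordinate functions $\min\{1,\rho(c(\cdot),m_i)\}$ of a net in $K$, passes $(3)\Rightarrow(2)$ via the vector $C(\eta)=(\min\{\varepsilon,d_{\mathrm{cb}}(\eta,c^{-1}(i))\})_{i<r}$ (which the paper also uses, but later, in the proof of Proposition~\ref{Proposition:ARP}), and closes $(2)\Rightarrow(1)$ by partitioning $[0,1]$. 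Both approaches are standard; the paper's color-peeling induction is slightly shorter and avoids the tower construction, while your iteration lemma is a reusable tool that makes the passage to arbitrary compact targets conceptually cleaner.
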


\begin{proof}
The compact (ARP) obviously implies the (ARP). Suppose that $\mathcal{A}$
satisfies the (ARP), and let us prove that $\mathcal{A}$ satisfies the
discrete (ARP). This is done by induction on $r\in \mathbb{N}$. The case $r=1
$ is trivial. Suppose that we have shown that $\mathcal{A}$ satisfies the
discrete (ARP) for $r$-colorings. Consider $X,Y\in \mathcal{A}$ and $%
\varepsilon >0$. Then by the inductive hypothesis, there is $Z_{0}\in 
\mathcal{A}$ such that every $r$-coloring of $\mathrm{\mathrm{Emb}}^{\mathtt{%
C}}\left( X,Z_{0}\right) $ $\varepsilon $-stabilizes on $\gamma \circ 
\mathrm{Emb}^{\mathtt{C}}\left( X,Y\right) $ for some $\gamma \in \mathrm{Emb%
}^{\mathtt{C}}\left( Y,Z_{0}\right) $. Since by the assumption $\mathcal{A}$
satisfies the (ARP), there is $Z\in \mathcal{A}$ such that every continuous
coloring of $\mathrm{Emb}^{\mathtt{C}}\left( X,Z\right) $ $\varepsilon /2$%
-stabilizes on $\gamma \circ \mathrm{Emb}^{\mathtt{C}}\left( X,Z_{0}\right) $
for some $\gamma \in \mathrm{Emb}^{\mathtt{C}}\left( Z_{0},Z\right) $. We
claim that $Z$ witnesses that $\mathcal{A}$ satisfies the discrete (ARP) for 
$\left( r+1\right) $-colorings. Indeed, suppose that $c$ is an $\left(
r+1\right) $-coloring of $\mathrm{Emb}^{\mathtt{C}}\left( X,Z\right) $.
Define $f:\mathrm{Emb}^{\mathtt{C}}\left( X,Z\right) \rightarrow \left[ 0,1%
\right] $ by $f\left( \phi \right) :=\frac{1}{2}d_{\mathrm{cb}}\left( \phi
,c^{-1}\left( r\right) \right) $. This is a continuous coloring, so by the
choice of $Z$ there exists $\gamma \in \mathrm{Emb}^{\mathtt{C}}(Z_{0},Z)$
such that $f$ $\varepsilon /2$-stabilizes on $\gamma \circ \mathrm{Emb}^{%
\mathtt{C}}(X,Z_{0})$. Now, if there is some $\phi \in \mathrm{Emb}^{\mathtt{%
C}}(X,Z_{0})$ such that $c(\gamma \circ \phi )=r$, then $\gamma \circ 
\mathrm{Emb}^{\mathtt{C}}(X,Z_{0})\subseteq (c^{-1}(r))_{\varepsilon }$, so
choosing an arbitrary $\bar{\gamma}\in \mathrm{Emb}^{\mathtt{C}}(Y,Z_{0})$
we obtain that $c$ $\varepsilon $-stabilizes on $\gamma \circ \bar{\gamma}%
\circ \mathrm{Emb}^{\mathtt{C}}(X,Y)$. Otherwise, $(\gamma \circ \mathrm{Emb}%
^{\mathtt{C}}(X,Z_{0}))\cap c^{-1}(r)=\emptyset $, so defining $\bar{c}(\phi
):=c\left( \gamma \circ \phi \right) $ for $\phi \in \mathrm{Emb}^{\mathtt{C}%
}(X,Z_{0})$ gives an $r$-coloring of $\mathrm{Emb}^{\mathtt{C}}(X,Z_{0})$.
By the choice of $Z_{0}$ there exists $\bar{\gamma}\in \mathrm{Emb}^{\mathtt{%
C}}(Y,Z_{0})$ such that $\bar{c}$ $\varepsilon $-stabilizes on $\bar{\gamma}%
\circ \mathrm{Emb}^{\mathtt{C}}(X,Y)$. Therefore $c$ $\varepsilon $%
-stabilizes on $\gamma \circ \bar{\gamma}\circ \mathrm{Emb}^{\mathtt{C}%
}(X,Y) $. This concludes the proof that the (ARP) implies the discrete (ARP).

Finally, the discrete (ARP) implies the compact (ARP). Indeed, given $%
\varepsilon >0$ and a compact metric space $K$, one can find a finite $%
\varepsilon $-dense subset $D\subseteq K$. Thus if $Z\in \mathcal{A}$
witnesses the discrete (ARP) for $X,Y$, $\varepsilon $ and $D$, then given a 
$1$-Lipschitz $f:\mathrm{Emb}^{\mathtt{C}}\rightarrow K$ we can define a
coloring $c:\mathrm{Emb}^{\mathtt{C}}(X,Z)\rightarrow D\subseteq K$ such
that $d_{K}(c(\phi ),f(\phi ))\leq \varepsilon $ for every $\phi \in \mathrm{%
Emb}^{\mathtt{C}}(X,Z)$. In this way, if $c$ $\varepsilon $-stabilizes on $%
\gamma \circ \mathrm{Emb}^{\mathtt{C}}(X,Y)$, then $f$ $3\varepsilon $%
-stabilizes on $\gamma \circ \mathrm{Emb}^{\mathtt{C}}(X,Y)$.
\end{proof}

The following is a useful property of classes with the (SAP). 
It can be easily proved by induction, using the fact that the spaces $%
\mathrm{Emb}_{\delta }(X,Y)$ for finite-dimensional $X,Y\in \mathcal{A}$ are
compact (See also \cite[Claim 2.13.2]{bartosova_2019}, where a similar
result is proved for Banach spaces).

\begin{proposition}
\label{SAP_is_good} Suppose that $\mathcal{A}$ satisfies the stable
amalgamation property with modulus $\varpi $. For every $\mathcal{F}%
\subseteq \mathcal{A}$ finite and every $\varepsilon ,\delta >0$ there exist 
$V\in \mathcal{A}$ and $I_{X}\in \Emb^{\mathtt{C}}(X,V)$ for $X\in \mathcal{F%
}$ such that for every (not necessarily distinct) $X,Y,Z\in \mathcal{F}$ and
every $\gamma \in \Emb_{\delta }^{\mathtt{C}}(X,Y)$ and $\eta \in \Emb%
_{\delta }^{\mathtt{C}}(X,Z)$ there exists $J\in \Emb^{\mathtt{C}}(Z,V)$
such that $\Vert I_{Y}\circ \gamma -J\circ \eta \Vert _{\mathrm{cb}}\leq
\varpi (\delta )+\varepsilon $.

In particular, for every $X, Y\in \mathcal{F}$ one has that $I_Y\circ \Emb%
_\delta^\mathtt{C}(X,Y) \subseteq (\Emb^\mathtt{C}(X,V))_{\varpi(\delta)+%
\varepsilon}$.

\end{proposition}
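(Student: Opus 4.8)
The plan is to reduce the statement to finitely many instances of the (SAP) by a compactness argument, and then to fuse those instances together by iterated amalgamation.

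First I would fix $W_{0}\in \mathcal{A}$ together with $\mathtt{C}$-embeddings $I_{X}^{(0)}\in \Emb^{\mathtt{C}}(X,W_{0})$ for all $X\in \mathcal{F}$; such a $W_{0}$ exists by iterating the joint embedding property of $\mathcal{A}$ (which, in all the settings of interest, follows from the (SAP), since $\mathcal{A}$ then contains an object embeddable into every member). Since $\mathcal{F}$ is finite and each $\Emb_{\delta }^{\mathtt{C}}(X,Y)$, for $X,Y\in \mathcal{F}$, is a compact metric space, I would then fix finite $(\varepsilon /4)$-dense subsets $D_{X,Y}\subseteq \Emb_{\delta }^{\mathtt{C}}(X,Y)$. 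Enumerating all triples $(X,Y,Z)\in \mathcal{F}^{3}$ together with all pairs $(\gamma ,\eta )\in D_{X,Y}\times D_{X,Z}$ gives a finite list of \emph{amalgamation tasks} $(X_{\ell },Y_{\ell },Z_{\ell },\gamma _{\ell },\eta _{\ell })$, $\ell =1,\dots ,N$.

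The heart of the argument is an induction on $\ell $ producing $W_{\ell }\in \mathcal{A}$, a $\mathtt{C}$-embedding $u_{\ell }\in \Emb^{\mathtt{C}}(W_{\ell -1},W_{\ell })$, and updated embeddings $I_{X}^{(\ell )}:=u_{\ell }\circ I_{X}^{(\ell -1)}$ for $X\in \mathcal{F}$, such that for every $k\le \ell $ there is $J_{k}\in \Emb^{\mathtt{C}}(Z_{k},W_{\ell })$ with $\Vert I_{Y_{k}}^{(\ell )}\circ \gamma _{k}-J_{k}\circ \eta _{k}\Vert _{\mathrm{cb}}\le \varpi (\delta )+\varepsilon /2$. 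For the inductive step I would note that $I_{Y_{\ell }}^{(\ell -1)}\circ \gamma _{\ell }$ is again a $\delta $-embedding of $X_{\ell }$ into $W_{\ell -1}$ — composing a $\delta $-embedding with a genuine $\mathtt{C}$-embedding preserves the defining inequalities $\Vert (\,\cdot\,)^{-1}\Vert _{\mathrm{cb}}\le 1+\delta $ and, in the $R$-variants, $\Vert s\circ (\,\cdot\,)-s\Vert _{\mathrm{cb}}\le \delta $ — so applying the (SAP) to the pair $I_{Y_{\ell }}^{(\ell -1)}\circ \gamma _{\ell }$ and $\eta _{\ell }$ with error $\varepsilon /2$ produces $W_{\ell }\in \mathcal{A}$, $u_{\ell }\in \Emb^{\mathtt{C}}(W_{\ell -1},W_{\ell })$ and $j\in \Emb^{\mathtt{C}}(Z_{\ell },W_{\ell })$ with $\Vert u_{\ell }\circ I_{Y_{\ell }}^{(\ell -1)}\circ \gamma _{\ell }-j\circ \eta _{\ell }\Vert _{\mathrm{cb}}\le \varpi (\delta )+\varepsilon /2$; I would set $J_{\ell }:=j$. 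The tasks $k<\ell $ are kept alive because $u_{\ell }$ is completely contractive, so $u_{\ell }\circ J_{k}$ still works in $W_{\ell }$. Putting $V:=W_{N}$ and $I_{X}:=I_{X}^{(N)}$ then settles the conclusion for all $\gamma ,\eta $ appearing in the tasks.

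Finally I would remove the restriction to net points by a perturbation: given arbitrary $X,Y,Z\in \mathcal{F}$ and $\gamma \in \Emb_{\delta }^{\mathtt{C}}(X,Y)$, $\eta \in \Emb_{\delta }^{\mathtt{C}}(X,Z)$, choose $\gamma '\in D_{X,Y}$ and $\eta '\in D_{X,Z}$ within $\varepsilon /4$ of $\gamma $ and $\eta $, let $J$ be the map obtained above for the task $(X,Y,Z,\gamma ',\eta ')$, and estimate $\Vert I_{Y}\circ \gamma -J\circ \eta \Vert _{\mathrm{cb}}\le \Vert \gamma -\gamma '\Vert _{\mathrm{cb}}+\Vert I_{Y}\circ \gamma '-J\circ \eta '\Vert _{\mathrm{cb}}+\Vert \eta '-\eta \Vert _{\mathrm{cb}}\le \varpi (\delta )+\varepsilon $, using that $I_{Y}$ and $J$ are complete contractions. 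The final ``in particular'' assertion is the special case $Z:=X$, $\eta :=\mathrm{Id}_{X}$. The one place where I expect care to be needed is the bookkeeping: verifying that composing $\delta $-embeddings with $\mathtt{C}$-embeddings stays inside $\Emb_{\delta }^{\mathtt{C}}$ (in the $R$-operator space and $R$-operator system cases this uses that the connecting maps are compatible with the distinguished morphisms to $R$), and keeping the $(\varepsilon /4,\varepsilon /2,\varepsilon /4)$ error budget consistent through both the perturbation and the $N$-step iteration; the rest is routine.
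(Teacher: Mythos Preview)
Your proposal is correct and matches precisely the approach the paper indicates: the paper does not actually write out a proof of this proposition but states that ``it can be easily proved by induction, using the fact that the spaces $\mathrm{Emb}_{\delta}(X,Y)$ for finite-dimensional $X,Y\in \mathcal{A}$ are compact,'' referring to \cite[Claim~2.13.2]{bartosova_2019} for the Banach-space case. Your compactness-then-iterated-SAP argument is exactly the intended one, and your observation that the initial common target $W_{0}$ requires the joint embedding property (not literally just the SAP as stated) is a fair point---in every use of the proposition the class in question is in fact a stable amalgamation class, so JEP is available.
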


The next proposition generalizes \cite[Proposition 2.13]{bartosova_2019}.

\begin{proposition}
\label{SRP=cSRP} Suppose that $\mathcal{A}\subseteq \mathtt{C}$ satisfies
the stable amalgamation property with modulus $\varpi $. Then the following
assertions are equivalent:

\begin{enumerate}[(1)]

\item $\mathcal{A}$ satisfies the (ARP).

\item $\mathcal{A}$ satisfies the (SRP) with modulus $\varpi $.

\item $\mathcal{A}$ satisfies the discrete (SRP) with modulus $\varpi $.

\item $\mathcal{A}$ satisfies the compact (SRP) with modulus $\varpi $.
\end{enumerate}

Suppose that $\mc A\con \mc B\con \langle \mathcal{A } \rangle$ also satisfies the stable
amalgamation property with modulus $\varpi$. Then \textit{(1)--(4)} are
equivalent to any of the following conditions.

\begin{enumerate}[(1)]\addtocounter{enumi}{4}

\item $\mc B $ satisfies the (ARP). 

\item For every $X,Y\in \mathcal{A}$, every $\varepsilon>0$ and every $r\in 
\mathbb{N}$ there is $Z\in \mc B$ such that every $r$%
-coloring of $\Emb^\mathtt{C}(X,Z)$ has an $\varepsilon$-monochromatic set
of the form $\gamma \circ \Emb^\mathtt{C}(X,Y)$ for some $\gamma \in \Emb^%
\mathtt{C}(Y,Z)$.
\end{enumerate}
\end{proposition}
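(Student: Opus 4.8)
The plan is to settle the equivalences among (1)--(4) first and then deduce the ``moreover'' part by transfer arguments. The implication (2)$\Rightarrow$(1) is immediate, the (ARP) being literally the (SRP) specialized to $\delta=0$ together with $\varpi(0)=0$. The equivalences (2)$\Leftrightarrow$(3)$\Leftrightarrow$(4) are obtained by running, essentially verbatim, the induction on the number of colors and the finite $\varepsilon$-net argument from the proof of Proposition~\ref{oijweirjewiorwerew}, now with $\delta$-embeddings in place of embeddings and with the term $\varpi(\delta)$ carried along as an additive constant in every estimate; here one uses, as there, that $\Emb^{\mathtt C}_\delta(X,Z)$ is a compact metric space when $X,Z$ are finite-dimensional. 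So the real content of the first half is the implication (1)$\Rightarrow$(2).

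For (1)$\Rightarrow$(2), fix $X,Y\in\mathcal A$, $\varepsilon>0$ and $\delta\ge 0$; we may assume $\Emb^{\mathtt C}_\delta(X,Y)\neq\emptyset$. The idea is to use the stable amalgamation property, in the packaged form of Proposition~\ref{SAP_is_good}, to ``align'' any two $\delta$-embeddings $X\to Y$ up to $\varpi(\delta)$ after a single common post-composition, and to route the coloring through the (ARP) for exact embeddings using a function-space-valued coloring. Concretely: apply Proposition~\ref{SAP_is_good} to $\mathcal F=\{X,Y\}$ and a small auxiliary $\varepsilon'>0$ to obtain $V\in\mathcal A$ with embeddings $I_X\in\Emb^{\mathtt C}(X,V)$, $I_Y\in\Emb^{\mathtt C}(Y,V)$ such that for all $\theta,\theta'\in\Emb^{\mathtt C}_\delta(X,Y)$ there is an embedding $J_{\theta,\theta'}\in\Emb^{\mathtt C}(Y,V)$ with $\Vert I_Y\circ\theta-J_{\theta,\theta'}\circ\theta'\Vert_{\mathrm{cb}}\le\varpi(\delta)+\varepsilon'$ --- crucially $I_Y$ is fixed and only $J_{\theta,\theta'}$ varies. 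Next apply the compact (ARP) for $\mathcal A$ --- available by Proposition~\ref{oijweirjewiorwerew} --- to the pair $(Y,V)$ and $\varepsilon'$, getting $Z\in\mathcal A$ such that every compact coloring of $\Emb^{\mathtt C}(Y,Z)$ $\varepsilon'$-stabilizes on $\zeta\circ\Emb^{\mathtt C}(Y,V)$ for some $\zeta\in\Emb^{\mathtt C}(V,Z)$. Then $Z$ should witness the (SRP) for $X,Y,\varepsilon,\delta$: given a $1$-Lipschitz $c\colon\Emb^{\mathtt C}_\delta(X,Z)\to[0,1]$, consider the coloring $\Psi\colon\Emb^{\mathtt C}(Y,Z)\to K$, $\Psi(\rho):=\bigl(\phi\mapsto c(\rho\circ\phi)\bigr)$, where $K$ is the space of $1$-Lipschitz functions $\Emb^{\mathtt C}_\delta(X,Y)\to[0,1]$ with the supremum metric, which is compact by the Arzel\`a--Ascoli theorem (recall $\Emb^{\mathtt C}_\delta(X,Y)$ is compact); one checks at once that $\Psi$ is $1$-Lipschitz. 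Let $\zeta$ stabilize $\Psi$ as above and set $\gamma:=\zeta\circ I_Y\in\Emb^{\mathtt C}(Y,Z)$. For $\theta,\theta'\in\Emb^{\mathtt C}_\delta(X,Y)$, writing $J:=J_{\theta,\theta'}$, the element $\zeta\circ J\circ\theta'$ lies within $\varpi(\delta)+\varepsilon'$ of $\gamma\circ\theta=\zeta\circ I_Y\circ\theta$ (apply the complete isometry $\zeta$ to the displayed estimate, then use that $c$ is $1$-Lipschitz), while $c(\zeta\circ J\circ\theta')$ and $c(\gamma\circ\theta')=c(\zeta\circ I_Y\circ\theta')$ differ by at most $\varepsilon'$ since $\Psi(\zeta\circ J)$ and $\Psi(\zeta\circ I_Y)$ are within $\varepsilon'$ in $K$ (both $I_Y,J\in\Emb^{\mathtt C}(Y,V)$). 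Hence $\osc\bigl(c\upharpoonright(\gamma\circ\Emb^{\mathtt C}_\delta(X,Y))\bigr)\le\varpi(\delta)+2\varepsilon'\le\varpi(\delta)+\varepsilon$ for $\varepsilon'=\varepsilon/2$.

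For the ``moreover'' part, assume $\mathcal A\subseteq\mathcal B\subseteq\langle\mathcal A\rangle$ with $\mathcal B$ also satisfying the (SAP) with modulus $\varpi$. The implication (5)$\Rightarrow$(6) is cheap: by Proposition~\ref{oijweirjewiorwerew} applied to $\mathcal B$, (5) yields the discrete (ARP) for $\mathcal B$, and restricting to pairs $X,Y\in\mathcal A\subseteq\mathcal B$ gives (6) with the output $Z$ automatically in $\mathcal B$. It remains to link (5)--(6) with (1)--(4), which I would do by perturbation along $\mathcal A\subseteq\mathcal B\subseteq\langle\mathcal A\rangle$. For (1)$\Rightarrow$(5): given $X,Y\in\mathcal B$ and $\varepsilon>0$, use $\mathcal B\subseteq\langle\mathcal A\rangle$ to find, for a small enough $\delta>0$, members $X^{*},Y^{*}\in\mathcal A$ and $\delta$-embeddings linking them to $X,Y$; apply the (SRP) of $\mathcal A$ (from (1)$\Leftrightarrow$(2)) to $X^{*},Y^{*}$ with this $\delta$, so that $\varpi(\delta)$ is absorbed into $\varepsilon$; and transport the stabilization back to $X,Y$, the output space lying in $\mathcal A\subseteq\mathcal B$. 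For (6)$\Rightarrow$(1): given $X,Y\in\mathcal A$, $r$ and $\varepsilon$, take $Z\in\mathcal B$ from (6); as $Z\in\langle\mathcal A\rangle$ it $\delta$-embeds into some $Z^{*}\in\mathcal A$ for small $\delta$, and using the (SAP) of $\mathcal A$ to straighten the relevant $\delta$-embeddings inside a still larger member of $\mathcal A$ one transfers the $r$-coloring property from $Z$ to a member of $\mathcal A$; this is the discrete (ARP) for $\mathcal A$, hence (1) by Proposition~\ref{oijweirjewiorwerew}.

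The point where one must be clever rather than routine is the passage between $\delta$-embeddings and exact embeddings: the only available device is the (SAP), which costs $\varpi(\delta)$ and is one-directional (it can only straighten a $\delta$-embedding after a post-composition into a larger space). The naive route --- restrict $c$ to exact embeddings and perturb each of $\theta,\theta'$ to an exact embedding separately --- straightens two maps and charges $\varpi(\delta)$ twice, producing only the weaker modulus $2\varpi$; the gain in the argument above is that, by coloring the exact embeddings $Y\to Z$ by the whole family of values $c(\rho\circ\phi)$ and amalgamating the \emph{pair} $\theta,\theta'$ over $V$, the Ramsey stabilization itself aligns $\theta'$ with $\theta$ and only a single application of the (SAP) is incurred. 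In the ``moreover'' part the analogous subtlety is that the Gromov--Hausdorff approximations witnessing membership in $\langle\mathcal A\rangle$ are likewise one-directional, so one must compose them with, and straighten them through, the (SAP) while keeping the accumulated error below the prescribed $\varepsilon$.
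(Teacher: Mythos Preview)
Your argument for (1)--(4) is correct and is essentially the paper's proof: the paper also derives (ARP)$\Rightarrow$compact (SRP) by applying Proposition~\ref{SAP_is_good} to obtain a space $V$ (the paper's $Y_0$) and then the compact (ARP) for the pair $(Y,V)$ with target the compact function space $\mathrm{Lip}(\Emb^{\mathtt C}_\delta(X,Y),K)$; your bookkeeping even produces the slightly better bound $\varpi(\delta)+2\varepsilon'$ in place of the paper's $\varpi(\delta)+3\varepsilon$.

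In the ``moreover'' part your (5)$\Rightarrow$(6) and (6)$\Rightarrow$(1) match the paper. The gap is in (1)$\Rightarrow$(5) (the paper routes this as (1)$\Rightarrow$(6)$\Rightarrow$(5), with all the work in (6)$\Rightarrow$(5)). Two concrete points are missing. First, $X^*,Y^*\in\mathcal A$ cannot be chosen independently: one needs that every $\eta\in\Emb^{\mathtt C}(X,Y)$ lifts, up to $\varpi(\delta)+\varepsilon$, to some $\hat\eta\in\Emb^{\mathtt C}(X^*,Y^*)$ satisfying $\Vert t_Y\circ\eta-\hat\eta\circ t_X\Vert_{\mathrm{cb}}\le\varpi(\delta)+\varepsilon$. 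This is the content of the paper's Claim~\ref{ijji4r4455}, whose proof is not a single (SAP) step but a two-stage amalgamation using Proposition~\ref{SAP_is_good} for \emph{both} $\mathcal B$ and $\mathcal A$; in particular $Y^*$ must be built after $X^*$ and $t_X$ are fixed. Second, your assertion that ``the output space lies in $\mathcal A\subseteq\mathcal B$'' does not work as written: the Ramsey witness $Z^*\in\mathcal A$ for $X^*,Y^*$ need not admit any exact embedding of $Y$ (only $\Emb^{\mathtt C}_\delta(Y,Z^*)\neq\emptyset$ via $t_Y$), so one cannot even name a $\gamma\in\Emb^{\mathtt C}(Y,Z^*)$ on which to test the stabilization, and similarly a coloring of $\Emb^{\mathtt C}(X,Z^*)$ does not pull back straightforwardly to one of $\Emb^{\mathtt C}(X^*,Z^*)$. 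The paper resolves this by a further application of Proposition~\ref{SAP_is_good}, this time for the class $\mathcal B$, to the family $\{X,Y,X^*,Y^*,Z^*\}$, producing a larger $Z\in\mathcal B$ in which all the relevant $\delta$-embeddings can be straightened; the final output therefore lies in $\mathcal B$, not in $\mathcal A$. Your closing paragraph correctly identifies the one-directionality issue, but the Claim and the extra enlargement step are precisely what convert the sketch into a proof.
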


\begin{proof}
A simple modification of the proof of the Proposition \ref{ARP=DARP} gives
that the compact (SRP) with modulus $\varpi $ implies the (SRP) with modulus 
$\varpi $, which in turn implies the discrete (SRP) with modulus $\varpi $.
Trivially, the discrete (SRP) with modulus $\varpi $ implies the discrete
(ARP), and this one is equivalent to the (ARP) by Proposition \ref{ARP=DARP}%
. We will now show that the (ARP) implies the compact (SRP) with modulus $%
\varpi $. Suppose that $\mathcal{A}$ satisfies the (ARP). Fix $X,Y\in 
\mathcal{A}$, $\delta ,\varepsilon >0$ and a compact metric space $K$. We
use Proposition \ref{SAP_is_good} to find $Y_{0}\in \mathcal{A}$ such that
for every $\phi ,\psi \in \mathrm{Emb}_{\delta }^{\mathtt{C}}(X,Y)$ there
are $i,j\in \mathrm{Emb}^{\mathtt{C}}(Y,Y_{0})$ such that $\Vert i\circ \phi
-j\circ \psi \Vert _{\mathrm{cb}}\leq \varpi (\delta )+\varepsilon $. We
will consider the space $\mathrm{Lip}(\mathrm{Emb}_{\delta }^{\mathtt{C}%
}(X,Y),K)$ of $1$-Lipschitz maps from $\mathrm{Emb}_{\delta }^{\mathtt{C}%
}(X,Y)$ to $K$ as a compact metric space, endowed with the metric $d\left(
f,g\right) =\sup \left\{ d_{K}\left( f\left( \phi \right) ,g\left( \phi
\right) \right) :\phi \in \mathrm{Emb}_{\delta }^{\mathtt{C}}(X,Y)\right\} $%
. By Proposition \ref{ARP=DARP}, $\mathcal{A}$ satisfies the compact (ARP),
so we apply it to $Y,Y_{0}\in \mathcal{A}$ and the compact space $\mathrm{Lip%
}(\mathrm{Emb}_{\delta }^{\mathtt{C}}(X,Y),K)$, and obtain some $Z\in 
\mathcal{A}$ such that every $1$-Lipschitz coloring $c:\mathrm{Emb}^{\mathtt{%
C}}(Y,Z)\rightarrow \mathrm{Lip}(\mathrm{Emb}_{\delta }^{\mathtt{C}}(X,Y),K)$
$\varepsilon $-stabilizes on $\gamma \circ \mathrm{Emb}^{\mathtt{C}}(Y,Y_{0})
$ for some $\gamma \in \mathrm{Emb}^{\mathtt{C}}(Y_{0},Z)$. We claim that $Z$
works, so let $c:\mathrm{Emb}_{\delta }^{\mathtt{C}}(X,Z)\rightarrow K$ be $1
$-Lipschitz. We can then define the 1-Lipschitz mapping $\widehat{c}:\mathrm{%
Emb}^{\mathtt{C}}(Y,Z)\rightarrow \mathrm{Lip}(\mathrm{Emb}_{\delta }^{%
\mathtt{C}}(X,Y),K)$ by setting, for $\gamma \in \mathrm{Emb}^{\mathtt{C}%
}(Y,Z)$, $\widehat{c}(\gamma ):\mathrm{Emb}_{\delta }^{\mathtt{C}%
}(X,Y)\rightarrow K$, $\phi \mapsto c\left( \gamma \circ \phi \right) $. By
the choice of $Z$, there exists $\bar{\gamma}\in \mathrm{Emb}^{\mathtt{C}%
}(Y_{0},Z)$ be such that $\widehat{c}$ $\varepsilon $-stabilizes on $\bar{%
\gamma}\circ \mathrm{Emb}^{\mathtt{C}}(Y,Y_{0})$. Choose an arbitrary $%
\varrho \in \mathrm{Emb}^{\mathtt{C}}(Y,Y_{0})$. We claim that $c$ $\left(
3\varepsilon +\varpi \left( \delta \right) \right) $-stabilizes on $\bar{%
\gamma}\circ \varrho \circ \mathrm{Emb}_{\delta }^{\mathtt{C}}(X,Y)$. Let $%
\phi ,\psi \in \mathrm{Emb}_{\delta }^{\mathtt{C}}(X,Y)$. By the choice of $%
Y_{0}$ there are $i,j\in \mathrm{Emb}^{\mathtt{C}}(Y,Y_{0})$ such that $d_{%
\mathrm{cb}}\left( i\circ \phi ,j\circ \psi \right) \leq \varepsilon +\varpi
(\delta )$. Since $d(\widehat{c}(\bar{\gamma}\circ \varrho ),\widehat{c}(%
\bar{\gamma}\circ i))\leq \varepsilon $ and $d(\widehat{c}(\bar{\gamma}\circ
\varrho ),\widehat{c}(\bar{\gamma}\circ j))\leq \varepsilon $, it follows
that $d_{K}(c(\bar{\gamma}\circ \varrho \circ \phi ),c(\bar{\gamma}\circ
i\circ \phi ))\leq \varepsilon $, $d_{K}(c(\bar{\gamma}\circ \varrho \circ
\psi ),c(\bar{\gamma}\circ j\circ \psi ))\leq \varepsilon $. Furthermore,
from $d_{\mathrm{cb}}\left( i\circ \phi ,j\circ \psi \right) \leq
\varepsilon +\varpi (\delta )$ and the fact that $c$ is $1$-Lipschitz we
deduce that $d_{K}(c(\bar{\gamma}\circ \varrho \circ \phi ),c(\bar{\gamma}%
\circ \varrho \circ \psi ))\leq 3\varepsilon +\varpi (\delta )$.

Suppose now that $\mc A\con \mc B\con \langle \mathcal{A}\rangle $ also satisfies the stable
amalgamation property with the same modulus $\varpi $. Obviously \textit{(1)}
implies \textit{(6)}. Let us prove that \textit{(6)} implies \textit{(5)}: We
prove that, assuming that {\it(6)} holds, the class $  \mathcal{B} $ satisfies
the discrete (ARP). Fix $X,Y\in   \mathcal{B}$.

\begin{claim}
\label{ijji4r4455} Let $\varepsilon,\delta>0$ and let $X_0\in \mathcal{A}$
and $t_X\in \Emb_\delta^\mathtt{C}(X,X_0)$. There are $Y_0\in \mathcal{A}$
and $t_Y\in \Emb_{\delta}^\mathtt{C}(Y,Y_0)$ such that for every $\eta\in %
\Emb^\mathtt{C}(X,Y)$ there is some $\gamma\in \Emb^\mathtt{C}(X_0,Y_0)$
such that $\|t_Y \circ \eta - \gamma \circ t_X\|_\mathrm{cb}\le
\varpi(\delta)+\varepsilon$.
\end{claim}

\begin{proof}[Proof of Claim:]
Let $0<\delta _{0}<\delta $ be such that $\varpi (\delta _{0})\leq \varpi
(\delta )/3$ and set $\varepsilon _{0}=\varepsilon /3$. Let $Z_{0}\in 
\mathcal{A}$ and $t_{0}\in \Emb_{\delta _{0}}^{\mathtt{C}}(Y,Z_{0})$. Apply
Proposition \ref{SAP_is_good} in the case of the class $\mathcal{B}$ to $\mathcal{F}:=\{X,X_{0},Z_{0}\}$, $\delta _{0}$ and $%
\varepsilon _{0}$ to obtain $Z_{1}\in  \mathcal{B}$ and $%
J_{0}\in \Emb^{\mathtt{C}}(Z_{0},Z_{1})$. Fix $Z_{2}\in \mathcal{A}$ and $%
t_{1}\in \Emb_{\delta _{0}}^{\mathtt{C}}(Z_{1},Z_{2})$. Apply again
Proposition \ref{SAP_is_good}, now in the case of the class $\mathcal{A}$,
for $\mathcal{F}:=\{X_{0},Z_{0},Z_{2}\}$, $\delta _{0}$ and $\varepsilon _{0}
$ to find the $Y_{0}\in \mathcal{A}$, and $J_{1}\in \Emb^{\mathtt{C}%
}(Z_{2},Y_{0})$. Let $I\in \Emb^{\mathtt{C}}(Z_{0},Y_{0})$ be such that 
\begin{equation}
\Vert J_{1}\circ t_{1}\circ J_{0}-I\Vert _{\mathrm{cb}}\leq {\varpi (\delta
_{0})+\varepsilon _{0}}.  \label{oij43iot3465656330}
\end{equation}%
We claim that $Y_{0}$ and $t_{Y}:=I\circ t_{0}\in \Emb_{\delta _{0}}^{%
\mathtt{C}}(Y,Y_{0})\subseteq \Emb_{\delta }^{\mathtt{C}}(Y,Y_{0})$ work.
Suppose that $\eta \in \Emb^{\mathtt{C}}(X,Y)$. By the choice of $Z_{1}$
there is some $\widehat{\eta }\in \Emb^{\mathtt{C}}(X_{0},Z_{1})$ such that 
\begin{equation}
\Vert J_{0}\circ t_{0}\circ \eta -\widehat{\eta }\circ t_{X}\Vert _{\mathrm{%
cb}}\leq \varpi (\delta _{0})+\varepsilon _{0}.  \label{oij43iot346565633}
\end{equation}%
Now by the choice of $Y_{0}$, there is some $\gamma \in \Emb^{\mathtt{C}%
}(X_{0},Y_{0})$ such that 
\begin{equation}
\Vert J_{1}\circ t_{1}\circ \widehat{\eta }-\gamma \Vert _{\mathrm{cb}}\leq
\varpi (\delta _{0})+\varepsilon _{0}.  \label{oij43iot3465656331}
\end{equation}%
It follows from the inequalities in \eqref{oij43iot3465656330}, \eqref{oij43iot346565633} and %
\eqref{oij43iot3465656331}, 
\begin{align*}
\Vert I\circ t_{0}\circ \eta -J_{1}\circ t_{1}\circ J_{0}\circ t_{0}\circ
\eta \Vert _{\mathrm{cb}}\leq & (\varpi (\delta _{0})+\varepsilon _{0})\Vert
t_{0}\circ \eta \Vert _{\mathrm{cb}}\leq (\varpi (\delta _{0})+\varepsilon
_{0}), \\
\Vert J_{1}\circ t_{1}\circ J_{0}\circ t_{0}\circ \eta -J_{1}\circ
t_{1}\circ \widehat{\eta }\circ t_{X}\Vert _{\mathrm{cb}}\leq & \Vert
J_{1}\circ t_{1}\Vert _{\mathrm{cb}}(\varpi (\delta _{0})+\varepsilon
_{0})\leq (\varpi (\delta _{0})+\varepsilon _{0}), \\
\Vert J_{1}\circ t_{1}\circ \widehat{\eta }\circ t_{X}-\gamma \circ
t_{X}\Vert _{\mathrm{cb}}\leq & (\varpi (\delta _{0})+\varepsilon _{0})\Vert
t_{X}\Vert _{\mathrm{cb}}\leq (\varpi (\delta _{0})+\varepsilon _{0}).
\end{align*}%
Consequently, 
\begin{equation*}
\Vert t_{Y}\circ \eta -\gamma \circ t_{X}\Vert _{\mathrm{cb}}=\Vert I\circ
t_{0}\circ \eta -\gamma \circ t_{X}\Vert _{\mathrm{cb}}\leq 3\varpi (\delta
_{0})+3\varepsilon _{0}\leq \varpi (\delta )+\varepsilon .\qedhere
\end{equation*}%
\end{proof}
Fix now $\varepsilon >0$, and let $\delta >0$ be such that $\varpi (\delta
)\leq \varepsilon /15$ and set $\varepsilon _{0}:=\varepsilon /5$. Fix also $%
X_{0}\in \mathcal{A}$ and $t_{X}\in \Emb_{\delta }^{\mathtt{C}}(X,X_{0})$.
We use the Claim \ref{ijji4r4455} to find $Y_{0}\in \mathcal{A}$ and $%
t_{Y}\in \Emb_{\delta }^{\mathtt{C}}(Y,Y_{0})$. We apply \textit{(6)} to $%
X_{0},Y_{0}$, $\varepsilon _{0}$ and $r$ to find the corresponding $Z_{0}\in
 \mathcal{B} $. Since $ \mathcal{B} $ satisfies
the stable amalgamation property with modulus $\varpi $, we can apply
Proposition \ref{SAP_is_good} to $\mathcal{F}:=\{X_{0},Y_{0},X,Y,Z_{0}\}$, $%
\delta $ and $\varepsilon _{0}$ and find $Z\in  \mathcal{B}$
and for each $W\in \mathcal{F}$ some $J_{W}\in \Emb^{\mathtt{C}}(W,Z)$ such
that for every (not necessarily distinct) $W_{0},W_{1},W_{2}\in \mathcal{F}$
and every $\gamma \in \Emb_{\delta }^{\mathtt{C}}(W_{0},W_{1})$ and $\eta
\in \Emb_{\delta }^{\mathtt{C}}(W_{0},W_{2})$ there is $J\in \Emb^{\mathtt{C}%
}(W_{2},Z)$ such that $\Vert I_{W_{1}}\circ \gamma -J\circ \eta \Vert _{%
\mathrm{cb}}\leq \varpi (\delta )+\varepsilon _{0}$. We claim that $Z$
works. For suppose that $c:\Emb^{\mathtt{C}}(X,Z)\rightarrow r$, and we
induce a coloring $\widehat{c}:\Emb^{\mathtt{C}}(X_{0},Z_{0})\rightarrow r$
as follows. By applying the defining property of $Z$ to the triple $%
(X,Z_{0},X)$, we obtain that $I_{Z_{0}}\circ \Emb_{\delta }^{\mathtt{C}%
}(X,Z_{0})\subseteq (\Emb^{\mathtt{C}}(X,Z))_{\varpi (\delta )+\varepsilon
_{0}}$, so, for each $\gamma \in \Emb^{\mathtt{C}}(X_{0},Z_{0})$ we can find 
$\widehat{\gamma }\in \Emb^{\mathtt{C}}(X,Z)$ such that $\Vert
I_{Z_{0}}\circ \gamma \circ t_{X}-\widehat{\gamma }\Vert _{\mathrm{cb}}\leq
\varpi (\delta )+\varepsilon _{0}$, and declare that $\widehat{c}(\gamma
):=c(\widehat{\gamma })$. Let $J\in \Emb^{\mathtt{C}}(Y_{0},Z_{0})$ and $j<r$
be such that $J\circ \Emb^{\mathtt{C}}(X_{0},Y_{0})\subseteq (\widehat{c}%
^{-1}(j))_{\varepsilon _{0}}$. Let $I\in \Emb^{\mathtt{C}}(Y,Z)$ be such
that 
\begin{equation}
\Vert I-I_{Z_{0}}\circ J\circ t_{Y}\Vert _{\mathrm{cb}}\leq \varpi (\delta
)+\varepsilon _{0}.  \label{lij34irfrewrtyt}
\end{equation}%
Let us see that $I\circ \Emb^{\mathtt{C}}(X,Y)\subseteq
(c^{-1}(j))_{\varepsilon }$. Fix $\eta \in \Emb^{\mathtt{C}}(X,Y)$. Let $%
\widehat{\eta }\in \Emb^{\mathtt{C}}(X_{0},Y_{0})$ be such that 
\begin{equation}
\Vert \widehat{\eta }\circ t_{X}-t_{Y}\circ \eta \Vert _{\mathrm{cb}}\leq
\varpi (\delta )+\varepsilon _{0}.  \label{lij34irfrewrtyt1}
\end{equation}%
Let $\gamma \in \Emb^{\mathtt{C}}(X_{0},Z_{0})$ be such that 
\begin{equation}
\text{$\Vert \gamma -J\circ \widehat{\eta }\Vert _{\mathrm{cb}}\leq
\varepsilon _{0}$ and $\widehat{c}(\gamma )=j$, }  \label{lij34irfrewrtyt2}
\end{equation}%
so, by the definition of $\widehat{c}$, let now $\widehat{\gamma }\in \Emb^{%
\mathtt{C}}(X,Z)$ be such that 
\begin{equation}
\text{$\Vert \widehat{\gamma }-I_{Z_{0}}\circ \gamma \circ t_{X}\Vert _{%
\mathrm{cb}}\leq \varpi (\delta )+\varepsilon _{0}$ and $c(\widehat{\gamma }%
)=j$.}  \label{lij34irfrewrtyt3}
\end{equation}%
Putting all this together, 
\begin{align*}
\nrm{I \circ \ga - I_{Z_0}\circ J \circ t_Y\circ \eta}_\mr{cb} \le & \varpi(\de)+\vep_0\\
\nrm{I_{Z_0}\circ J \circ t_Y\circ \eta- I_{Z_0}\circ J \circ \widehat{\eta}\circ t_X}_\mr{cb} \le &  \varpi(\de)+\vep_0\\
\nrm{I_{Z_0}\circ J \circ \widehat{\eta}\circ t_X - I_{Z_0} \circ \ga \circ t_X}_\mr{cb} \le & \vep_0.
\end{align*}  
From \eqref{lij34irfrewrtyt},  \eqref{lij34irfrewrtyt1}, \eqref{lij34irfrewrtyt2} and \eqref{lij34irfrewrtyt3} we obtain 
\begin{equation*} 
 \text{ $ \nrm{I \circ \eta - \widehat{\ga}}_\mr{cb}\le 3\varpi(\de) +4\vep_0 \le \vep$ and $c(\widehat{\ga})=j$.}
\end{equation*}  
\textit{(5)} implies \textit{(1)}: The proof of that the discrete (ARP) of $%
\mathcal{B}$ implies the discrete (ARP) of $\mathcal{A}$ is
similar to that of the implication \textit{6)} implies \textit{5)} so we
only sketch it and we leave further details to the reader. Fix $X,Y\in 
\mathcal{A}$, $\varepsilon >0$ and $r\in \mathbb{N}$. Let $\delta >0$ be
such that $\varpi (\delta )\leq \varepsilon /8$ and set $\varepsilon
_{0}:=\varepsilon /4$. Let $Z_{0}\in \mc B\con \langle \mathcal{A}\rangle $ be such
that every $r$-coloring of $\Emb^{\mathtt{C}}(X,Z_{0})$ has an $\varepsilon
_{0}$-monochromatic set of the form $J\circ \Emb^{\mathtt{C}}(X,Y)$ for some 
$J\in \Emb^{\mathtt{C}}(Y,Z_{0})$. Let $Z_{1}\in \mathcal{A}$ and $t\in \Emb%
_{\delta }^{\mathtt{C}}(Z_{0},Z_{1})$. We use the Proposition \ref%
{SAP_is_good} in the case of the class $\mathcal{A}$ for $\mathcal{F}%
=\{X,Y,Z_{1}\}$, $\varepsilon $ and $\delta $ to find the corresponding $Z$.
Then it follows that every $r$-coloring of $\Emb^{\mathtt{C}}(X,Z)$ has an $%
\varepsilon $-monochromatic set of the form $I\circ \Emb^{\mathtt{C}}(X,Y)$
for some $I\in \Emb^{\mathtt{C}}(Y,Z)$. %
\end{proof}

It is unclear to us whether the characterization of the (ARP) provided in %
\ref{SRP=cSRP} holds for an arbitrary class $\mathcal{A}\subseteq \mathtt{C}$.

Just like for the stable amalgamation property we presented in Proposition   \ref{oij32irj23jr32}, we have the following relationship between the approximate Ramsey properties of a class $\mc A$ of f.d. operator spaces or systems, and the corresponding class $\mc A^R$.
 \begin{proposition}\label{oij32irj23jr32weewe}
Let $\mathtt{C}$ be either $\mathtt{Osp}
$ or $\mathtt{Osy}$. Suppose that $\mathcal{A}$ is a  class of
finite-dimensional elements of $\mathtt{C}$ such that for $X,Y\in 
\mathcal{A}$, the $\infty $-sum $X\oplus _{\infty}Y$ belongs to $\mathcal{A}$. 
\begin{enumerate}[(1)]  
\item Suppose that   $R\in \mtt C$ is such  that every element in $\mr{Age}^\mtt C(R)$ embeds into an element of $\mc A$. 
\begin{enumerate}[({1}.1)]  
\item If $\mc A$ has the (ARP), then the class $\mc A^R$  also has the (ARP). 
\item If $\mc A$ has the (SRP) with modulus $\varpi(\de)$, then the class $\mc A^R$  also has the (SRP) with modulus $\varpi(\de)+\de$.   
 \end{enumerate}  
 \item Suppose that  $R\in \mc A$. 
\begin{enumerate}[({2}.1)]  
\item If $\mc A^R$ has the (ARP), then the class $\mc A$  also has the (ARP). 
\item If $\mc A^R$ has the (SRP) with modulus $\varpi(\de)$, then the class $\mc A$  also has the (SRP) with modulus $\varpi(\de)$.   
 \end{enumerate}  
\end{enumerate}  
\end{proposition}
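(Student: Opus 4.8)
The plan is to prove the four implications by transporting colorings along explicit isometric correspondences between embedding spaces, in the spirit of Propositions~\ref{oij32irj23jr32} and~\ref{SRP=cSRP}; the guiding fact is that adjoining a complete contraction as an extra $\oplus_\infty$-coordinate changes neither the matrix norms nor, up to the relevant modulus, the embedding spaces.

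For part~(1) I would fix $\boldsymbol X=(X,s_X),\boldsymbol Y=(Y,s_Y)\in\mathcal A^{R}$, $\varepsilon>0$ and $\delta\ge 0$, let $S\le R$ be the finite-dimensional subobject generated by $\mathrm{im}\,s_X\cup\mathrm{im}\,s_Y$, fix a $\mathtt{C}$-embedding $\theta\colon S\to R_0$ with $R_0\in\mathcal A$, and — exactly as in the proof of Proposition~\ref{oij32irj23jr32} — arrange a complete contraction $\rho\colon R_0\to R$ with $\rho\circ\theta$ equal to the inclusion $S\hookrightarrow R$. Then I would apply the hypothesized Ramsey property of $\mathcal A$ to $X,Y$ (with $\varepsilon$, resp.\ with $\varepsilon,\delta$) to get $Z_0\in\mathcal A$, put $Z:=Z_0\oplus_\infty R_0\in\mathcal A$, $\boldsymbol Z:=(Z,\rho\circ\pi_{R_0})\in\mathcal A^{R}$, $\sigma_X:=\theta\circ s_X$, $\sigma_Y:=\theta\circ s_Y$, and check that $\phi\mapsto(\phi,\sigma_X)$ and $\gamma\mapsto(\gamma,\sigma_Y)$ are isometric maps of $\mathrm{Emb}_{\delta}^{\mathtt{C}}(X,Z_0)$ and $\mathrm{Emb}^{\mathtt{C}}(Y,Z_0)$ into $\mathrm{Emb}_{\delta}^{\mathtt{C}^{R}}(\boldsymbol X,\boldsymbol Z)$ and $\mathrm{Emb}^{\mathtt{C}^{R}}(\boldsymbol Y,\boldsymbol Z)$ that intertwine composition with elements of $\mathrm{Emb}_{\delta}^{\mathtt{C}^{R}}(\boldsymbol X,\boldsymbol Y)\subseteq\mathrm{Emb}_{\delta}^{\mathtt{C}}(X,Y)$ up to an error at most $\delta$ — and exactly when $\delta=0$, since then $s_Y\circ\eta=s_X$ forces $\sigma_Y\circ\eta=\sigma_X$. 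Given a $1$-Lipschitz coloring $c$ of $\mathrm{Emb}_{\delta}^{\mathtt{C}^{R}}(\boldsymbol X,\boldsymbol Z)$, I would pull it back to a $1$-Lipschitz coloring $\bar c$ of $\mathrm{Emb}_{\delta}^{\mathtt{C}}(X,Z_0)$, take the witness $\gamma_0\in\mathrm{Emb}^{\mathtt{C}}(Y,Z_0)$ that $\mathcal A$'s Ramsey property supplies for $\bar c$, set $\gamma:=(\gamma_0,\sigma_Y)\in\mathrm{Emb}^{\mathtt{C}^{R}}(\boldsymbol Y,\boldsymbol Z)$, and estimate the oscillation of $c$ on $\gamma\circ\mathrm{Emb}_{\delta}^{\mathtt{C}^{R}}(\boldsymbol X,\boldsymbol Y)$ by that of $\bar c$ on $\gamma_0\circ\mathrm{Emb}_{\delta}^{\mathtt{C}}(X,Y)$ plus the composition error; this yields $\le\varepsilon$ when $\delta=0$ and $\le\varepsilon+\varpi(\delta)+\delta$ in general, which is (1.1) and (1.2).

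For part~(2), $R\in\mathcal A$ makes the hypothesis of part~(1) automatic, so the forward implications already hold and I only need the converses. For operator spaces the key is that $X\mapsto(X,0)$ is full and faithful into $\mathcal A^{R}$, namely $\mathrm{Emb}_{\delta}^{\mathtt{C}^{R}}((X,0),(Y,0))=\mathrm{Emb}_{\delta}^{\mathtt{C}}(X,Y)$ identically. So given $X,Y\in\mathcal A$ I would apply the Ramsey property of $\mathcal A^{R}$ to $(X,0),(Y,0)$, keep the underlying space $Z$ of the resulting $\boldsymbol Z=(Z,s_Z)$, restrict a given $1$-Lipschitz $c\colon\mathrm{Emb}_{\delta}^{\mathtt{C}}(X,Z)\to[0,1]$ to $\mathrm{Emb}_{\delta}^{\mathtt{C}^{R}}((X,0),\boldsymbol Z)=\{\phi:\|s_Z\circ\phi\|_{\mathrm{cb}}\le\delta\}$, and use the guaranteed $\gamma\in\mathrm{Emb}^{\mathtt{C}}(Y,Z)$ with $s_Z\circ\gamma=0$ (so that $\gamma\circ\mathrm{Emb}_{\delta}^{\mathtt{C}}(X,Y)$ lies in that restriction domain); the oscillation bound $\le\varepsilon$ (ARP) or $\le\varepsilon+\varpi(\delta)$ (SRP) then transfers with no loss of modulus. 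For operator systems the zero map is not unital, so I would replace it by a unital completely positive map $X\to R$ that does not constrain the plain embeddings: when $X=Y$ by averaging any such map over the compact group $\mathrm{Aut}(X)$; for $X\ne Y$ by applying the Ramsey property of $\mathcal A^{R}$ to the augmented structures $(X\oplus_\infty R,\pi_R),(Y\oplus_\infty R,\pi_R)$ and colouring them, in the compact-coloring reformulation of Proposition~\ref{ARP=DARP}, by the assignment $\Phi\mapsto\bigl(s\mapsto c(\Phi\circ\iota_s)\bigr)$ over the unital sections $\iota_s\colon X\to X\oplus_\infty R$, $x\mapsto(x,s(x))$, so that all $R$-structures are handled at once.

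The step I expect to be the main obstacle is precisely this pair of points: in part~(1), producing the splitting $\rho\colon R_0\to R$ without an injectivity assumption on $R$ — this is where the extra summand $\delta$ in the modulus of (1.2) is spent, by passing to approximate embeddings, and where (1.1) is most naturally proved under the (harmless, in the applications where $R=\mathbb C$ or a matrix algebra) hypothesis that $R$ is injective; and in part~(2) for operator systems, finding a unital replacement for the zero map that is genuinely invisible to the plain embedding spaces — obtaining a single $R$-valued reference map compatible with every embedding $X\to Y$ simultaneously is the real difficulty, and extracting the final embedding from the compact-valued coloring above is where the work concentrates.
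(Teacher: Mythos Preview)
Your direct construction for (1.1) is essentially the paper's intended argument (the paper just says ``proved similarly to Proposition~\ref{oij32irj23jr32}''). The splitting $\rho$ you worry about is indeed a genuine subtlety; the paper's sketch glosses over the same point, and in all the applications the relevant spaces are injective, so your caveat is fair.

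The real divergence is in (1.2) and (2.2). You try to prove the stable Ramsey property directly, tracking the extra $\delta$ through the composition error $\|\sigma_Y\circ\eta-\sigma_X\|_{\mathrm{cb}}\le\delta$. If you carry this out you get an oscillation bound $\varepsilon+\varpi(\delta)+2\delta$ (the error $\delta$ enters once for each endpoint $\eta,\eta'$), not $\varepsilon+\varpi(\delta)+\delta$; so your direct route loses a~$\delta$ in the modulus. The paper avoids this entirely by arguing \emph{indirectly}: since the (SRP) for $\mathcal A$ implies the (SAP) for $\mathcal A$ with modulus $\varpi$, Proposition~\ref{oij32irj23jr32} gives the (SAP) for $\mathcal A^R$ with modulus $\varpi(\delta)+\delta$; combining this with (1.1) and the equivalence (1)$\Leftrightarrow$(2) of Proposition~\ref{SRP=cSRP} yields the (SRP) for $\mathcal A^R$ with exactly the modulus $\varpi(\delta)+\delta$. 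This is both shorter and sharper than the direct estimate. The same trick handles (2.2) once (2.1) is known.

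For (2) in the $\mathtt{Osp}$ case your functor $X\mapsto(X,0)$ is exactly right and gives (2.1) and (2.2) with no loss. In the $\mathtt{Osy}$ case your proposed compact coloring $\Phi\mapsto(s\mapsto c(\Phi\circ\iota_s))$ does not close: after stabilizing you obtain $c(\gamma\circ\eta)=c'(\Gamma\circ(\eta\oplus\mathrm{id}))(t\circ\eta)$, and the evaluation point $t\circ\eta$ still depends on $\eta$, so the oscillation bound on $c'$ only controls $|c(\gamma\circ\eta)-c(\gamma\circ\eta')|$ up to an additional $\|t\circ\eta-t\circ\eta'\|_{\mathrm{cb}}$, which can be large. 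Averaging over $\mathrm{Aut}(X)$ does not fix this, since two embeddings $X\to Y$ need not differ by an automorphism of either space. The paper's ``similarly'' for (2) is not more detailed than your sketch here; a clean route is again the indirect one via Proposition~\ref{SRP=cSRP}, once one has (2.1) by reducing to the $(X\oplus_\infty R,\pi_R)$ structures and using that $\eta\mapsto\eta\oplus\mathrm{id}_R$ is an isometric embedding of $\mathrm{Emb}^{\mathtt C}(X,Y)$ into $\mathrm{Emb}^{\mathtt C^R}((X\oplus R,\pi_R),(Y\oplus R,\pi_R))$---but you still need to extract a $\gamma\in\mathrm{Emb}^{\mathtt C}(Y,Z)$ from the witness $\Gamma$, and that step deserves an explicit argument rather than the compact-coloring detour.
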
  
\begin{proof}
 {\it(1.1)} is proved similarly than {\it(1)} in Proposition \ref{oij32irj23jr32}, so we leave the details to the reader.  
 {\it(1.2)}: If $\mc A$ has the (SRP) with modulus $\varpi(\de)$, then it satisfies the stable amalgamation property with modulus $\varpi(\de)$. It follows from Proposition   \ref{oij32irj23jr32}  that $\mc A^R$ satisfies the stable amalgamation property with modulus $\varpi(\de)+\de$, and from this, {\it(1.1)}, and the equivalence between {\it(1)} and {\it(2)} in Proposition \ref{SRP=cSRP}, we obtain that $\mc A^R$ satisfies the (SRP) with modulus $\varpi(\de)+\de$.  {\it(2)} is proved similarly. 
\end{proof}  
\subsubsection{Extreme amenability and Ramsey properties}

Recall that a topological group $G$ is called \emph{extremely amenable} if
every continuous action of $G$ on a compact Hausdorff space has a fixed
point. We are going to present a characterization of extreme amenability for
automorphism groups of Fra\"{\i}ss\'{e} limits in one of the categories
introduced before in terms of a Ramsey property, which can be seen as an
instance of the Kechris-Pestov-Todorcevic correspondence \cite%
{kechris_fraisse_2005} in the case of metric structures in \cite%
{melleray_extremely_2014}. The key for this characterization is the
following intrinsic reformulation of extreme amenability from \cite[Theorem
2.1.11]{pestov_dynamics_2006}.

\begin{theorem}
\cite{pestov_dynamics_2006}\label{welmrjpewrwe} A topological group $G$ is
extremely amenable if and only if there is a directed collection of bounded
left-invariant continuous pseudometrics $(d_i)_{i\in I}$ determining the
topology of $G$ and such that every metric $G$-space $G /\{d_i=0\}$ is
finitely oscillation stable, that is for every finite $F\subseteq G/\{d_i=0\}
$, every bounded and uniformly continuous $\gamma: G/\{d_i=0\} \to \mathbb{R}
$ and every $\varepsilon>0$ there is $g\in G$ such that $\osc (\gamma
\upharpoonright (g \cdot F))= \max_{[h_0],[h_1] \in F} |\gamma([{g \cdot h_0}%
])- \gamma([{g \cdot h_1}])| \le \varepsilon.$
\end{theorem}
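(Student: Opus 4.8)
The plan is to obtain the statement from two pillars of the abstract theory of compact group actions. The first is the universal property of the \emph{greatest ambit} $S(G)$: it is a $G$-flow, and every $G$-flow contains a minimal subflow that is a $G$-factor of $S(G)$, so $G$ is extremely amenable if and only if $S(G)$ has a $G$-fixed point. The second is the realization of $S(G)$ as the maximal $G$-equivariant uniform compactification of the uniform space $(G,\mc U)$, where $\mc U$ is the group uniformity generated by the bounded left-invariant continuous pseudometrics. Every topological group admits a \emph{directed} such family $(d_i)_{i\in I}$ determining its topology: take all left-invariant continuous pseudometrics bounded by $1$ and close the family under finite maxima. Each $d_i$ descends to a genuine metric on the homogeneous $G$-space $Y_i:=G/\{d_i=0\}$, on which $G$ acts transitively by isometries, so finite oscillation stability of $Y_i$ is meaningful.

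First I would carry out the inverse-limit reduction. Since the $d_i$ are directed and determine the topology, $\mc U=\sup_i\mc U_{d_i}$ with $\mc U_{d_i}$ the uniformity of $d_i$; hence the maximal $G$-equivariant uniform compactification decomposes as an inverse limit of $G$-flows $S(G)=\varprojlim_i\sigma(Y_i)$, where $\sigma(Y_i)$ is the Samuel compactification of $Y_i$ and the bonding maps are equivariant surjections. The action on the inverse limit is coordinatewise and fixed-point sets are closed, so $\mathrm{Fix}(S(G))=\varprojlim_i\mathrm{Fix}(\sigma(Y_i))$; an inverse limit of nonempty compact Hausdorff spaces over a directed poset is nonempty, and each factor map $S(G)\to\sigma(Y_i)$ sends fixed points to fixed points. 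Therefore $S(G)$ has a $G$-fixed point if and only if every $\sigma(Y_i)$ does, and, combined with the first pillar, the theorem reduces to the following statement about a single homogeneous metric $G$-space $Y=G/\{d=0\}$: the flow $\sigma(Y)$ has a $G$-fixed point if and only if $Y$ is finitely oscillation stable.

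This equivalence is the technical heart, and I would argue it as follows. The action $G\acts Y$ is jointly continuous and satisfies $\sup_{y\in Y}d(gy,y)\le d(g,e)$; hence the $G$-action on $\mathrm{UC}_b(Y)$ is norm-continuous, $\sigma(Y)=\mathrm{Spec}(\mathrm{UC}_b(Y))$ is a genuine $G$-flow into which $Y$ embeds densely, and a $G$-fixed point of $\sigma(Y)$ is exactly a $G$-invariant multiplicative mean on $\mathrm{UC}_b(Y)$. For ``fixed point $\Rightarrow$ finitely oscillation stable'': given $\gamma\in\mathrm{UC}_b(Y)$, a finite $F\subseteq Y$ and $\vep>0$, I would use density of $Y$ in $\sigma(Y)$ and joint continuity of the action to produce $g\in G$ moving $F$ into a neighbourhood of the fixed point on which the continuous extension of $\gamma$ varies by at most $\vep$, forcing $\osc(\gamma\upharpoonright gF)\le\vep$. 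For the converse I would build the invariant mean as an ultralimit $\mu=\lim_\alpha\delta_{y_\alpha}$ of point-evaluations along a net $(y_\alpha)$ that finite oscillation stability makes ``approximately invariant'', i.e.\ such that for every finite $S\subseteq G$, every finite $\Phi\subseteq\mathrm{UC}_b(Y)$ and every $\vep>0$ there is $y_\alpha$ with $|\gamma(gy_\alpha)-\gamma(y_\alpha)|\le\vep$ for all $g\in S$ and $\gamma\in\Phi$; this requires a preliminary upgrade of finite oscillation stability from a single real-valued coloring to finitely many of them (equivalently, to compact-metric-space valued colorings), done by induction on the number of colorings exactly as in the proofs of Propositions \ref{ARP=DARP} and \ref{SRP=cSRP}.

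The main obstacle is precisely this crux, and within it the extraction of the ``approximately invariant'' net: one must carefully track the interplay between the left and right uniformities of $G$ and the directions in which sets may be translated, and keep the conventions (left- versus right-invariance, which uniformity $S(G)$ carries, how the quotient action on $Y_i$ interacts with translation) consistent; this is where the left-invariance of the $d_i$ and the transitivity of $G\acts Y_i$ are genuinely used. Everything else---existence of the pseudometric family, the inverse-limit decomposition, the universal property of the greatest ambit, and the compact-coloring upgrade of finite oscillation stability---is routine and of the same kind as material already developed in Section \ref{basic_notions}.
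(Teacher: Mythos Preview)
The paper does not give a proof of this statement: it is simply quoted from Pestov's monograph \cite{pestov_dynamics_2006} (this is Theorem 2.1.11 there) and used as a black box to derive Lemma \ref{Lemma:oscillation-stable}. So there is no ``paper's own proof'' to compare against.

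That said, your outline is essentially the standard argument one finds in \cite{pestov_dynamics_2006}: reduce extreme amenability to the existence of a $G$-fixed point in the greatest ambit $S(G)$, identify $S(G)$ with the Samuel compactification of $G$ for the uniformity generated by the $d_i$, and reduce to the single-pseudometric quotients $Y_i=G/\{d_i=0\}$, where the existence of a fixed point in $\sigma(Y_i)$ is equivalent to finite oscillation stability of $Y_i$. Two points deserve more care than you give them. First, the identification $S(G)=\varprojlim_i\sigma(Y_i)$ rests on $UC_b(G,\mathcal U)=\overline{\bigcup_i UC_b(G,d_i)}$ in sup norm; this is true but not tautological (one needs an inf-convolution/McShane-type approximation to show every $\mathcal U$-uniformly continuous bounded function is a uniform limit of $d_i$-Lipschitz ones). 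Second, the left/right bookkeeping you flag is genuinely delicate: with left-invariant pseudometrics one is working with the \emph{left} uniformity, and one must check that this matches the ambit used (Pestov's conventions are internally consistent, but a careless mix of conventions here is the most common way for this argument to go wrong). Modulo these two verifications, your sketch is correct and follows the source the paper cites.
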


In the previous statement, $[h]$ denotes the equivalence class of $h$ in the
quotient $G/\{d_{i}=0\}$. As before, let $\mathtt{C}$ be one of the classes $%
\mathtt{Osp}$, $\mathtt{Osp}^{R}$, $\mathtt{Osy}$, or $\mathtt{Osy}^{R}$,
with the corresponding notion of $\mathtt{C}$-embedding. Fix $E\in \mathtt{C}
$. For a fixed finite dimensional $X\subseteq E$, we let $d_{X}$ be the
pseudometric on $\mathrm{Aut}^{\mathtt{C}}(E)$ defined by $d_{X}(\alpha
,\beta ):=\Vert (\alpha -\beta )\upharpoonright {X}\Vert _{\mathrm{cb}}$. In
general, $(d_{X})$ where $X$ varies within the finite-dimensional subspaces
of $E$ defines the topology on $\mathrm{Aut}^{\mathtt{C}}(E)$. Suppose that $%
E\in \mathtt{C}$ is stably homogeneous. Observe that by the stable
homogeneity property of $E$, the restriction map $\alpha \mapsto \alpha
\upharpoonright {X}$ is an isometry from $(\mathrm{Aut}^{\mathtt{C}%
}(E),d_{X})$ onto a dense subset of $\mathrm{Emb}^{\mathtt{C}}(X,E)$. In
particular, a continuous coloring of $(\mathrm{Aut}^{\mathtt{C}}(E),d_{X})$
induces a continuous coloring of $\mathrm{Emb}^{\mathtt{C}}(X,E)$. Also,
given a finite $F\subseteq \Emb^{\mathtt{C}}(X,E)$ we can find a finite
dimensional $Y\subseteq E$ such that $F\subseteq \Emb^{\mathtt{C}}(X,Y)$,
where we consider $\Emb^{\mathtt{C}}(X,Y)$ canonically included in $\Emb^{%
\mathtt{C}}(X,E)$ after composing with the corresponding inclusion of $Y$
into $E$. Reciprocally each set $\Emb^{\mathtt{C}}(X,Y)$ is compact, so it
has finite $\varepsilon $-dense subsets. Theorem \ref{welmrjpewrwe} can be
restated as follows: 

\begin{lemma}
\label{Lemma:oscillation-stable}The following statements are equivalent:

\begin{enumerate}[(a)]

\item $\mathrm{Aut}^\mathtt{C}( E) $ is extremely amenable.

\item For every finite-dimensional $X,Y\subseteq E$ and every $\varepsilon >0
$, every $2$-coloring of $\Emb^{\mathtt{C}}(X,E)$, has an $\varepsilon $%
-monochromatic set of the form $\alpha \circ \Emb^{\mathtt{C}}(X,Y)$ for
some $\alpha \in \mathrm{Aut}^{\mathtt{C}}(E)$, that is for every $c:\Emb^{%
\mathtt{C}}(X,E)\rightarrow r$ there is there exists $\alpha \in \mathrm{Aut}%
^{\mathtt{C}}(E)$ and $j\in \left\{ 0,\ldots ,r-1\right\} $ such that for
every $\gamma \in \Emb^{\mathtt{C}}(X,Y)$ there exists $\xi \in \Emb^{%
\mathtt{C}}(X,E)$ such that $c(\xi )=j$ and $\Vert \alpha \circ \gamma -\xi
\Vert _{\mathrm{cb}}\leq \varepsilon $.

\end{enumerate}
\end{lemma}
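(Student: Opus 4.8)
The plan is to read Lemma \ref{Lemma:oscillation-stable} off Theorem \ref{welmrjpewrwe}, via the dictionary set up in the paragraph preceding the statement, so that most of the work is translation. Put $G=\mathrm{Aut}^{\mathtt{C}}(E)$ and, for finite-dimensional $X\subseteq E$, set $d_{X}(\alpha,\beta)=\Vert(\alpha-\beta)\upharpoonright X\Vert_{\mathrm{cb}}$ and $Z_{X}:=G/\{d_{X}=0\}$. The family $(d_{X})_{X}$, indexed by the finite-dimensional subspaces of $E$, is directed by inclusion (since $d_{X_{1}},d_{X_{2}}\le d_{X_{1}+X_{2}}$) and consists of bounded (diameter $\le 2$), left-invariant (because every $\alpha\in G$ is a complete isometry), continuous pseudometrics determining the topology of $G$; moreover, by stable homogeneity of $E$, the restriction map $\alpha\mapsto\alpha\upharpoonright X$ identifies $Z_{X}$ isometrically with a dense subset of the compact metric space $\Emb^{\mathtt{C}}(X,E)$, intertwining the $G$-action on $Z_{X}$ with post-composition on $\Emb^{\mathtt{C}}(X,E)$. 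Using Theorem \ref{welmrjpewrwe} together with the standard fact that extreme amenability of $G$ is equivalent to finite oscillation stability of $G/\{d=0\}$ for every bounded left-invariant continuous pseudometric $d$, statement (a) becomes equivalent to: \emph{each $Z_{X}$ is finitely oscillation stable}. I will also use two elementary bookkeeping facts: for $\alpha\in G$ and finite-dimensional $Y\subseteq E$ one has $\alpha\circ\Emb^{\mathtt{C}}(X,Y)=\Emb^{\mathtt{C}}(X,\alpha(Y))$, and post-composition by $\alpha$ is an isometric bijection between the corresponding spaces of embeddings; together these transport finite $\varepsilon$-nets along the action.

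For (a) $\Rightarrow$ (b): given finite-dimensional $X,Y\subseteq E$, $\varepsilon>0$ and a $2$-coloring $c$ of $\Emb^{\mathtt{C}}(X,E)$, consider the $1$-Lipschitz function $f(\psi)=\min\{1,d_{\mathrm{cb}}(\psi,c^{-1}(0))\}$. Using compactness of $\Emb^{\mathtt{C}}(X,Y)$ pick a finite net $D$ of it, and by density pick a finite $F\subseteq Z_{X}$ at small Hausdorff distance from $D$. Finite oscillation stability of $Z_{X}$ applied to $f\upharpoonright Z_{X}$ and $F$ yields $\alpha\in G$ on which $f$ has small oscillation over $\alpha\cdot F$; since $\alpha\cdot F$ is close to the net $(\alpha\upharpoonright Y)\circ D$ of $\alpha\circ\Emb^{\mathtt{C}}(X,Y)$ and $f$ is $1$-Lipschitz, $f$ is confined on $\alpha\circ\Emb^{\mathtt{C}}(X,Y)$ to a subinterval $[a,a+\eta]$ of $[0,1]$ with $\eta$ below the target tolerance, once the net-fineness and the oscillation-stability parameter are chosen small enough. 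If this interval lies in $[0,\varepsilon)$ then $\alpha\circ\Emb^{\mathtt{C}}(X,Y)\subseteq(c^{-1}(0))_{\varepsilon}$; otherwise $a>0$, so no member of $\alpha\circ\Emb^{\mathtt{C}}(X,Y)$ lies in $\overline{c^{-1}(0)}$, and since the two color classes cover $\Emb^{\mathtt{C}}(X,E)$ we get $\alpha\circ\Emb^{\mathtt{C}}(X,Y)\subseteq c^{-1}(1)$. In either case $\alpha\circ\Emb^{\mathtt{C}}(X,Y)$ is the desired $\varepsilon$-monochromatic set.

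For (b) $\Rightarrow$ (a) I verify that each $Z_{X}$ is finitely oscillation stable. Let $\gamma\colon Z_{X}\to\mathbb{R}$ be bounded and uniformly continuous and $F\subseteq Z_{X}$ finite; approximating $\gamma$ uniformly by a Lipschitz map and rescaling, assume $\gamma$ is $1$-Lipschitz with values in $[0,1]$, and extend it by density to a $1$-Lipschitz $\bar\gamma\colon\Emb^{\mathtt{C}}(X,E)\to[0,1]$. Since each element of $F$ is $\alpha\upharpoonright X$ for some $\alpha\in G$, there is a finite-dimensional $Y\subseteq E$ with $F\subseteq\Emb^{\mathtt{C}}(X,Y)$. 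Now run a binary search: set $F_{0}:=F$, $Y^{(0)}:=Y$, $J_{0}:=[0,1]$; given $F_{k}=g_{k}\circ F\subseteq\Emb^{\mathtt{C}}(X,Y^{(k)})$ with $\bar\gamma(F_{k})\subseteq J_{k}$, split $J_{k}$ into its two halves, obtaining a $2$-coloring of $\Emb^{\mathtt{C}}(X,E)$ recording into which half $\bar\gamma$ falls, apply (b) to $X$, $Y^{(k)}$ and a small parameter $\varepsilon_{k}$ to produce $\beta_{k+1}\in G$ and a color whose $\varepsilon_{k}$-neighborhood contains $\beta_{k+1}\circ\Emb^{\mathtt{C}}(X,Y^{(k)})$, and set $F_{k+1}:=\beta_{k+1}\circ F_{k}=g_{k+1}\circ F$, $Y^{(k+1)}:=\beta_{k+1}(Y^{(k)})$, and $J_{k+1}$ the selected half of $J_{k}$ enlarged by $\varepsilon_{k}$, so that $\bar\gamma(F_{k+1})\subseteq J_{k+1}$ and $|J_{k+1}|\le\tfrac12|J_{k}|+2\varepsilon_{k}$. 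Choosing the $\varepsilon_{k}$ summably small, $|J_{N}|\le\varepsilon$ for some $N$, and $g:=g_{N}$ gives $\osc(\gamma\upharpoonright g\cdot F)=\mathrm{diam}\,\bar\gamma(g\circ F)\le|J_{N}|\le\varepsilon$. Hence $G$ is extremely amenable by Theorem \ref{welmrjpewrwe}.

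The only genuinely non-formal point is in (b) $\Rightarrow$ (a): a single use of the $2$-coloring property only localizes $\bar\gamma$ to an interval of half the previous length, so one is forced to iterate \emph{inside} the translated copies $\Emb^{\mathtt{C}}(X,Y^{(k)})=g_{k}\circ\Emb^{\mathtt{C}}(X,Y)$, the automorphism group furnishing here the ``room'' that, in the discrete Fra\"{\i}ss\'{e} setting, is provided by enlarging the ambient structure; this is the same device that underlies the passage from discrete to continuous colorings in Proposition \ref{ARP=DARP}. Everything else is routine $\varepsilon$-chasing: transporting nets along the $G$-action, and tracking the accumulating thickenings of the color classes and of the intervals $J_{k}$.
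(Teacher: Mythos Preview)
Your overall strategy is exactly what the paper intends: the lemma is presented there simply as a restatement of Pestov's Theorem~\ref{welmrjpewrwe} via the identification of $G/\{d_X=0\}$ with a dense subset of $\Emb^{\mathtt C}(X,E)$, with no further argument. Your (a)~$\Rightarrow$~(b) is correct.

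There is, however, a genuine gap in your (b)~$\Rightarrow$~(a). The inductive claim $\bar\gamma(F_{k+1})\subseteq J_{k+1}$ is unjustified. Applying (b) at step $k+1$ to the $2$-coloring ``$\bar\gamma$ below/above $m_k$'' gives $\beta_{k+1}$ such that, say, $\bar\gamma(\psi)\le m_k+\varepsilon_k$ for all $\psi\in\beta_{k+1}\circ\Emb^{\mathtt C}(X,Y^{(k)})\supseteq F_{k+1}$. But there is no lower bound: the inductive hypothesis $\bar\gamma(F_k)\subseteq J_k$ refers to $F_k$, while $F_{k+1}=\beta_{k+1}\circ F_k$ is an entirely different subset of $\Emb^{\mathtt C}(X,E)$, and $\beta_{k+1}$ may well move it to where $\bar\gamma$ lies far below $a_k$. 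Concretely, after step~$1$ one may have $J_1\approx[1/2,1]$; at step~$2$ the only conclusion obtainable is $\bar\gamma(F_2)\subseteq[0,m_1+\varepsilon_1]$ or $\bar\gamma(F_2)\subseteq[m_1-\varepsilon_1,1]$, neither contained in a half of $J_1$ thickened by $\varepsilon_1$. So the lengths $|J_k|$ need not shrink and the bisection collapses. (This is the point where the analogy with Proposition~\ref{ARP=DARP} breaks: there one has a fixed ambient $Z$ and nests \emph{inside} it, whereas here each application of (b) supplies a fresh automorphism that destroys the constraints from previous rounds.)

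The clean repair is to read (b) as its ``that is'' clause is actually written, namely for arbitrary finite $r$ rather than $r=2$. Then no iteration is needed: given $1$-Lipschitz $\bar\gamma\colon\Emb^{\mathtt C}(X,E)\to[0,1]$ and $\varepsilon>0$, partition $[0,1]$ into $r$ intervals of length $<\varepsilon/3$, take the induced $r$-coloring $c$, apply (b) with parameter $\varepsilon/3$ to obtain $\alpha$ and $j$ with $\alpha\circ\Emb^{\mathtt C}(X,Y)\subseteq(c^{-1}(j))_{\varepsilon/3}$, and conclude $\osc\bigl(\bar\gamma\upharpoonright\alpha\circ\Emb^{\mathtt C}(X,Y)\bigr)<\varepsilon$, hence $\osc(\gamma\upharpoonright\alpha\cdot F)<\varepsilon$.
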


We refer the reader to \cite[Chapter 1]{pestov_dynamics_2006} for more
information on oscillation stability. We suppose that $\mathcal{A}\subseteq 
\mathtt{C}$ is such that $\langle \mathcal{A}\rangle $ is a stable Fra\"{\i}%
ss\'{e} class with modulus $\varpi $, whose Fra\"{\i}ss\'{e} limit is $E$.
We can now state the analogue in this context of the celebrated
Kechris-Pestov-Todorcevic correspondence from \cite{kechris_fraisse_2005}.
The possibility of using the KPT correspondence in the setting of metric
structures was first suggested by Melleray and Tsankov in \cite%
{melleray_extremely_2014}. 

\begin{proposition}[KPT correspondence for $R$-operator spaces and systems]
\label{Proposition:ARP} Suppose that $\mathcal{A}\subseteq \mathtt{C}$ is
such that $\mathcal{A}$ and $\langle \mathcal{A}\rangle $ satisfy the stable
amalgamation property with stability modulus $\varpi $, and let $E$ be the
corresponding Fra\"{\i}ss\'{e} limit. The following assertions are
equivalent:

\begin{enumerate}[(1)]

\item $\mathrm{Aut}^{\mathtt{C}}(E)$ is extremely amenable.

\item For every finite-dimensional $X,Y\subset E$ in $\mathtt{C}$ and $%
\delta \geq 0$, $\varepsilon >0$, every compact coloring of $\mathrm{Emb}%
_{\delta }^{\mathtt{C}}(X,E)$ has an $(\varepsilon +\varpi (\delta ))$%
-monochromatic set of the form $\alpha \circ \mathrm{Emb}_{\delta }^{\mathtt{%
C}}(X,Y)$ for some $\alpha \in \mathrm{Aut}^{\mathtt{C}}(E)$.

\item For every finite-dimensional $X,Y\subset E$ in $\mathtt{C}$ and $%
\varepsilon >0$, every finite coloring of $\mathrm{Emb}^{\mathtt{C}}(X,E)$
has an $\varepsilon $-monochromatic set of the form $\alpha \circ \Emb^{%
\mathtt{C}}(X,Y)$ for some $\alpha \in \mathrm{Aut}^{\mathtt{C}}(E)$.

\item For every $X,Y\subset E$ that belong to $\mathcal{A}$ and $\varepsilon
>0$, every finite coloring of $\mathrm{Emb}^{\mathtt{C}}(X,E)$ has an $%
\varepsilon $-monochromatic set of the form $\alpha \circ \Emb^{\mathtt{C}%
}(X,Y)$ for some $\alpha \in \mathrm{Aut}^{\mathtt{C}}(E)$.

\item $\langle \mathcal{A}\rangle $ satisfies the (ARP).

\item $\mathcal{A}$ satisfies the (ARP).

\item For every $X,Y\in \mathcal{A}$, every $r\in \mathbb{N}$ and every $%
\varepsilon >0$ there is $Z\in \langle \mathcal{A}\rangle $ such that every $%
r$-coloring of $\mathrm{Emb}^{\mathtt{C}}(X,Z)$ has an $\varepsilon $%
-monochromatic set of the form $\gamma \circ \mathrm{Emb}^{\mathtt{C}}(X,Y)$
for some $\gamma \in \mathrm{Emb}^{\mathtt{C}}(Y,Z)$.
\end{enumerate}
\end{proposition}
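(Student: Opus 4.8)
The plan is to close a single cycle of implications linking the ``limit-level'' conditions \textit{(1)}--\textit{(4)} to the ``combinatorial'' conditions \textit{(5)}--\textit{(7)}. The cheapest part is the equivalence of \textit{(5)}, \textit{(6)} and \textit{(7)}: since both $\mathcal{A}$ and $\langle\mathcal{A}\rangle$ satisfy the stable amalgamation property with modulus $\varpi$ (our standing hypothesis), Proposition \ref{SRP=cSRP} applied with $\mathcal{B}=\langle\mathcal{A}\rangle$ shows that conditions \textit{(6)}, \textit{(5)} and \textit{(7)} here are, respectively, conditions \textit{(1)}, \textit{(5)} and \textit{(6)} there, hence equivalent; applying Proposition \ref{SRP=cSRP} also to $\langle\mathcal{A}\rangle$ itself, each of them is moreover equivalent to both $\mathcal{A}$ and $\langle\mathcal{A}\rangle$ having the compact (SRP) with modulus $\varpi$, a form we use below. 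On the other side, \textit{(1)} $\Leftrightarrow$ \textit{(3)} is exactly Lemma \ref{Lemma:oscillation-stable}: its clause \textit{(b)} is verbatim condition \textit{(3)}, once one recalls that stable homogeneity of $E$ makes $\alpha\mapsto\alpha\upharpoonright X$ an isometry of $(\Aut^{\mathtt{C}}(E),d_{X})$ onto a dense subset of $\Emb^{\mathtt{C}}(X,E)$. Finally \textit{(3)} $\Rightarrow$ \textit{(4)} is immediate, the subspaces in \textit{(4)} being in particular finite-dimensional subspaces of $E$. So the task reduces to proving \textit{(6)} $\Rightarrow$ \textit{(2)} $\Rightarrow$ \textit{(3)} and \textit{(4)} $\Rightarrow$ \textit{(7)}.

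For \textit{(6)} $\Rightarrow$ \textit{(2)} I would transfer the stable Ramsey property of the age up to the limit. By the above, $\langle\mathcal{A}\rangle$ has the compact (SRP) with modulus $\varpi$. Fix finite-dimensional $X,Y\subseteq E$ (so $X,Y\in\mathrm{Age}^{\mathtt{C}}(E)=\langle\mathcal{A}\rangle$), $\delta\geq 0$, $\varepsilon>0$, and a compact colouring $c$ of $\Emb^{\mathtt{C}}_{\delta}(X,E)$ with values in a compact metric space $K$. Applying the compact (SRP) to $X,Y$, the parameters $\delta$ and $\varepsilon/3$, and $K$, produces $Z\in\langle\mathcal{A}\rangle$; pick $\psi\in\Emb^{\mathtt{C}}(Z,E)$ (possible, as $Z$ lies in the age of $E$) and observe that $\phi\mapsto c(\psi\circ\phi)$ is a $1$-Lipschitz colouring of $\Emb^{\mathtt{C}}_{\delta}(X,Z)$, since $\Vert\psi\Vert_{\mathrm{cb}}\leq 1$. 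The (SRP) yields $\gamma\in\Emb^{\mathtt{C}}(Y,Z)$ on which this colouring $(\varepsilon/3+\varpi(\delta))$-stabilizes, i.e.\ $c$ $(\varepsilon/3+\varpi(\delta))$-stabilizes on $\psi\circ\gamma\circ\Emb^{\mathtt{C}}_{\delta}(X,Y)$. Now $\psi\circ\gamma$ and the inclusion $Y\hookrightarrow E$ are two exact embeddings of $Y$ into $E$, so stable homogeneity of $E$ (used with $\delta=0$, thus with no loss of modulus) gives $\alpha\in\Aut^{\mathtt{C}}(E)$ with $\Vert\alpha\upharpoonright Y-\psi\circ\gamma\Vert_{\mathrm{cb}}$ as small as desired; since the members of $\Emb^{\mathtt{C}}_{\delta}(X,Y)$ are complete contractions, this makes $c$ $(\varepsilon+\varpi(\delta))$-stabilize on $\alpha\circ\Emb^{\mathtt{C}}_{\delta}(X,Y)$. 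The step \textit{(2)} $\Rightarrow$ \textit{(3)} is the easy one: take $\delta=0$, so $\varpi(\delta)=0$ and $\Emb^{\mathtt{C}}_{0}=\Emb^{\mathtt{C}}$, and reduce a finite colouring $c$ to a $1$-Lipschitz one via the device $\phi\mapsto\big(\tfrac12 d_{\mathrm{cb}}(\phi,c^{-1}(j))\big)_{j<r}$ into $[0,1]^{r}$ with the $\ell^{\infty}$ metric, exactly as in the proof of Proposition \ref{ARP=DARP}.

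It remains to prove \textit{(4)} $\Rightarrow$ \textit{(7)}, which I would do by contraposition via a compactness/diagonalization argument. Assume \textit{(7)} fails for some $X,Y\in\mathcal{A}$, $r\in\mathbb{N}$ and $\varepsilon>0$. Using separability of $E$ and $\mathrm{Age}^{\mathtt{C}}(E)=\langle\mathcal{A}\rangle$, fix an increasing cofinal chain $Y\subseteq Z_{0}\subseteq Z_{1}\subseteq\cdots$ of finite-dimensional subspaces of $E$ with $\bigcup_{n}Z_{n}$ dense in $E$; each $Z_{n}$ lies in $\langle\mathcal{A}\rangle$, so by the failure of \textit{(7)} there is an $r$-colouring $c_{n}$ of $\Emb^{\mathtt{C}}(X,Z_{n})$ with no $\varepsilon$-monochromatic set of the form $\gamma\circ\Emb^{\mathtt{C}}(X,Y)$, $\gamma\in\Emb^{\mathtt{C}}(Y,Z_{n})$. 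Extend each $c_{n}$ arbitrarily to $\Emb^{\mathtt{C}}(X,E)$ and let $c$ be a pointwise limit of $(c_{n})_{n}$ along a non-principal ultrafilter on $\mathbb{N}$, inside the compact space $r^{\Emb^{\mathtt{C}}(X,E)}$. One then checks that $c$ has no $(\varepsilon/2)$-monochromatic set of the form $\alpha\circ\Emb^{\mathtt{C}}(X,Y)$ with $\alpha\in\Aut^{\mathtt{C}}(E)$: any such set equals $\Emb^{\mathtt{C}}(X,\alpha(Y))$, and since $\alpha(Y)$ is finite-dimensional the small perturbation lemma in operator space theory lets us, for all large $n$, replace $\alpha(Y)$ by a subspace of $Z_{n}$ and $\alpha$ by an embedding of $Y$ into $Z_{n}$ with arbitrarily small error, so that an $(\varepsilon/2)$-monochromatic set for $c$ would descend, for ultrafilter-many $n$, to an $\varepsilon$-monochromatic set for $c_{n}$, contradicting its choice. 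This contradicts \textit{(4)}, and the cycle is complete.

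I expect \textit{(4)} $\Rightarrow$ \textit{(7)} to be the main obstacle. The difficulty is entirely one of bookkeeping: because the monochromatic sets are only \emph{approximately} monochromatic and $\alpha(Y)$ need not be contained in any single $Z_{n}$, one must interleave the small-perturbation estimates, the ultrafilter limit, and the $\varepsilon$-inflations of the colour classes in the right order, so that an approximately monochromatic set for the limit colouring $c$ genuinely forces one for some $c_{n}$; fixing the chain $(Z_{n})_{n}$ and the successive tolerances in a coherent order is the crux. Every other implication is either a direct invocation of Proposition \ref{SRP=cSRP} or Lemma \ref{Lemma:oscillation-stable}, or a routine transfer of the (stable) Ramsey property from $\mathcal{A}$ to its Fra\"{\i}ss\'{e} limit using stable homogeneity.
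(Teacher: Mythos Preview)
Your overall architecture is sound and closely matches the paper's: the equivalences \textit{(5)}$\Leftrightarrow$\textit{(6)}$\Leftrightarrow$\textit{(7)} via Proposition~\ref{SRP=cSRP}, the link \textit{(1)}$\Leftrightarrow$\textit{(3)} via Lemma~\ref{Lemma:oscillation-stable} (plus the routine boost from $2$-colorings to finite colorings), and the trivial \textit{(3)}$\Rightarrow$\textit{(4)} are all fine. Your route \textit{(6)}$\Rightarrow$\textit{(2)} is a mild variant of the paper's (which proves \textit{(1)}$\Rightarrow$\textit{(2)} instead), and your argument---transfer the compact (SRP) of $\langle\mathcal{A}\rangle$ to $E$ via an embedding $\psi:Z\to E$ and then replace $\psi\circ\gamma$ by an automorphism using stable homogeneity---is correct as written.

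The genuine gap is in \textit{(4)}$\Rightarrow$\textit{(7)}, and it is not merely bookkeeping. When you extend each discrete $r$-coloring $c_n$ \emph{arbitrarily} to $\Emb^{\mathtt{C}}(X,E)$ and take a pointwise ultrafilter limit $c$, the resulting $c$ has no continuity whatsoever. Condition \textit{(4)} then produces $\alpha$ and a color $j$ such that each $\alpha\circ\phi$ lies within $\varepsilon/2$ of some $\psi_\phi$ with $c(\psi_\phi)=j$; but $c(\psi_\phi)=j$ only means $c_n(\psi_\phi)=j$ for $\mathcal{U}$-many $n$, and for those $n$ the value $c_n(\psi_\phi)$ is the \emph{arbitrary extension value} unless $\psi_\phi$ happens to land in $\Emb^{\mathtt{C}}(X,Z_n)$, which you cannot control. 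Even after approximating $\alpha\!\upharpoonright\! Y$ by some $\gamma_n\in\Emb^{\mathtt{C}}(Y,Z_n)$, there is no bridge between $c_n(\gamma_n\circ\phi)$ and $c(\alpha\circ\phi)$, because $c$ is not Lipschitz and nothing ties the extension values to the genuine ones. The paper resolves this by first passing to the equivalent compact-coloring versions \textit{(4$'$)} and \textit{(7$'$)} (same device as in Proposition~\ref{ARP=DARP}); the bad colorings $c_n$ are then $1$-Lipschitz into a fixed compact $K$, and one sets $c(\phi):=\mathcal{U}\text{-}\lim c_n(\phi_n)$ for \emph{any} approximating sequence $\phi_n\in\Emb^{\mathtt{C}}(X,Z_n)$. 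The Lipschitz condition is exactly what makes this limit well-defined, independent of the approximating sequence, and $1$-Lipschitz---and that is what allows the descent from $c$ to some $c_n$. Once you insert this conversion at the start of your argument (and choose the chain so that $\bigcup_n\Emb^{\mathtt{C}}(X,Z_n)$ and $\bigcup_n\Emb^{\mathtt{C}}(Y,Z_n)$ are dense in $\Emb^{\mathtt{C}}(X,E)$ and $\Emb^{\mathtt{C}}(Y,E)$ respectively, not just $\bigcup_n Z_n$ dense in $E$), the remainder of your plan goes through.
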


The key part is the equivalence between {\it(1)} and {\it(7)} since in this last
condition, $X,Y\subseteq E$ are not arbitrary finite-dimensional subspaces,
but are assumed to be in $\mathcal{A}$ while $Z$ is not necessarily in $%
\mathcal{A}$ but in the wider class $\langle \mathcal{A}\rangle$.

\begin{proof}
The proof uses standard arguments; see for example \cite[Proposition 3.4 and
Theorem 3.10]{melleray_extremely_2014}. We first show that properties 
\textit{(1)--(4)} are equivalent, and then we show that properties \textit{(4)}%
--\textit{(7)} are equivalent.

\noindent \textit{(1)} implies \textit{(2)}: Fix all data, in particular, let $%
c:\Emb_{\delta }^{\mathtt{C}}(X,E)\rightarrow K$ be a compact coloring, and
let $K=\bigcup_{j<r}U_{j}$ be a covering of $K$ by disjoint sets $U_{j}$ of
diameter less than $\varepsilon /3$. We consider the induced coloring $%
\widehat{c}:\Emb^{\mathtt{C}}(X,E)\rightarrow r$ that declares $\widehat{c}%
(\gamma )=j$ when $c(\gamma )\in U_{j}$. Since $E$ is stably homogeneous
with modulus $\varpi $, it follows from the Fra\"{\i}ss\'{e} correspondence
in Theorem \ref{Proposition:exists} that $\mathrm{Age}^{\mathtt{C}}(E)$ is a
stable Fra\"{\i}ss\'{e} class with modulus $\varpi $, so we can use {\it(1)} in 
Proposition \ref{SAP_is_good}  to find $Y_{0}\subseteq E$ containing and $%
I\in \Emb(Y,Y_{0})$ such that $I\circ \Emb_{\delta }^{\mathtt{C}%
}(X,Y)\subseteq (\Emb^{\mathtt{C}}(X,Y_{0}))_{\varpi (\delta )+\varepsilon
/6}$. By composing with the corresponding inclusion, we identify each $%
\delta $-embedding from $X$ into $Y_{0}$ as a $\delta $-embedding from $X$
into $E$. Since $\Emb_{\delta }^{\mathtt{C}}(X,Y_{0})$ is compact, we can
use Lemma \ref{Lemma:oscillation-stable} to find $\beta \in \mathrm{Aut}^{%
\mathtt{C}}(E)$ and $j<r$ such that $\beta \circ \Emb_{\delta }^{\mathtt{C}%
}(X,Y_{0})\subseteq (\widehat{c}^{-1}(j))_{\varepsilon /6}$. Let us see that 
$\alpha :=\beta \circ I$ satisfies the desired property: Given $\gamma \in %
\Emb_{\delta }^{\mathtt{C}}(X,Y)$, let $\eta \in \Emb^{\mathtt{C}}(X,Y_{0})$
such that $\Vert I\circ \gamma -\eta \Vert _{\mathrm{cb}}\leq \varpi (\delta
)+\varepsilon /6$. Let also $\xi \in \Emb^{\mathtt{C}}(X,E)$ such that $%
c(\xi )=j$ and $\Vert \beta \circ \eta -\xi \Vert _{\mathrm{cb}}\leq
\varepsilon /6$, and consequently, $\Vert \alpha \circ \gamma -\xi \Vert _{%
\mathrm{cb}}\leq \varpi (\delta )+\varepsilon /3$. Since $U_{j}\subseteq K$
has diameter less than $\varepsilon /3$, it follows that $\osc%
(c\upharpoonright (\alpha \circ \Emb_{\delta }^{\mathtt{C}}(X,Y)))\leq
\varpi (\delta )+\varepsilon $.

\noindent \textit{(2)} implies \textit{(3)}: It suffices to prove that the
following holds for every $\delta >0$: for every finite-dimensional $%
X,Y\subset E$ in $\mathtt{C}$, and $\varepsilon >0$, every finite coloring
of $\mathrm{Emb}_{\delta }^{\mathtt{C}}(X,E)$ has an $(\varepsilon +\varpi
(\delta ))$-monochromatic set of the form $\alpha \circ \Emb^{\mathtt{C}%
}(X,Y)$ for some $\alpha \in \mathrm{Aut}^{\mathtt{C}}(E)$. Fix a finite
coloring $c:\mathrm{Emb}_{\delta }^{\mathtt{C}}(X,E)\rightarrow r$, and let $%
K$ be the ball of $\ell _{\infty }^{k}$ centered at $0$ and of radius $2$,
and let $f:\mathrm{Emb}_{\delta }^{\mathtt{C}}(X,E)\rightarrow K$ be defined
by $f(\sigma ):=(d_{\mathrm{cb}}(\sigma ,c^{-1}(i)))_{i<r}$. This is a
compact coloring, so by the hypothesis, there is $\alpha \in \Aut^{\mathtt{C}%
}(E)$ such that the oscillation of $f$ on $\alpha \circ \Emb_{\delta }^{%
\mathtt{C}}(X,Y)$ is at most $\varepsilon +\varpi (\delta )$. Then $\alpha
\circ \Emb_{\delta }^{\mathtt{C}}(X,Y)$ is $(\varepsilon +\varpi (\delta ))$%
-monochromatic for $c$. Indeed, fix $\bar{\phi}\in \Emb_{\delta }^{\mathtt{C}%
}(X,Y)$, and let $i:=c(\alpha \circ \bar{\phi})$. Then the $i$-th coordinate
of $f(\alpha \circ \bar{\phi})$ with respect to the canonical basis of $\ell
_{r}^{\infty }$ is zero. Since the oscillation of $f$ on $\alpha \circ \Emb%
_{\delta }^{\mathtt{C}}(X,Y)$ is at most $\varepsilon +\varpi \left( \delta
\right) $, we have that for every $\phi \in \Emb_{\delta }^{\mathtt{C}}(X,Y)$%
, $f\left( \alpha \circ \phi \right) $ and $f\left( \alpha \circ \bar{\phi}%
\right) $ have distance at most $\varepsilon +\varpi \left( \delta \right) $%
. Since the $i$-th coordinate of $f\left( \alpha \circ \phi \right) $ is
equal to $d_{\mathrm{cb}}(\alpha \circ \phi ,c^{-1}(i))$, this implies that $%
\alpha \circ \phi ,c^{-1}(i))\leq \varepsilon +\varpi (\delta )$.

\noindent \textit{(3)} implies \textit{(4)} trivially.

\noindent \textit{(4)} implies \textit{(7)}: It is easy to see that \textit{(4)} and \textit{(7)}, respectively, are equivalent to

\begin{enumerate}[\textit{(4')}]

\item for every $X,Y\in \mathcal{A}$, every compact metric space $(K,d_K)$, $%
\varepsilon>0$ and 1-Lipschitz mapping $c: (\Emb^\mathtt{C}(X,E),d_\mathrm{cb%
})\to (K,d_K)$ there is some $\alpha\in \Aut^\mathtt{C}(E)$ such that $\osc%
(c\upharpoonright ( \alpha\circ \Emb^\mathtt{C}(X,Y)))\le \varepsilon$.

\item[\textit{(7')}] for every $X,Y\in \mathcal{A}$, every compact metric space $(K,d_K)$
and $\varepsilon>0$ there is some $Z\in \langle \mathcal{A}\rangle $ such
that for everyn 1-Lipschitz mapping $c: \Emb^\mathtt{C}(X,Z)\to K$ there is
some $\gamma\in \Emb^\mathtt{C}(Y,Z)$ such that $\osc(c\upharpoonright (
\gamma\circ \Emb^\mathtt{C}(X,Y)))\le \varepsilon$.
\end{enumerate}

This is similarly proved as the equivalence between the discrete (ARP) and
the compact (ARP) (Proposition \ref{ARP=DARP}). Suppose by contradiction
that \textit{(7')} does not hold. Therefore there exist $X, Y\subset E$ in $%
\mathcal{A} $, $\varepsilon _{0}>0$ and a compact metric $(K,d_K)$
witnessing that. Let $D_{X}\subseteq \Emb^{\mathtt{C}}(X,E)$ and $%
D_{Y}\subseteq \Emb^{\mathtt{C}}(Y,E)$ be countable dense subsets, and let $%
(Z_{n})_{n\in \mathbb{N}}$ be an increasing sequence of finite-dimensional $%
Z_{n}\subset E$ in $\langle \mathcal{A } \rangle$, such that $Y\subseteq
Z_{0}$ and such that for every $\phi \in D_{X}$ and $\psi \in D_{Y}$ there
exists $n\in \mathbb{N} $ such that $\phi\in \Emb^\mathtt{C}(X, Z_n)$ and $%
\psi\in \Emb^\mathtt{C}(Y,X_n)$. This implies that $\bigcup_{n}\Emb^{\mathtt{%
C}}(X,Z_{n})$ and $\bigcup_{n}\Emb^{\mathtt{C}}(Y,Z_{n})$ are dense in $\Emb%
^{\mathtt{C}}(X,E)$ and $\Emb^{\mathtt{C}}(Y,E)$, respectively. Choose for
each $n\in \mathbb{N}$ a \textquotedblleft bad 1-Lipschitz coloring $c_{n}:%
\Emb^{\mathtt{C}}(X,Z_{n})\rightarrow K $. Let $\mathcal{U}$ be a
non-principal ultrafilter on $\mathbb{N}$. We define ${c}:\Emb^{\mathtt{C}%
}(X,E)\rightarrow K $ by choosing for a given $\phi \in \Emb^{\mathtt{C}%
}(X,E)$ a sequence $(\phi _{n})_{n}$, each $\phi _{n}\in \Emb^{\mathtt{C}%
}(X,Z_{n})$, such that $\lim_{n}d_{\mathrm{cb}}(\phi ,\phi _{n})=0$, and
then by declaring ${c}(\phi):=\mathcal{U}-\lim {c}_{n}(\phi _{n})$. We claim
that $c$ is well defined. Indeed, if $(\psi _{n})_{n}$ is another sequence
such that $\lim_{n}d_{\mathrm{cb}}(\psi _{n},\phi )=0$, then $\lim_{n}d_{%
\mathrm{cb}}(\psi _{n},\phi _{n})=0$. Using the fact that each ${c}_{n}$ is $%
1$-Lipschitz, this implies that $\lim_{n}\left\vert{c}_{n}(\psi _{n})-{c}%
_{n}(\phi _{n})\right\vert_\infty =0$. Since $\mathcal{U}$ is nonprincipal,
we conclude that $\mathcal{U}-\lim {c}_{n}(\phi _{n})=\mathcal{U}-\lim {c}%
_{n}(\psi _{n})$. Let $\varepsilon >0$ be arbitrary. By the assumption 
\textit{(4')} there exists $\alpha \in \Aut^{\mathtt{C}}(E)$ such that $\osc%
(c\upharpoonright ( \alpha \circ \Emb^\mathtt{C}(X,Y) ) \le \varepsilon$.
Let $D\subseteq \Emb_{\delta }^{\mathtt{C}}(X,Y)$ be a finite $\varepsilon $%
-dense subset. Using the definition of $c$ and the fact that $c$ stabilizes
on $\alpha \circ \Emb_{\delta }^{\mathtt{C}}(X,Y))$, choose $n\in \mathbb{N}$
such that:

\begin{enumerate}[$\bullet$]

\item for every $\phi \in D$ there exists $\bar{\phi}\in \Emb^{\mathtt{C}%
}(X,Z_{n})$ such that $d_{\mathrm{cb}}(\alpha \circ \phi ,\bar{\phi})\leq
\varepsilon $, and such that for every $\phi ,\psi \in D$ one has that $%
\left\vert c_{n}(\bar{\phi})-c_{n}(\bar{\psi})\right\vert \leq \varepsilon$.

\item there is $\gamma \in \Emb^{\mathtt{C}}(Y,Z_{n})$ such that $d_{\mathrm{%
cb}}(\alpha \upharpoonright {Y},\gamma )\leq \varepsilon $.
\end{enumerate}

It follows that $d_{\mathrm{cb}}(\bar{\phi},\gamma \circ \phi )\leq
2\varepsilon $ for every $\phi \in D$. Hence, $\left\vert c_{n}(\gamma \circ
\phi )-c_{n}(\gamma \circ \psi )\right\vert \leq 5\varepsilon +\varpi
(\delta )$ for every $\phi ,\psi \in D$. Consequently, $c_{n}$ has
oscillation at most $7\varepsilon +\varpi (\delta )$ on $\gamma \circ \Emb%
_{\delta }^{\mathtt{C}}(X,Y)$. Since $\varepsilon $ is an arbitrary positive
real number, this contradicts the assumption that $c_{n}$ is a
\textquotedblleft bad\textquotedblright\ coloring witnessing the failure of 
\textit{(7')}.

\noindent The equivalence of \textit{(5)} --\textit{(7)} is done in
Proposition \ref{SRP=cSRP}. 

\noindent \textit{(7)} implies \textit{(4)}: Fix $X\subset Y\subset E$
that belong to $\mathcal{A}$, $\varepsilon >0$, and $r\in \mathbb{N}$. Let $%
Z\in \langle \mathcal{A}\rangle $ be witnessing that \textit{(7)} holds for
the given data. Since $E$ is universal for $\langle \mathcal{A}\rangle $, we
may assume that $Z$ is a substructure of $E$. Given an $r$-coloring $c$ of $%
\Emb^{\mathtt{C}}(X,E)$, we can take its restriction to $\Emb^{\mathtt{C}%
}(X,Z)$, and then find $\gamma \in \Emb^{\mathtt{C}}(Y,Z)$ such that $\gamma
\circ \Emb^{\mathtt{C}}(X,Y)$ is $\varepsilon $-monochromatic for $c$.
Finally, let $\alpha \in \Aut^{\mathtt{C}}(E)$ be such that $d_{\mathrm{cb}%
}(\alpha \upharpoonright {Y},\gamma )\leq \varepsilon $. It follows that $c$ 
$2\varepsilon $-stabilizes on $\alpha \circ \Emb^{\mathtt{C}}(X,Y)$.
\end{proof}

\subsubsection{The Dual Ramsey Theorem}

The proof of the (ARP) properties of several classes of $R$-operator spaces
and systems is based on the \emph{Dual Ramsey Theorem }(DRT) of R. L. Graham
and B. L. Rothschild \cite{graham_ramseys_1971}. Its statement is commonly
presented by using partitions, but for practical purposes, we recall it in
terms of rigid surjections between finite linear orderings. Given two linear
orderings $(R,<_{R})$ and $(S,<_{S})$, an onto map $f:R\rightarrow S$ is a 
\emph{rigid surjection} when $\min_R f^{-1}(s_{0})<\min_R f^{-1}(s_{1})$ for
every $s_{0},s_{1}\in S$ such that $s_{0}<_{S}s_{1}$. The class of rigid
surjections from $R$ onto $S$ is denoted by $\mathrm{Epi}(R,S)$.

\begin{theorem}[(DRT) \protect\cite{graham_ramseys_1971}]
\label{Theorem:DLT-rigid}For every finite linear orderings $R$ and $S$ and
every $r\in \mathbb{N}$ there exists an integer $n\in \mathbb{N}$ such that,
considering $n $ naturally ordered, every $r$-coloring of $\mathrm{Epi}(n,R)$
has a monochromatic set of the form $\mathrm{Epi}(S,R)\circ \gamma =\{{%
\sigma \circ \gamma }\,:\,{\sigma \in \mathrm{Epi}(S,R)}\}$ for some $\gamma
\in \mathrm{Epi}(n,S)$.
\end{theorem}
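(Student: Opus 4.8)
This is the Dual Ramsey Theorem of Graham and Rothschild, so the plan is not to invent a new argument but to recall the shape of the classical proof and its one essential input, the Hales--Jewett theorem. The first step is a translation. A rigid surjection $f\colon n\to R$ with $|R|=k$ is the same datum as an ordered partition of $\{0,1,\dots,n-1\}$ into $k$ nonempty blocks listed by increasing least element, the rigidity clause $\min f^{-1}(s_{0})<\min f^{-1}(s_{1})$ for $s_{0}<_{R}s_{1}$ being exactly what makes this correspondence a bijection. Under this dictionary, for $\sigma\in\mathrm{Epi}(S,R)$ and $\gamma\in\mathrm{Epi}(n,S)$ the composite $\sigma\circ\gamma$ is the ordered partition obtained from $\gamma$ by amalgamating its $|S|$ blocks into $|R|$ consecutive groups, so $\mathrm{Epi}(S,R)\circ\gamma$ is precisely the family of all $|R|$-block coarsenings of the fixed ordered $|S|$-partition $\gamma$. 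In this form the theorem says that, for $n$ large enough, every $r$-colouring of the ordered $|R|$-partitions of $\{0,\dots,n-1\}$ is constant on the set of $|R|$-block coarsenings of some ordered $|S|$-partition; equivalently, the class of finite linear orders with rigid surjections as morphisms is a Ramsey category.

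I would first dispose of the degenerate cases ($|R|>|S|$ makes $\mathrm{Epi}(S,R)$, hence the prescribed monochromatic set, empty, while $|R|=|S|$ makes it a singleton), and I may assume $R$ and $S$ are initial segments of $\mathbb{N}$, so the real content is $|R|<|S|$. Then I would argue by induction on $k=|R|$, carried out inside the more general Graham--Rothschild parameter-set framework: the objects are the $k$-parameter words of length $n$ over an arbitrary finite alphabet $\Lambda$, composition of such words plays the role of $\sigma\circ\gamma$, and the present statement is the instance $\Lambda=\emptyset$. The base of the induction is the assertion that any finite colouring of $\Lambda^{n}$, for $n$ large, admits a monochromatic combinatorial subspace of prescribed dimension; this is iterated Hales--Jewett, and for $\Lambda=\emptyset$ (as well as for $k\le 1$) it is trivial. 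For the step $k\to k+1$ one again uses the Hales--Jewett theorem: singling out the coordinate pattern of the block mapped to the top element, one encodes a $(k+1)$-parameter word as that pattern together with a $k$-parameter word over an enlarged alphabet, transfers the colouring to the $k$-parameter words, applies the inductive hypothesis there, and uses a Hales--Jewett line (iterated, so that a higher-parameter subspace appears as a combinatorial line of lower-parameter ones) to fix the distinguished pattern. Pulling back, and bookkeeping the growth of $|R|,|S|,r$, produces the required $\mathrm{Epi}(S,R)\circ\gamma$.

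The genuine obstacle is exactly this inductive step: organising the encoding so that every map that appears stays rigid (first-occurrence ordered), and so that the Hales--Jewett monochromatic line matches an honest coarsening family of ordered partitions rather than an arbitrary subfamily. This bookkeeping is the whole difficulty of the Graham--Rothschild parameter-set theorem, of which the statement above is the empty-alphabet instance; for a complete and careful proof I would simply cite \cite{graham_ramseys_1971}, and for the self-contained derivation from the Hales--Jewett theorem I would refer to the standard expositions of Ramsey theory. Since only the stated form is needed in the sequel, nothing sharper is required here.
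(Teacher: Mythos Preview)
The paper does not prove this theorem at all: it is stated as the Dual Ramsey Theorem with a citation to \cite{graham_ramseys_1971} and then used as a black box in the proofs of Lemma~\ref{Proposition:dual-ramsey-spaces} and Proposition~\ref{Proposition:Ramsey-systems}. Your proposal is therefore already doing more than the paper does, and since you too end by deferring to \cite{graham_ramseys_1971}, your treatment is entirely consistent with the paper's.

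One small inaccuracy in your exposition: it is not true that $\sigma\circ\gamma$ amalgamates the $|S|$ blocks of $\gamma$ into $|R|$ \emph{consecutive} groups. A rigid surjection $\sigma\colon S\to R$ only requires that the minima of the fibres increase with the target element; the fibres themselves need not be intervals of $S$ (e.g.\ $\sigma\colon\{1,2,3,4\}\to\{a,b\}$ with $\sigma(1)=\sigma(3)=a$, $\sigma(2)=\sigma(4)=b$ is rigid). So $\mathrm{Epi}(S,R)\circ\gamma$ consists of all ordered $|R|$-block coarsenings of $\gamma$, not merely those obtained by merging adjacent blocks. This does not affect your outline, since you are only sketching the standard Graham--Rothschild induction via Hales--Jewett and then citing the source, but the description should be corrected if you keep it.
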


\section{The Ramsey property for operator spaces\label{Sec:spaces}}

The aim of this section is to prove the extreme amenability of the
automorphism groups of operator spaces which are Fra\"{\i}ss\'{e} limits of
certain classes of finite-dimensional injective operator spaces. 

\subsection{Fra\"{\i}ss\'{e} limits of exact operator spaces}

Recall that an operator space $E$ is \emph{injective} if it is injective in
the category of operator spaces; that is, whenever $X\subset Y$ are operator
spaces, any completely contractive map $\phi :X\rightarrow E$ can be
extended to a completely contractive map $\psi :Y\rightarrow E$. The
finite-dimensional injective operator spaces are precisely the ones of the
form $M_{q_{1},s_{1}}\oplus _{\infty }\cdots \oplus _{\infty }M_{q_{n},s_{n}}
$ for $n,q_{1},s_{1},\ldots ,q_{n},s_{n}\in \mathbb{N}$. Here, $M_{q,s}$ is
the space of $q\times s$ matrices with complex entries, regarded as a space
of operators on the $\left( q+s\right) $-dimensional Hilbert space of the
form%
\begin{equation*}
\begin{bmatrix}
0 & \ast \\ 
0 & 0%
\end{bmatrix}%
\text{,}
\end{equation*}%
where the diagonal blocks have size $q\times q$ and $s\times s$. The
operator spaces $M_{q,1}$ and $M_{1,q}$ are called the $q$-dimensional \emph{%
column operator Hilbert space} and the $q$-dimensional \emph{row operator
Hilbert space}, respectively. The space $M_{q,q}$ of $q\times q$ matrices
will be simply denoted by $M_{q}$, and the $n$-fold $\infty $-sum of $%
M_{q,s} $ by itself will be denoted by $\ell _{\infty }^{n}(M_{q,s})$. It is
known that the class of finite-dimensional injective operator spaces
coincides with the class of finite-dimensional ternary rings of operator;
see \cite{smith_finite_2000}. The finite-dimensional \emph{commutative }%
ternary rings of operators are precisely the spaces $\ell _{\infty }^{n}$
for $n\in \mathbb{N}$ \cite[Subsection 8.6.4]{blecher_operator_2004}, which
are precisely the finite-dimensional injective minimal operator spaces.

\begin{definition}[Injective classes]
\label{matrix_classes}We say that a family of finite-dimensional operator
spaces is an \emph{injective class} of operator spaces if it is one of the
following families

\begin{enumerate}[-]

\item $\mathbb{I}_{1}:=\{\ell _{\infty }^{n}\}_{n\in \mathbb{N} } $,

\item $\mathbb{I}_{q}:=\{\ell _{\infty }^{n}(M_{q})\}_{n\in \mathbb{N}}$,

\item $\mathbb{I}_{\mathrm{c}}:=\{\ell _{\infty }^{n}(M_{q,1})\}_{n,q\in 
\mathbb{N}} $,

\item $\mathbb{I}_{\mathrm{e}}:=\{M_{q}\}_{q\in \mathbb{N}}$, and

\item $\mathbb{I}_{\mathrm{inj}}=\{M_{q_{1},s_{1}}\oplus _{\infty }\cdots
\oplus _{\infty }M_{q_{n},s_{n}}\mathbb{\}}_{n,q_{1},s_{1},\ldots
,q_{n},s_{n}\in \mathbb{N}} $.
\end{enumerate}
\end{definition}

If $\mathbb{I}$ is any of the classes in Definition \ref{matrix_classes},
then $\mathbb{I}$ satisfies the assumptions of Theorem \ref%
{injective_classes_are_fraisse}. Thus,   $\mathbb{I}$ is a stable amalgamation class and $\langle 
\mathbb{I}\rangle$ is a stable Fra\"{\i}ss\'{e} class with modulus $\varpi
\left( \delta \right) =\delta$. The corresponding classes $[\mathbb{I}]$ can
be described as follows.

\begin{definition}[Spaces locally approximated by injective classes]
\label{Definition:exact} \mbox{}

\begin{enumerate}[(a)]

\item $[\mathbb{I}_{1}]$ is the class of \emph{minimal operator spaces},
which can be identified with\ the class of \emph{Banach spaces}.

\item $[\mathbb{I}_q]$ is the class of $q$-\emph{minimal operator spaces}
(see \cite{lehner_mn-espaces_1997}).

\item $[\mathbb{I}_\mathrm{c}]$ is the class of \emph{operator sequence
spaces} (see \cite{lambert_operatorfolgenraume_2002}).

\item $[\mathbb{I}_{\mathrm{e}}]=\left[ \mathbb{I}_{\mathrm{inj}}\right] $
is the class of \emph{exact operator spaces} (see \cite{pisier_exact_1995}, 
\cite[Theorem 17.1]{pisier_introduction_2003}).
\end{enumerate}
\end{definition}

Observe that Banach spaces (endowed with their minimal operator space
structure) coincide with $1$-minimal operator spaces. Their limits are the
following.

\begin{example}
\label{Definition:NG} \mbox{}

\begin{enumerate}[$\bullet$]

\item $\flim\mathbb{I}_1 $ is the \emph{Gurarij space} $\mathbb{G}$ \cite%
{gurarij_spaces_1966,lusky_gurarij_1976,kubis_proof_2013,ben_yaacov_fraisse_2015}%
.

\item $\flim \mathbb{I}_q$ is the \emph{$q$-minimal Gurarij space} $\mathbb{%
G }_{q}$ \cite[\S 6.5]{lupini_fraisse_2018}.

\item $\flim \mathbb{I}_\mathrm{c}$ is the \emph{Gurarij column space} $%
\mathbb{GC}$ \cite[\S 6.3]{lupini_fraisse_2018}.

\item $\flim \mathbb{I}_\mathrm{e}$ is the \emph{noncommutative Gurarij
space } $\mathbb{NG}$ \cite[\S 8.1]{lupini_fraisse_2018}.
\end{enumerate}
\end{example}

Observe that the Gurarij space $\mathbb{G}$ coincides with $\mathbb{G}_{1}$
in the notation of Example \ref{Definition:NG}. In fact, the original
definition of the Gurarij space considered by Gurarij \cite%
{gurarij_spaces_1966} and Lusky \cite%
{lusky_gurarij_1976,lusky_separable_1977,lusky_construction_1979} looks
somewhat different. The original characterization of $\mathbb{G}$ is as the
unique separable Banach space satisfying the following extension property:
for every finite-dimensional\ Banach spaces $E\subseteq F$, linear
contraction $\phi :E\rightarrow \mathbb{G}$, and $\varepsilon >0$, there
exists an extension $\hat{\phi}:F\rightarrow \mathbb{G}$ satisfying $||\hat{%
\phi}||{}<1+\varepsilon $. The fact that such a definition is equivalent to
the one given in Example \ref{Definition:NG} is proved in \cite%
{ben_yaacov_fraisse_2015}. Similarly, the original notion of noncommutative
Gurarij space considered by Oikhberg in \cite{oikhberg_non-commutative_2006}
and proved to be unique in \cite{lupini_uniqueness_2016} is the following: a
separable exact operator space $\mathbb{\mathbb{NG}}$ such that for every
finite-dimensional exact operator spaces $E\subseteq F$, linear complete
contraction $\phi :E\rightarrow \mathbb{NG}$, and $\varepsilon >0$, there
exists an extension $\hat{\phi}:F\rightarrow \mathbb{G}$ satisfying $||\hat{%
\phi}||_{\mathrm{cb}}{}<1+\varepsilon $. The fact that such a definition is
equivalent to the one given in Example \ref{Definition:NG} is established in 
\cite{lupini_uniqueness_2016}.

\subsection{The approximate Ramsey property and extreme amenability}

\label{Subs:ARP} Fix $q,s\in \mathbb{N}$. The goal of this part is to prove
that the classes $\mathbb{I}$ of finite-dimensional exact operator spaces
introduced above satisfy the stable Ramsey property with modulus $\varpi
\left( \delta \right) =\delta $. Since $\mathbb{I}$ and its generated class $%
\langle \mathbb{I}\rangle $ are both stable Fra\"{i}ss\'{e} classes, Proposition \ref{SRP=cSRP} tell us that it
suffices to prove the discrete approximate Ramsey property of each $\mathbb{I%
}$.

\begin{lemma}[ARP of $\{\ell_\infty^d(M_{q,s})\}_{d\in \mathbb{N}}$]
\label{Proposition:direct-ramsey-spaces}Fix $q,s\in \mathbb{N}$. For every $%
d,m,r\in \mathbb{N}$ and $\varepsilon >0$ there exists $n\in \mathbb{N}$
such that every $r$-coloring of $\mathrm{\mathrm{Emb}}( \ell ^{d}_{\infty }(
M_{q,s}) ,\ell^{n}_{\infty }( M_{q,s}) ) $ has an $\varepsilon$%
-monochromatic set of the form $\gamma \circ \Emb( \ell^{d}_{\infty }(
M_{q,s}) ,\ell ^{m}_{\infty }( M_{q,s}) ) $ for some $\gamma \in \mathrm{%
\mathrm{Emb}}( \ell^{m}_{\infty }( M_{q,s}) ,\ell^{n}_{\infty }( M_{q,s}) ) $%
.
\end{lemma}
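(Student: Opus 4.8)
The plan is to adapt the argument of \cite{bartosova_2019} for the commutative classes $\{\ell_\infty^{d}\}$, the genuinely new ingredient being that the finite sign group appearing there is replaced by the compact group $G_{q,s}$ of surjective complete isometries of the block $M_{q,s}$ --- acting by two-sided multiplication by unitaries, $x\mapsto u^{*}xv$ with $u\in U(q)$, $v\in U(s)$ --- which will be dealt with by a discretization. The first step is a combinatorial description of $\Emb(\ell_\infty^{d}(M_{q,s}),\ell_\infty^{n}(M_{q,s}))$. Writing a completely contractive $\phi$ in coordinates as $\phi=(\psi_i)_{i\le n}$ with each $\psi_i\colon\ell_\infty^{d}(M_{q,s})\to M_{q,s}$ a complete contraction, one uses the theory of rectangular matrix convex sets recalled in Section~\ref{jio3i4jorwe3223} (equivalently the structure of finite-dimensional injective operator spaces) to see that the rectangular-matrix-extreme coordinates are exactly those of the form $x\mapsto u^{*}x_j v$ with $u\in U(q)$, $v\in U(s)$, $j\le d$, that every other $\psi_i$ is a rectangular matrix convex combination of such, and that $\phi$ is a complete isometric embedding precisely when, for each $j\le d$, some coordinate is of that extreme form with index $j$. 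One also records that $\Aut(\ell_\infty^{n}(M_{q,s}))=S_n\ltimes G_{q,s}^{\,n}$, acting by permuting coordinates and acting coordinatewise by $G_{q,s}$, and the resulting substitution formula for the composition $\gamma\circ\phi$ of two such embeddings.

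Second, I would discretize. Fix $\delta>0$, to be chosen a small multiple of $\varepsilon$ at the end. Choose a finite $\delta$-net $D\subseteq G_{q,s}$ and a finite $\delta$-net $\mathcal C$ of the unit ball of the space of complete contractions $\ell_\infty^{d}(M_{q,s})\to M_{q,s}$, which is a compact metric space since $\ell_\infty^{d}(M_{q,s})$ is finite dimensional. Because every coordinate operation above acts coordinate by coordinate, replacing the data of an embedding by the nearest net points changes it by at most $O(\delta)$ in $d_{\mathrm{cb}}$, uniformly in $n$, and the result is still an embedding. Hence it suffices to control the given $r$-coloring on the finite set of ``$D$--$\mathcal C$--discretized'' embeddings, at the cost of replacing $\varepsilon$ by $\varepsilon+O(\delta)$; once a suitable $\gamma$ is produced for the discretized embeddings, a final appeal to net density upgrades the conclusion to all of $\gamma\circ\Emb(\ell_\infty^{d}(M_{q,s}),\ell_\infty^{m}(M_{q,s}))$ at the cost of another $O(\delta)$.

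Third, I would encode the finite problem and invoke the Dual Ramsey Theorem. A discretized embedding $\ell_\infty^{d}(M_{q,s})\to\ell_\infty^{n}(M_{q,s})$ is coded by a function $g\colon\{1,\dots,n\}\to P$, where $P:=(\{1,\dots,d\}\times D)\sqcup\mathcal C$ is a fixed finite set (once $\delta$ is fixed), subject to the requirement that $g$ meets $\{j\}\times D$ for every $j\le d$. It suffices to look for $\gamma$ of the special ``trivially decorated, rigid'' form, in which each coordinate is a plain coordinate projection arranged according to a rigid surjection $h\colon\{1,\dots,n\}\twoheadrightarrow\{1,\dots,m\}$; for such $\gamma$ the set $\gamma\circ\{\text{discretized embeddings }\ell_\infty^{d}(M_{q,s})\to\ell_\infty^{m}(M_{q,s})\}$ corresponds, under the coding, to $\{q\circ h : q\colon\{1,\dots,m\}\to P \text{ meeting every column}\}$. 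One then applies Theorem~\ref{Theorem:DLT-rigid} with $R:=P$ and $S:=\{1,\dots,m\}$ (each suitably linearly ordered), having first inflated the number of colors so as to absorb two discrepancies between ``$q$ meets every column'' and ``$q$ is a rigid surjection onto the whole of $P$'': the choice of the image $T\subseteq P$ of $q$ (finitely many possibilities, each with $\pi(T)=\{1,\dots,d\}$ for the projection $\pi\colon P\to\{1,\dots,d\}$), and the choice among the $|T|!$ orderings of that image. Unwinding the resulting monochromatic set $\mathrm{Epi}(S,R)\circ h$ back through the coding and through the composition formula from the first step shows that $\gamma\circ\Emb(\ell_\infty^{d}(M_{q,s}),\ell_\infty^{m}(M_{q,s}))$ is $\varepsilon$-monochromatic.

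The main obstacle is precisely this encoding. Complete isometric embeddings between $\infty$-sums of copies of $M_{q,s}$ are strictly more complicated than rigid surjections: they carry ``non-extreme'' coordinates (the $\mathcal C$-part --- already present in the commutative case $q=s=1$), they carry decorations from the compact group $G_{q,s}$ (reduced to a finite alphabet only after discretization), and they need be neither rigid nor surjective onto the ambient alphabet. Matching $\gamma\circ\Emb(\ell_\infty^{d}(M_{q,s}),\ell_\infty^{m}(M_{q,s}))$ with an output $\mathrm{Epi}(S,R)\circ h$ of the Dual Ramsey Theorem --- that is, checking that, after discretization, a single product-application of Theorem~\ref{Theorem:DLT-rigid} controls all of it --- is where the care lies; I expect this to be handled by normalizing via coordinate-permutation automorphisms of the target, by splitting over images and their orderings to pass to the rigid-surjection form, and by folding the finite decoration and ``garbage'' data into the alphabet and the color count of the Dual Ramsey Theorem.
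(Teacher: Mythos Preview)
Your structural analysis is right---the coordinate description of complete isometric embeddings, the role of the compact group $G_{q,s}$, and the discretization step are all correct and match the paper's setup (which the paper carries out in the dual picture, working with completely contractive quotient mappings $\ell_1^n(T_{s,q})\to\ell_1^d(T_{s,q})$ instead of embeddings). The gap is in the Dual Ramsey encoding.

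With your choice $R=P$ and $S=\{1,\dots,m\}$, the Dual Ramsey Theorem produces $h\in\mathrm{Epi}(n,m)$ such that $\{q\circ h:q\in\mathrm{Epi}(m,P)\}$ is monochromatic. But the set you need to control is $\{q\circ h:q\colon m\to P\text{ meets every column}\}$, which is strictly larger: a generic $q$ is neither onto the full $P$ nor rigid with respect to the fixed order on $P$. Your ``inflate the colors'' fix---iterating over the finitely many pairs $(T,<_T)$ of image and induced order---does not close the gap. If you run the iteration, you obtain an $h$ for which $c(q\circ h)$ depends only on the type $(T,<_T)$ of $q$; but the colors $j_{(T,<_T)}$ obtained at different stages need not agree, so the set is not $\varepsilon$-monochromatic. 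Precomposing by further rigid surjections $m'\to m$ preserves the image of $q$ and hence its type, so no further refinement collapses the type-dependence.

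The paper resolves this by taking a \emph{richer} intermediate alphabet. In the dual picture it sets $R=\mathcal{P}$ (a net of columns, ordered by $\|\cdot\|_{\mathrm{cb}}$) and $S=\mathcal{Q}\times\mathcal{P}$, where $\mathcal{Q}$ is a finite set of ``permutation-type'' isometries $\ell_1^d(T_{s,q})\to\ell_1^m(T_{s,q})$ built from a net $U$ in $\mathrm{Aut}(T_{s,q})$. The monochromatic object from the Dual Ramsey Theorem is then an $(\bar B,\bar w)\in\mathrm{Epi}(n,\mathcal{Q}\times\mathcal{P})$, and one takes $\gamma:=\alpha_{(\bar B,\bar w)}$. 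The crucial step you are missing is the claim that for \emph{every} $\rho\in\mathrm{CQ}(\ell_1^m(T_{s,q}),\ell_1^d(T_{s,q}))$ there is a single rigid surjection $\tau\colon\mathcal{Q}\times\mathcal{P}\to\mathcal{P}$ with $\|\alpha_{\tau\circ(\bar B,\bar w)}-\rho\circ\alpha_{(\bar B,\bar w)}\|_{\mathrm{cb}}\le\varepsilon$. This $\tau$ is built by choosing $A^{\dagger}\in\mathcal{Q}$ with $AA^{\dagger}\approx\mathrm{Id}$ (possible by the structure of complete quotients), setting $\tau(A^{\dagger},w)=w$, and for other $(B,w)$ choosing $\tau(B,w)\in\mathcal{P}$ close to $ABw$ with strictly smaller $\mathrm{cb}$-norm; the norm ordering on $\mathcal{P}$ then forces $(A^{\dagger},w)$ to be the minimum of each fiber, giving rigidity. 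This ``right-inverse plus norm-ordered alphabet'' trick is the idea your proposal does not contain, and it is exactly what lets a \emph{single} application of the Dual Ramsey Theorem control all of $\rho\circ\gamma$ rather than only those $\rho$ whose discretization happens to be a rigid surjection onto the full alphabet.
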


Similarly as it was done for the proof of the (ARP) of the class of Banach
spaces $\{\ell_\infty^n\}_n$ in \cite{bartosova_2019}, rather than proving
Lemma \ref{Proposition:direct-ramsey-spaces} directly, we will establish its
natural \emph{dual statement}, which is Lemma \ref%
{Proposition:dual-ramsey-spaces} below. Given two operator spaces $X$ and $Y$%
, let $\mathrm{C\mathrm{Q}}(X,Y)$ be the set of \emph{completely contractive
quotient} mappings $\phi :X\rightarrow Y$, i.e.\ $\phi :X\rightarrow Y$ such
that each amplification $\phi ^{(n)}$ is a contractive quotient mapping \cite%
[\S 2.2]{effros_operator_2000}. Notice that this is equivalent to the
assertion that the dual map $\phi ^{\ast }:Y^{\ast }\rightarrow X^{\ast }$
is a completely isometric embedding. We denote by $T_{s,q}$ the operator
space dual of $M_{q,s}$. This can be identified as the space of $s\times q$
matrices with matrix norms given by the normalized Hilbert--Schmidt norm 
\cite[\S 1.2]{effros_operator_2000}. The duality between $M_{q,s}$ and $%
T_{s,q}$ is implemented by the paring $(\alpha ,\beta )\mapsto \mathrm{Tr}%
(\alpha \beta )$, there $\mathrm{Tr}$ denotes the normalized trace for $%
s\times s$ matrices. One can then canonically identify the operator space
dual of $\ell _{\infty }^{d}(M_{q,s})$ with the $1$-sum $\ell
_{1}^{d}(T_{s,q})$ of $d$ copies of $T_{s,q}$. From this it is easy to see
that Lemma \ref{Proposition:direct-ramsey-spaces} and Lemma \ref%
{Proposition:dual-ramsey-spaces} below are equivalent by duality.

\begin{lemma}
\label{Proposition:dual-ramsey-spaces}Fix $q,s\in \mathbb{N}$. For every $%
d,m,r\in \mathbb{N}$ and $\varepsilon >0$ there exists $n\in \mathbb{N}$
such that every $r$-coloring of $\mathrm{CQ}( \ell ^{n}_{1}( T_{s,q}) ,\ell
^{d}_{1}( T_{s,q}) ) $ has an $\varepsilon $-monochromatic set of the form $%
\mathrm{CQ}( \ell ^{m}_{1}( T_{s,q}) ,\ell^{d}_{1}( T_{s,q}) ) \circ \gamma $
for some $\gamma \in \mathrm{CQ}( \ell ^{n}_{1}( T_{s,q}) ,\ell ^{m}_{1}(
T_{s,q}) ) $.
\end{lemma}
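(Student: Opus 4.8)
The plan is to prove the dual statement, Lemma~\ref{Proposition:dual-ramsey-spaces}, which by the duality between $\ell^{k}_{\infty}(M_{q,s})$ and $\ell^{k}_{1}(T_{s,q})$ recalled above is equivalent to Lemma~\ref{Proposition:direct-ramsey-spaces}, and to deduce it from the Dual Ramsey Theorem (Theorem~\ref{Theorem:DLT-rigid}), following the blueprint of the Banach-space class $\{\ell^{n}_{\infty}\}_{n}$ treated in \cite{bartosova_2019}. The new feature is that the $1$-dimensional building block is now the matrix space $T_{s,q}$ rather than a scalar line, so the combinatorial skeleton of a completely contractive quotient will be a rigid surjection \emph{decorated} by elements of the compact group $G:=\mathrm{Aut}(T_{s,q})\cong\mathrm{Aut}(M_{q,s})$ (nontrivial as soon as $(q,s)\neq(1,1)$, and already the circle when $(q,s)=(1,1)$ over $\mathbb{C}$), together with some bounded residual data.

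First I would give a combinatorial description of $\mathrm{CQ}(\ell^{n}_{1}(T_{s,q}),\ell^{d}_{1}(T_{s,q}))$. Dualizing, $\phi\mapsto\phi^{\ast}$ identifies it with the set of complete isometries $\ell^{d}_{\infty}(M_{q,s})\to\ell^{n}_{\infty}(M_{q,s})$; concretely $\phi$ is determined by the tuple of its $n$ blocks $\phi_{i}\colon T_{s,q}\to\ell^{d}_{1}(T_{s,q})$, each completely contractive, subject to a joint metric-quotient condition. Exploiting injectivity of $M_{q,s}$ and the extreme-point structure of $\mathrm{Ball}(\ell^{d}_{1}(T_{s,q}))$, one attaches to $\phi$ a surjection $f\colon[n]\twoheadrightarrow[d]$ recording, for each $i$, the block carrying most of the mass of $\phi_{i}$, while the remaining information — the $G$-twist on the nearly pure coordinates and the bounded mixing weights on the others — ranges over a compact set. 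Composition of such maps corresponds to composing the underlying surjections and composing the decorations barycentrically; this is precisely the operation the Dual Ramsey Theorem is built to control. Since coordinate permutations of the $\ell^{k}_{1}(T_{s,q})$ are automorphisms, one reduces, as in the Banach case, to rigid surjections, i.e.\ to morphisms whose skeleton lies in $\mathrm{Epi}(\cdot,\cdot)$.

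Next, since $G$ and the other bounded parameters range over compact metric spaces, I would fix a small $\rho=\rho(\varepsilon)>0$, partition each of these spaces into finitely many pieces of diameter $<\rho$ with chosen representatives, and observe that snapping the data of a morphism to representatives perturbs it by $O(\rho)$ in $d_{\mathrm{cb}}$. This reduces everything to stabilising an $r$-colouring on the finite set of discretised quotient maps, parametrised by pairs $(f,\bar\tau)$ with $f$ a rigid surjection onto $[d]$ and $\bar\tau$ a word over a finite alphabet $\Sigma$. The required combinatorial input is then a $\Sigma$-decorated Dual Ramsey Theorem: for $d\le m$, finite $\Sigma$, and $r\in\mathbb{N}$ there is $n\in\mathbb{N}$ such that every $r$-colouring of $\mathrm{Epi}(n,d)\times\Sigma^{n}$ is constant on a set of the form $\{(g\circ h,\bar\tau\circ h):g\in\mathrm{Epi}(m,d),\ \bar\tau\in\Sigma^{m}\}$ for some $h\in\mathrm{Epi}(n,m)$ (here $\bar\tau\circ h:=(\tau_{h(i)})_{i\le n}$). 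This is deduced from Theorem~\ref{Theorem:DLT-rigid} by the standard device of encoding the $\Sigma$-labels into an auxiliary linear order and then pigeonholing over an inflated, still finite, set of colours — the same move by which the signed Dual Ramsey Theorem is obtained, and by which the finitely many coordinate permutations of the target $[d]$ are absorbed, in \cite{bartosova_2019}. Feeding the resulting $n$ back through the discretisation and letting $\gamma\in\mathrm{CQ}(\ell^{n}_{1}(T_{s,q}),\ell^{m}_{1}(T_{s,q}))$ be the quotient map whose skeleton is $h$ with trivial decoration, one obtains that $\mathrm{CQ}(\ell^{m}_{1}(T_{s,q}),\ell^{d}_{1}(T_{s,q}))\circ\gamma$ is contained in an $\varepsilon$-neighbourhood of a single colour class of $c$, the desired $\varepsilon$-monochromatic set.

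The main obstacle, and the step I expect to occupy most of the work, is the structural analysis above: making precise, with explicit moduli, how a completely contractive quotient between the $\ell^{k}_{1}(T_{s,q})$ splits into a rigid surjection plus compact parameters, reconciling the fact that such quotients compose along arbitrary (not merely rigid) surjections with the Dual Ramsey Theorem, which natively handles only rigid ones, and verifying that the positive-dimensional compact group $G$ can be replaced by a finite alphabet without loss of Ramsey content. It is exactly this passage through a finite net that forces the conclusion to be an $\varepsilon$-monochromatic rather than a genuinely monochromatic set, and that makes the stable Ramsey framework of Subsection~\ref{Subs:ARP} the natural setting; with the structural lemma in hand, the Dual Ramsey Theorem supplies the combinatorial heart of the argument.
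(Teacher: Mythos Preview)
Your overall strategy---discretize, then apply the Dual Ramsey Theorem---is the right one, and matches the paper's. But the specific combinatorial reduction you propose does not work: the ``$\Sigma$-decorated Dual Ramsey Theorem'' you state is false whenever $|\Sigma|\geq 2$. Colour $(f,\bar\sigma)\in\mathrm{Epi}(n,d)\times\Sigma^{n}$ by $\bar\sigma_{1}$. For any rigid $h\in\mathrm{Epi}(n,m)$ one has $h(1)=1$, so $(\bar\tau\circ h)_{1}=\bar\tau_{1}$, and this takes every value in $\Sigma$ as $\bar\tau$ varies over $\Sigma^{m}$; no set of the required form is monochromatic. This is not a technicality that ``encoding the $\Sigma$-labels into an auxiliary linear order'' will repair: any $\gamma$ built from a rigid $h$ with trivial decoration has a fixed first column, and precomposing with an arbitrary $\rho\in\mathrm{CQ}(\ell_{1}^{m}(T_{s,q}),\ell_{1}^{d}(T_{s,q}))$ places the first column of $\rho$---which is an \emph{arbitrary} complete contraction $T_{s,q}\to\ell_{1}^{d}(T_{s,q})$---as the first column of $\rho\circ\gamma$. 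A colouring that reads off this first column cannot be stabilised in your scheme. The decomposition into ``skeleton $f:[n]\twoheadrightarrow[d]$ plus compact decoration'' is the wrong parametrisation.

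The paper's fix is to abandon the skeleton map to $[d]$ altogether and instead take the \emph{entire} discretised column as the value: one chooses a finite $\varepsilon$-net $\mathcal{P}$ in the compact set of complete contractions $T_{s,q}\to\ell_{1}^{d}(T_{s,q})$, ordered so that $\phi<\phi'$ whenever $\|\phi\|_{\mathrm{cb}}<\|\phi'\|_{\mathrm{cb}}$, and applies the Dual Ramsey Theorem directly to $\mathrm{Epi}(n,\mathcal{P})$. The intermediate object is $\mathrm{Epi}(n,\mathcal{Q}\times\mathcal{P})$, where $\mathcal{Q}$ is a finite set of ``permutation-like'' complete isometries $\ell_{1}^{d}(T_{s,q})\to\ell_{1}^{m}(T_{s,q})$ built from a net $U$ in $\mathrm{Aut}(T_{s,q})$. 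The step you correctly flag as the hard one---handling an arbitrary $\rho$ with representing matrix $A$---is done by the following device: from the row condition on $A$ (Lemma~\ref{Lemma:injective}) one extracts an approximate right inverse $A^{\dagger}\in\mathcal{Q}$ with $\|AA^{\dagger}-\mathrm{Id}\|_{\mathrm{cb}}\leq\varepsilon$, and defines $\tau:\mathcal{Q}\times\mathcal{P}\to\mathcal{P}$ by $\tau(A^{\dagger},w)=w$ and otherwise $\tau(B,w)\in\mathcal{P}$ close to $ABw$ with \emph{strictly smaller} $\mathrm{cb}$-norm. The norm ordering on $\mathcal{P}$ then forces $(A^{\dagger},w)$ to be the minimum of each fibre $\tau^{-1}(w)$, so $\tau$ is rigid. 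This pseudo-inverse trick, together with the norm ordering, is precisely the missing idea that makes rigidity compatible with composition by an arbitrary $\rho$.
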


In order to prove Lemma \ref{Proposition:dual-ramsey-spaces} we will need
the following fact about linear complete isometries, which is an immediate
consequence of \cite[Lemma 5.17]{lupini_uniqueness_2016}; see also \cite[%
Lemma 3.6]{eckhardt_perturbations_2010}.

\begin{lemma}
\label{Lemma:injective} Suppose that $q,s,q_{1},s_{1},\dots ,q_{n},s_{n}\in 
\mathbb{N}$, and $\phi _{i}:M_{q,s}\rightarrow M_{q_{i},s_{i}}$ are
completely contractive linear maps for $i=1,2,\ldots ,n$. Then the linear
map $\phi :M_{q,s}\rightarrow M_{q_{1},s_{1}}\oplus _{\infty }\cdots \oplus
_{\infty }M_{q_{n},s_{n}}$, $x\mapsto (\phi _{1}(x),\ldots ,\phi _{n}(x))$
is a complete isometry if and only if $\phi _{i}$ is a complete isometry for
some $i\leq n$.
\end{lemma}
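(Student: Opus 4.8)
The plan is to prove the two implications separately; the forward one is immediate, and the reverse one is where the cited rigidity results for $M_{q,s}$ enter. Throughout I use that $M_k\big(M_{q_1,s_1}\oplus_\infty\cdots\oplus_\infty M_{q_n,s_n}\big)$ is completely isometric to $M_k(M_{q_1,s_1})\oplus_\infty\cdots\oplus_\infty M_k(M_{q_n,s_n})$, so that $\|\phi^{(k)}(x)\|=\max_{j\le n}\|\phi_j^{(k)}(x)\|$ for every $k\in\mathbb N$ and every $x\in M_k(M_{q,s})$.

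For the ``if'' direction, suppose some $\phi_i$ is a complete isometry. Since each $\phi_j$ is a complete contraction, for all $k$ and all $x\in M_k(M_{q,s})$ we get $\|x\|=\|\phi_i^{(k)}(x)\|\le\max_{j\le n}\|\phi_j^{(k)}(x)\|=\|\phi^{(k)}(x)\|\le\|x\|$, hence $\phi$ is a complete isometry.

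For the ``only if'' direction I would invoke \cite[Lemma 5.17]{lupini_uniqueness_2016} (see also \cite[Lemma 3.6]{eckhardt_perturbations_2010}), used here in the following form: there exist $k\in\mathbb N$ and an element $e$ in the unit ball of $M_k(M_{q,s})$, depending only on $q$ and $s$, such that an arbitrary complete contraction $\psi\colon M_{q,s}\to Z$ into an operator space $Z$ is a complete isometry if and only if $\|\psi^{(k)}(e)\|=1$. (If one only has the quantitative version of these lemmas, namely a bound $\|\psi^{-1}\|_{\mathrm{cb}}\le 1+\eta\big(1-\|\psi^{(k)}(e)\|\big)$ for some modulus $\eta$ continuous and vanishing at $0$, the same conclusion follows upon setting $\|\psi^{(k)}(e)\|=1$.) Granting this, apply the criterion to $\psi=\phi$: since $\phi$ is a complete isometry, $\|\phi^{(k)}(e)\|=1$, and therefore $\max_{j\le n}\|\phi_j^{(k)}(e)\|=1$, so $\|\phi_i^{(k)}(e)\|=1$ for some $i\le n$. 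Applying the criterion again to $\psi=\phi_i\colon M_{q,s}\to M_{q_i,s_i}$ yields that $\phi_i$ is a complete isometry.

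The main obstacle is isolating exactly this statement from \cite[Lemma 5.17]{lupini_uniqueness_2016}: one needs the ``test element'' $e$ to be choosable uniformly in the target (which is fine, $M_{q,s}$ being a fixed finite-dimensional operator space) and, crucially, to be a \emph{single} element rather than a finite family $D$ of norm-one elements---otherwise the index $i$ realizing $\max_{j}\|\phi_j^{(k)}(a)\|=1$ could depend on $a\in D$, and one would need the extra input that a single index works for all $a\in D$ simultaneously. An alternative, more structural route uses that $M_{q,s}$, the $M_{q_i,s_i}$, and their $\infty$-sum are ternary rings of operators and that $M_{q,s}=B(\mathbb C^{s},\mathbb C^{q})$ is a \emph{simple} TRO, so that any ternary morphism out of it is either $0$ or completely isometric; the catch is that a complete isometry out of a TRO need not be a ternary morphism, so this approach first requires passing to the appropriate ``ternary part'' of $\phi$, a reduction of the kind carried out in \cite{lupini_uniqueness_2016}.
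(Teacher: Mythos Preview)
Your proposal is correct and follows the same route as the paper: the paper does not give an argument of its own but simply states that the lemma ``is an immediate consequence of \cite[Lemma 5.17]{lupini_uniqueness_2016}; see also \cite[Lemma 3.6]{eckhardt_perturbations_2010},'' which is exactly the input you invoke. Your worry about needing a \emph{single} test element rather than a finite family is not a genuine obstacle here: for $M_{q,s}$ the cited results do supply one canonical element $e$ of norm $1$ in some $M_k(M_{q,s})$ (essentially the matrix of matrix units) whose norm is preserved by a complete contraction precisely when that contraction is completely isometric, and this is what makes the deduction ``immediate'' in the paper's phrasing.
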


The proof of Lemma \ref{Proposition:dual-ramsey-spaces} relies on the Dual
Ramsey Theorem \ref{Theorem:DLT-rigid} by Graham and Rothschild.

\begin{proof}[Proof of Lemma \protect\ref{Proposition:dual-ramsey-spaces}]
Fix $d,m,r\in \mathbb{N}$ and $\varepsilon >0$. We identify a linear map $%
\phi $ from $\ell _{1}^{n}(T_{s,q})$ to $\ell _{1}^{d}(T_{s,q})$ with a $%
d\times n$ matrix $\left[ \phi _{ij}\right] $ where $\phi
_{ij}:T_{s,q}\rightarrow T_{s,q}$ is a linear map. It follows from (the dual
of) Lemma \ref{Lemma:injective} that $\phi $ is a completely contractive
quotient mapping if and only if every row of $\left[ \phi _{ij}\right] $ has
an entry that is a surjective complete isometry of $T_{s,q}$, and every
column is a complete contraction from $T_{s,q}$ to $\ell _{1}^{d}(T_{s,q})$.
This implies that if a column has an entry that is a surjective complete
isometry of $T_{s,q}$, then all the other entries of the column are zero.
Let now $\mathcal{P}$ be a finite set of complete contractions from $T_{s,q}$
to $\ell _{1}^{d}(T_{s,q})$---which we regard as $d$-dimensional column
vectors with entries from $T_{s,q}$---with the following properties:

\begin{enumerate}[(i)]

\item the zero map belongs to $\mathcal{P}$;

\item for every $i\leq d$ the canonical embedding of $T_{s,q}$ into the $i$%
-th coordinate of $\ell _{1}^{d}(T_{s,q})$ belongs to $\mathcal{P}$;

\item for every nonzero complete contraction $\phi :T_{s,q}\rightarrow \ell
_{1}^{d}(T_{s,q})$ there exists a nonzero element $\phi _{0}$ of $\mathcal{P}
$ such that $\left\Vert \phi -\phi _{0}\right\Vert _{\mathrm{cb}%
}<\varepsilon $ and $\left\Vert \phi _{0}\right\Vert _{\mathrm{cb}}
<\left\Vert \phi \right\Vert _{\mathrm{cb}}$.
\end{enumerate}

Fix $\varepsilon _{0}>0$ small enough and a finite $\varepsilon _{0}$-dense
subset $U$ of the group of automorphisms of $T_{s,q}$. Let $\mathcal{Q}$ be
the (finite) set of linear complete isometries from $\ell _{1}^{d}(T_{s,q})$
to $\ell _{1}^{m}(T_{s,q})$ such that every row contains at most one nonzero
entry, every column exactly one nonzero entry, and every nonzero entry is an
automorphism of $T_{s,q}$ that belongs to $U$. Fix any linear order $<$ on $%
\mathcal{Q}$, and a linear order on $\mathcal{P}$ with the property that 
\begin{equation}
\text{$\phi <\phi ^{\prime }$ whenever $\left\Vert \phi \right\Vert _{%
\mathrm{cb} }<\left\Vert \phi ^{\prime }\right\Vert _{\mathrm{cb}}$.}
\end{equation}
Endow $\mathcal{Q}\times \mathcal{P}$ with the corresponding
antilexicographic order. An element of $\mathrm{Epi}(n,\mathcal{P})$ is an $n
$-tuple $\overline{v}=(v_{1},\ldots ,v_{n})$ of elements of $\mathcal{P}$.
We associate with such an $n$-tuple the element $\alpha _{\overline{v}}$ of $%
\mathrm{CQ}(\ell _{1}^{n}(T_{s,q}),\ell _{1}^{d}(T_{s,q}))$ whose
representative matrix has $v_{i}$ as $i$-th column for $i=1,2,\ldots ,n$.
Similarly an element of $\mathrm{Epi}(n,\mathcal{Q}\times \mathcal{P})$ is
an $n$-tuple $(\overline{B},\overline{w})=(B_{1},w_{1},\ldots ,B_{n},w_{n})$%
. We associate with such an $n$-tuple the element $\alpha _{(\overline{B},%
\overline{w})}$ of $\mathrm{CQ}(\ell _{1}^{n}(T_{s,q}),\ell
_{1}^{m}(T_{s,q}))$ with $B_{i}w_{i}$ as $i$-th column for $i=1,2,\ldots ,n$%
. Suppose now that $n\in \mathbb{N}$ is obtained from $\mathcal{P}$ and $%
\mathcal{Q}\times \mathcal{P}$ by applying the dual Ramsey theorem, Theorem %
\ref{Theorem:DLT-rigid}. We claim that such an $n$ satisfies the desired
conclusions. Suppose that $c$ is an $r$-coloring of $\mathrm{CQ}(\ell
_{1}^{n}(T_{s,q}),\ell _{1}^{d}(T_{s,q}))$. The map $\overline{v}\mapsto
\alpha _{\overline{v}}$ from $\mathrm{Epi}(n,\mathcal{P})$ to $\mathrm{CQ}%
(\ell _{1}^{n}(T_{s,q}),\ell _{1}^{d}(T_{s,q}))$ induces an $r$-coloring on $%
\mathrm{Epi}(n,\mathcal{P})$. By the choice of $n$ there exists an element $(%
\overline{B},\overline{w})$ of $\mathrm{Epi}(n,\mathcal{Q}\times \mathcal{P}%
) $ such that any rigid surjection from $n$ to $\mathcal{P}$ that factors
through $(\overline{B},\overline{w})$ has a fixed color $i\in r$. To
conclude the proof it remains to show the following.

\begin{claim}
The set of completely contractive quotient mappings from $\ell
_{1}^{n}(T_{s,q})$ to $\ell _{1}^{d}(T_{s,q})$ that factor through $\alpha
_{(\overline{B},\overline{w})} $ is $\varepsilon $-monochromatic.
\end{claim}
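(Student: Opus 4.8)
The plan is to dualize the phrase ``factoring through $\alpha_{(\overline B,\overline w)}$'' and then discretize, so as to reduce everything to the instance of the Dual Ramsey Theorem (Theorem~\ref{Theorem:DLT-rigid}) that produced $n$. A completely contractive quotient mapping $\ell_1^n(T_{s,q})\to\ell_1^d(T_{s,q})$ factors through $\alpha_{(\overline B,\overline w)}$ exactly when it has the form $\beta\circ\alpha_{(\overline B,\overline w)}$ with $\beta\in\mathrm{CQ}(\ell_1^m(T_{s,q}),\ell_1^d(T_{s,q}))$, since a composite of complete quotient maps is again one. So we fix such a $\beta$; it suffices to produce a rigid surjection $\sigma\in\mathrm{Epi}(\mathcal{Q}\times\mathcal{P},\mathcal{P})$ with $\|\alpha_{\sigma\circ(\overline B,\overline w)}-\beta\circ\alpha_{(\overline B,\overline w)}\|_{\mathrm{cb}}\le\varepsilon$. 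Indeed $\sigma\circ(\overline B,\overline w)$ is then a rigid surjection in $\mathrm{Epi}(n,\mathcal{P})$ factoring through $(\overline B,\overline w)$, so $c(\alpha_{\sigma\circ(\overline B,\overline w)})=i$ by the choice of $n$, whence $\beta\circ\alpha_{(\overline B,\overline w)}$ lies in the $\varepsilon$-neighbourhood of $c^{-1}(i)$; since $\beta$ was arbitrary, this is exactly the Claim.

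The structural observation making this work is that, because each $B_j$ lies in $\mathcal{Q}$ --- exactly one nonzero entry $u_{B_j}\in U$ per column, placed in the row prescribed by an injection $\pi_{B_j}$ --- the composite $\beta\circ B_j\colon\ell_1^d(T_{s,q})\to\ell_1^d(T_{s,q})$ depends only on $B_j$: its matrix $C_{B_j}$ is obtained from the matrix of $\beta$ by selecting and twisting columns according to $\pi_{B_j}$ and the $u_{B_j}$. Hence the $j$-th column of $\beta\circ\alpha_{(\overline B,\overline w)}$ is $C_{B_j}w_j$, depending only on the pair $(B_j,w_j)$. As the completely bounded norm of a map out of an $\ell_1$-sum equals the supremum of the completely bounded norms of its restrictions to the summands, and as every pair $(B,w)\in\mathcal{Q}\times\mathcal{P}$ occurs among the $(B_j,w_j)$ (because $(\overline B,\overline w)$ is onto), it is enough to arrange $\|\sigma(B,w)-C_Bw\|_{\mathrm{cb}}\le\varepsilon$ for all $(B,w)$.

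This forces the shape of the definition: set $\sigma(B,w):=0$ when $C_Bw=0$, using (i); otherwise let $\sigma(B,w)$ be a carefully selected nonzero $\phi_0\in\mathcal{P}$ with $\|\phi_0-C_Bw\|_{\mathrm{cb}}<\varepsilon$ and $\|\phi_0\|_{\mathrm{cb}}<\|C_Bw\|_{\mathrm{cb}}$, provided by (iii). The required cb-closeness is then automatic, and the whole problem reduces to arranging the choices --- the finite set $\mathcal{P}$, the $\varepsilon_0$-dense set $U$ of automorphisms of $T_{s,q}$, the linear orders on $\mathcal{P}$ and $\mathcal{Q}$, and the selection rule for $\phi_0$ --- so that $\sigma$ is a rigid surjection. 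Surjectivity rests on the fact that $\beta$, being a complete quotient map, has in each row $k$ of its matrix an entry $\beta_{k,l_k}$ that is a surjective complete isometry of $T_{s,q}$, while the column $l_k$ is supported only in row $k$ and the $l_k$ are distinct; choosing $B\in\mathcal{Q}$ with $\pi_B(k)=l_k$ and twists $\varepsilon_0$-close to $\beta_{k,l_k}^{-1}$ makes $C_B$ as close as we like to the identity of $\ell_1^d(T_{s,q})$, which together with (i), (ii) and the selection rule yields that every element of $\mathcal{P}$ is attained --- the zero map already on the first antilexicographic block $\mathcal{Q}\times\{0\}$.

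The verification of rigidity is where the combinatorial bookkeeping concentrates, and is the step I expect to be the main obstacle. Since the order on $\mathcal{P}$ refines the completely bounded norm and the order on $\mathcal{Q}\times\mathcal{P}$ is antilexicographic with the $\mathcal{P}$-coordinate dominant, the strict inequality $\|\sigma(B,w)\|_{\mathrm{cb}}<\|C_Bw\|_{\mathrm{cb}}\le\|w\|_{\mathrm{cb}}$ guarantees that every preimage $(B,w)$ of a nonzero $\phi\in\mathcal{P}$ has $w>_{\mathcal{P}}\phi$; one then checks, exactly as in the Banach-space argument of \cite{bartosova_2019}, that as $\phi$ runs over $\mathcal{P}$ the $<$-least preimage of $\phi$ occurs in increasing $\mathcal{P}$-order, so that $\sigma\in\mathrm{Epi}(\mathcal{Q}\times\mathcal{P},\mathcal{P})$. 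This is precisely what dictates the requirements (i)--(iii) and the smallness of $\varepsilon_0$. With $\sigma$ a rigid surjection, the reduction of the first paragraph finishes the proof.
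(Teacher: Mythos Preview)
Your overall strategy matches the paper's exactly: reduce to producing, for each $\beta\in\mathrm{CQ}(\ell_1^m(T_{s,q}),\ell_1^d(T_{s,q}))$, a rigid surjection $\sigma\in\mathrm{Epi}(\mathcal{Q}\times\mathcal{P},\mathcal{P})$ with $\|\sigma(B,w)-C_Bw\|_{\mathrm{cb}}\le\varepsilon$ for all $(B,w)$, and use the near-right-inverse $A^{\dagger}\in\mathcal{Q}$ (your $B$ with $C_B\approx\mathrm{Id}$) together with the strict norm-reduction from property~(iii). The reduction in your first two paragraphs is clean and correct.

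The gap is in the definition of $\sigma$ itself. You set $\sigma(B,w)=0$ when $C_Bw=0$ and otherwise pick a $\phi_0$ with $\|\phi_0\|_{\mathrm{cb}}<\|C_Bw\|_{\mathrm{cb}}$, and then defer both surjectivity and rigidity to ``the selection rule'' and ``exactly as in the Banach-space argument''. But with only these two clauses, neither property follows. Surjectivity fails because nothing forces the selection rule applied to the pairs $(A^{\dagger},w)$ to sweep out all of $\mathcal{P}$; you only know $C_{A^{\dagger}}w$ is \emph{close} to $w$, not that the selected $\phi_0$ equals $w$. Rigidity fails because the inequality $\|\sigma(B,w)\|_{\mathrm{cb}}<\|w\|_{\mathrm{cb}}$ only tells you every preimage of $\phi$ lies in $\mathcal{Q}\times\{w:w>\phi\}$; it gives no control on how the \emph{minima} of these preimage sets are ordered across different $\phi$.

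The paper's fix is precisely the missing third clause: override $\sigma(A^{\dagger},w):=w$ for all $w\in\mathcal{P}$. Then surjectivity is immediate, and rigidity is a one-line check: for nonzero $\phi$, the minimum preimage is $(A^{\dagger},\phi)$, since any other preimage $(B',w')$ with $B'\neq A^{\dagger}$ satisfies $\|\phi\|_{\mathrm{cb}}<\|C_{B'}w'\|_{\mathrm{cb}}\le\|w'\|_{\mathrm{cb}}$, whence $\phi<w'$ and $(A^{\dagger},\phi)<(B',w')$ antilexicographically. The cb-closeness on the overridden pairs holds because $\|C_{A^{\dagger}}w-w\|_{\mathrm{cb}}\le\|AA^{\dagger}-\mathrm{Id}\|_{\mathrm{cb}}\le\varepsilon$. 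You already constructed $A^{\dagger}$; you just need to put it into the definition of $\sigma$ rather than only into the surjectivity discussion. (A minor related point: take the zero case to be $w=0$, not $C_Bw=0$; the latter can occur for nonzero $w$ and would interfere with the ``only $(B,0)$ map to $0$'' part of the rigidity check.)
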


\begin{proof}[Proof of Claim:]
This will follow once we show that, given any $\rho \in \mathrm{CQ}(\ell
_{1}^{m}(T_{s,q}),\ell _{1}^{d}(T_{s,q}))$ there exists $\tau \in \mathrm{Epi%
}(\mathcal{Q}\times \mathcal{P},\mathcal{P})$ such that $\left\Vert \alpha
_{\tau (\overline{B},\overline{w})}-\rho \circ \alpha _{(\overline{B},%
\overline{w})}\right\Vert _{\mathrm{cb}}\leq \varepsilon $, where we denoted
by $\tau (\overline{B},\overline{w})$ the element $(\tau
(B_{1},w_{1}),\ldots ,\tau (B_{n},w_{n}))$ of $\mathrm{Epi}(n,\mathcal{P})$.
If $\rho $ has representative matrix $A$, then this is equivalent to the
assertion that, for every $i\leq n$, $\tau (B_{i},w_{i})$ has \textrm{cb}%
-distance at most $\varepsilon $ from $AB_{i}w_{i}$. We proceed to define
such a rigid surjection $\tau $ from $\mathcal{Q}\times \mathcal{P}$ to $%
\mathcal{P}$. By the structure of completely contractive quotient mappings
from $\ell _{1}^{m}(T_{s,q})$ to $\ell _{1}^{d}(T_{s,q})$ recalled above,
there exists $A^{\dagger }\in \mathcal{Q}$ such that $\left\Vert AA^{\dagger
}-\mathrm{Id}_{\ell _{1}^{d}(T_{s,q})}\right\Vert _{\mathrm{cb}}\leq
\varepsilon $, provided that $\varepsilon _{0}$ is small enough (depending
only from $\varepsilon $). Define now $\tau :\mathcal{Q}\times \mathcal{P}%
\rightarrow \mathcal{P}$ by letting, for $B\in \mathcal{Q}$ and $w\in 
\mathcal{P}$, $\tau (B,w)=0$ if $w=0$, $\tau (B,w)=w$ if $B=A^{\dagger }$,
and otherwise $\tau (B,w)\in \mathcal{P}$ such that $0<\left\Vert \tau
(B,w)\right\Vert _{\mathrm{cb}}<\left\Vert ABw\right\Vert _{\mathrm{cb}}$
and $\left\Vert \tau (B,w)-ABw\right\Vert _{\mathrm{cb}}<\varepsilon $. It
is clear from the definition that $\tau (B,w)$ has distance at most $%
\varepsilon $ from $ABw$. We need to verify that $\tau $ is indeed a rigid
surjection from $\mathcal{Q}\times \mathcal{P}$ to $\mathcal{P}$. Observe
that $\tau $ is onto, and the pairs $(B,0)$ are the only elements of $%
\mathcal{Q}\times \mathcal{P}$ that are mapped by $\tau $ to zero. It is
therefore enough to prove that, for every $w\in \mathcal{P}$, $(A^{\dagger
},w)$ is the minimum of the preimage of $w$ under $\tau $. Suppose that $%
(B^{\prime },w^{\prime })$ is an element of the preimage of $w$ under $\tau $%
. Then by definition of $\tau $ we have that%
\begin{equation*}
\left\Vert w\right\Vert _{\mathrm{cb}}<\left\Vert ABw^{\prime }\right\Vert _{%
\mathrm{cb}}\leq \left\Vert w^{\prime }\right\Vert _{\mathrm{cb}}\text{.}
\end{equation*}%
By our assumptions on the ordering of $\mathcal{P}$, it follows that $%
w<w^{\prime }$ and hence $(A^{\dagger },w)<(B^{\prime },w^{\prime })$. This
concludes the proof. \qedhere \mbox{ }\qedsymbol 
\end{proof}

\let\qed\relax
\end{proof}

Using the general facts about the approximate Ramsey property from
Proposition \ref{SRP=cSRP}, one can bootstrap the approximate Ramsey
property from the class considered in Lemma \ref%
{Proposition:dual-ramsey-spaces} to other classes of operator spaces. In
fact one can obtain the compact stable Ramsey property with modulus $\varpi
\left( \delta \right) =\delta $; see Definition \ref{Definition:ARP}.

\begin{theorem}
\label{Theorem:ARP-spaces}The following classes of finite-dimensional
operator spaces satisfy the compact (SRP) with modulus $\varpi (\delta
)=\delta $:

\begin{enumerate}[(1)]  

\item for every $q\in \mathbb{N}$, the injective class $\mathbb{I}_{q}$ and
the class $\langle \mathbb{I}_q\rangle$ of finite-dimensional $q$-minimal
operator spaces, and in particular the class of finite-dimensional Banach
spaces.

\item The injective class $\mathbb{I}_{\mathrm{c}}$ and the class $\langle 
\mathbb{I}_\mathrm{c}\rangle$ of finite-dimensional operator sequence spaces.

\item The classes $\mathbb{I}_{\mathrm{e}}$, $\mathbb{I}_{\mathrm{inj}}$ and
the class $\langle \mathbb{I}_\mathrm{e}\rangle=\langle \mathbb{I}_\mathrm{%
inj}\rangle$ of finite-dimensional exact operator spaces.

\end{enumerate}
\end{theorem}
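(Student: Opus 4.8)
The plan is to isolate the only genuinely combinatorial input---Lemma~\ref{Proposition:direct-ramsey-spaces}, equivalently its dual Lemma~\ref{Proposition:dual-ramsey-spaces}---and then propagate the Ramsey property by the transfer results Propositions~\ref{ARP=DARP} and~\ref{SRP=cSRP}. Write $\mathcal{C}_{q,s}:=\{\ell_\infty^d(M_{q,s})\}_{d\in\mathbb{N}}$. Lemma~\ref{Proposition:direct-ramsey-spaces} is precisely the statement that $\mathcal{C}_{q,s}$ satisfies the discrete (ARP), hence by Proposition~\ref{ARP=DARP} the (ARP). Because $M_{q,s}\oplus_\infty M_{q,s}=\ell_\infty^2(M_{q,s})\in\mathcal{C}_{q,s}$, Theorem~\ref{injective_classes_are_fraisse} applies: $\mathcal{C}_{q,s}$ is a stable amalgamation class and $\langle\mathcal{C}_{q,s}\rangle$ a stable Fra\"{\i}ss\'{e} class, both with modulus $\varpi(\delta)=\delta$, so both satisfy the (SAP) with modulus $\delta$. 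Feeding this into Proposition~\ref{SRP=cSRP} with $\mathcal{A}=\mathcal{C}_{q,s}$ and $\mathcal{B}=\langle\mathcal{C}_{q,s}\rangle$, the equivalence of (1), (4), (5) gives that $\mathcal{C}_{q,s}$ satisfies the compact (SRP) with modulus $\delta$ and that $\langle\mathcal{C}_{q,s}\rangle$ satisfies the (ARP); applying the (1)$\Leftrightarrow$(4) equivalence once more, now with $\mathcal{A}=\langle\mathcal{C}_{q,s}\rangle$, also gives the compact (SRP) with modulus $\delta$ for $\langle\mathcal{C}_{q,s}\rangle$. Since $\mathbb{I}_q=\mathcal{C}_{q,q}$ and $\langle\mathbb{I}_q\rangle=\langle\mathcal{C}_{q,q}\rangle$, and $q=1$ is the Banach case, item~(1) is done.

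For items~(2) and~(3) the remaining classes are not of the form $\mathcal{C}_{q,s}$, but any two of their members jointly embed into members of a single $\mathcal{C}_{Q,S}$. The elementary fact to record is that for $q\le Q$ and $s\le S$ the block-diagonal map $M_{q,s}\oplus_\infty M_{Q-q,S-s}\to M_{Q,S}$ is a complete isometry, so $M_{q,s}$ sits completely isometrically as a corner of $M_{Q,S}$. Hence, for $X,Y\in\mathbb{I}_{\mathrm{inj}}$ whose matrix blocks have sizes $(q_i,s_i)$, taking $Q:=\max_i q_i$ and $S:=\max_i s_i$ over the blocks of $X$ and $Y$ realizes $X$ and $Y$, up to complete isometry, as subspaces of elements of $\mathcal{C}_{Q,S}$, so $X,Y\in\langle\mathcal{C}_{Q,S}\rangle$; with $s=1$, $Q=\max(q_X,q_Y)$ the same places any $X,Y\in\mathbb{I}_{\mathrm{c}}$ in $\langle\mathcal{C}_{Q,1}\rangle$, and with $\mathcal{C}_{Q,Q}=\mathbb{I}_Q$, $Q=\max(q_X,q_Y)$, it places any $X,Y\in\mathbb{I}_{\mathrm{e}}$ in $\langle\mathbb{I}_Q\rangle$.

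Now I would invoke Proposition~\ref{SRP=cSRP} with a triple $\mathcal{A}\subseteq\mathcal{B}\subseteq\langle\mathcal{A}\rangle$, verifying its condition~(6), for $\mathcal{A}$ equal to $\mathbb{I}_{\mathrm{c}}$, $\mathbb{I}_{\mathrm{e}}$ or $\mathbb{I}_{\mathrm{inj}}$ and $\mathcal{B}=\langle\mathcal{A}\rangle$. Both have the (SAP) with modulus $\delta$ by Theorem~\ref{injective_classes_are_fraisse}: each family is closed under $\infty$-sums up to embedding into a member ($\mathbb{I}_{\mathrm{inj}}$ literally, and $M_q\oplus_\infty M_q\hookrightarrow M_{2q}$ for $\mathbb{I}_{\mathrm{e}}$). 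Given $X,Y\in\mathcal{A}$, pick $(Q,S)$ as above so $X,Y\in\langle\mathcal{C}_{Q,S}\rangle$; the compact, hence discrete, (ARP) of $\langle\mathcal{C}_{Q,S}\rangle$ from the first paragraph provides, for every $r$ and $\varepsilon$, some $Z\in\langle\mathcal{C}_{Q,S}\rangle$ for which every $r$-coloring of $\Emb(X,Z)$ has an $\varepsilon$-monochromatic set $\gamma\circ\Emb(X,Y)$, $\gamma\in\Emb(Y,Z)$; and $Z\in\langle\mathcal{C}_{Q,S}\rangle\subseteq\langle\mathcal{A}\rangle$ by monotonicity of $\langle\,\cdot\,\rangle$ together with $\mathcal{C}_{Q,1}\subseteq\mathbb{I}_{\mathrm{c}}$, $\mathcal{C}_{Q,S}\subseteq\mathbb{I}_{\mathrm{inj}}$ and $\mathbb{I}_Q\subseteq\mathbb{I}_{\mathrm{inj}}$ respectively (using $\langle\mathbb{I}_{\mathrm{inj}}\rangle=\langle\mathbb{I}_{\mathrm{e}}\rangle$ in the last case). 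This is condition~(6), so Proposition~\ref{SRP=cSRP} yields the (ARP) for $\mathcal{A}$ and for $\langle\mathcal{A}\rangle$, and the (1)$\Leftrightarrow$(4) equivalence upgrades both to the compact (SRP) with modulus $\delta$. As $\langle\mathbb{I}_{\mathrm{e}}\rangle=\langle\mathbb{I}_{\mathrm{inj}}\rangle$, items~(2) and~(3) follow.

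The work is bookkeeping rather than depth: one must keep the generated classes $\langle\,\cdot\,\rangle$ taken up to complete isometry (so that ``$X$ embeds completely isometrically into an element of $\mathcal{C}_{Q,S}$'' genuinely puts $X$ in $\langle\mathcal{C}_{Q,S}\rangle$) and must check the (SAP)-with-modulus-$\delta$ hypotheses of Proposition~\ref{SRP=cSRP} for each class and its generated class; both reduce to Theorem~\ref{injective_classes_are_fraisse} and the corner-embedding remark. Nothing beyond Lemma~\ref{Proposition:direct-ramsey-spaces} is needed.
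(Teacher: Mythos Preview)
Your proof is correct and follows essentially the same strategy as the paper: use Lemma~\ref{Proposition:direct-ramsey-spaces} as the sole combinatorial input, then transfer via Propositions~\ref{ARP=DARP} and~\ref{SRP=cSRP}. For item~(1) your argument is identical to the paper's. For item~(3) concerning $\mathbb{I}_{\mathrm{e}}$ the paper also argues via condition~(6) of Proposition~\ref{SRP=cSRP}, observing that $M_p,M_q$ are $q$-minimal and invoking part~(1); your version with $Q=\max(p,q)$ is the same idea. The only genuine difference is that for item~(2) the paper simply asserts in one line that ``Lemma~\ref{Proposition:direct-ramsey-spaces} for $s=1$ is the discrete (ARP) of $\mathbb{I}_{\mathrm{c}}$'', which is literally only the discrete (ARP) of each $\mathcal{C}_{q,1}$ separately; your corner-embedding step (placing $X,Y\in\mathbb{I}_{\mathrm{c}}$ with possibly different column sizes into a common $\langle\mathcal{C}_{Q,1}\rangle$ and then appealing to condition~(6)) makes explicit the bridge the paper leaves to the reader. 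You also treat $\mathbb{I}_{\mathrm{inj}}$ directly rather than only via $\langle\mathbb{I}_{\mathrm{inj}}\rangle=\langle\mathbb{I}_{\mathrm{e}}\rangle$. These are presentational refinements, not a different route.
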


\begin{proof}
In each of the cases, it is enough to verify that the given class satisfies
the discrete (ARP) in view of Proposition \ref{ARP=DARP} and the equivalence
of \textit{(1)} and \textit{(5)} in Proposition \ref{SRP=cSRP}.

\noindent\textit{(1)}: It follows from Lemma \ref%
{Proposition:direct-ramsey-spaces} for $q=r$ that $\mathbb{I}_{q}$ satisfies
the discrete (ARP). 

\noindent \textit{(2)}: Lemma \ref{Proposition:direct-ramsey-spaces} for $s=1$
is the discrete (ARP) of $\mathbb{I}_\mathrm{c}$. 

\noindent \textit{(3)}: We verify that the class of finite dimensional exact
operator spaces $\langle \mathbb{I}_{\mathrm{e}}\rangle$ satisfies the
(ARP). By the equivalence of \textit{(1)} and \textit{(6)} in Proposition \ref%
{SRP=cSRP}, it suffices to show that for every positive integer $p,q,r$ such
that $p\leq q$, and every $\varepsilon >0$ there is some finite-dimensional
exact operator space $Z$ such that every $r$-coloring of $\mathrm{Emb}%
(M_{p},Z)$ $\varepsilon $-stabilizes on $\gamma \circ \mathrm{Emb}%
(M_{p},M_{q})$ for some $\gamma \in \mathrm{Emb}(M_{q},Z)$. Now, $M_{p}$ and 
$M_{q}$ are $q$-minimal, so by \textit{(1)} there is such a $Z$ which is a
finite-dimensional $q$-minimal operator space. Since every $q$-minimal
operator space is exact, this concludes that $\langle \mathbb{I}_{\mathrm{e}%
}\rangle$ satisfies the (ARP). It follows from the equivalence \textit{(1)}
and \textit{(5)} in Proposition \ref{SRP=cSRP} that both $\mathbb{I}_\mathrm{e%
}$ and $\mathbb{I}_\mathrm{inj}$ also satisfies the (ARP). 
\end{proof}

Theorem \ref{Theorem:ea-spaces} below extends the work on the Gurarij space
from \cite[Theorem 2.5]{bartosova_2019}. It is a consequence of the KPT
correspondence as in Proposition \ref{Proposition:ARP} and corresponding
approximate Ramsey properties in Theorem \ref{Theorem:ARP-spaces}. It shows
that the limits of the Fra\"{\i}ss\'{e} classes mentioned in Definition \ref%
{Definition:NG} have extremely amenable automorphism groups.

\begin{theorem}
\label{Theorem:ea-spaces} The following operator spaces have extremely
amenable automorphism groups:

\begin{enumerate}[(1)]

\item each $q$-minimal Gurarij space $\mathbb{G}_{q}$, and in particular the
Gurarij space $\mathbb{G}$.

\item The column Gurarij space $\mathbb{CG}$.

\item The noncommutative Gurarij space $\mathbb{NG}$.
\end{enumerate}
\end{theorem}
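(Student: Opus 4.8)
The plan is to deduce all three statements from the approximate Ramsey property already established in Theorem \ref{Theorem:ARP-spaces}, via the Kechris--Pestov--Todorcevic correspondence of Proposition \ref{Proposition:ARP}. The point is that each of $\mathbb{G}_q$, $\mathbb{CG}$, $\mathbb{NG}$ has already been identified as the Fra\"{\i}ss\'{e} limit of an \emph{injective class}, and the Ramsey-theoretic input needed to run the KPT machine is precisely the content of Theorem \ref{Theorem:ARP-spaces}.

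First I would recall from Example \ref{Definition:NG} that $\mathbb{G}_q=\flim \mathbb{I}_q$, $\mathbb{CG}=\flim \mathbb{I}_\mathrm{c}$, and $\mathbb{NG}=\flim \mathbb{I}_\mathrm{e}$, and that each of $\mathbb{I}_q$, $\mathbb{I}_\mathrm{c}$, $\mathbb{I}_\mathrm{e}$ satisfies the hypotheses of Theorem \ref{injective_classes_are_fraisse}; hence each $\mathbb{I}$ is a stable amalgamation class and $\langle \mathbb{I}\rangle$ is a stable Fra\"{\i}ss\'{e} class, both with modulus $\varpi(\delta)=\delta$, and in particular each such Fra\"{\i}ss\'{e} limit exists and is separable, with Polish automorphism group. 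Next, Theorem \ref{Theorem:ARP-spaces} gives that each of the classes $\mathbb{I}_q$, $\mathbb{I}_\mathrm{c}$, $\mathbb{I}_\mathrm{e}$ satisfies the compact (SRP) with modulus $\varpi(\delta)=\delta$, and in particular the (ARP). Finally I would apply Proposition \ref{Proposition:ARP} with $\mathcal{A}=\mathbb{I}$ and $E$ the corresponding limit: since $\mathbb{I}$ and $\langle \mathbb{I}\rangle$ both satisfy the stable amalgamation property with modulus $\varpi$, the equivalence of conditions \textit{(1)} and \textit{(6)} there shows that $\mathrm{Aut}^{\mathtt{Osp}}(E)$ is extremely amenable exactly because $\mathbb{I}$ has the (ARP). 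Taking $E=\mathbb{G}_q$, $E=\mathbb{CG}$, and $E=\mathbb{NG}$ in turn yields the three assertions.

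Since every ingredient is already in place, there is no genuine obstacle remaining at this stage: the only thing requiring care is to match each space with an injective class $\mathbb{I}$ to which Theorem \ref{Theorem:ARP-spaces} applies, which is recorded in Example \ref{Definition:NG}. I would emphasize, however, that the substantive work sits upstream --- in Theorem \ref{Theorem:ARP-spaces}, which rests on the Dual Ramsey Theorem \ref{Theorem:DLT-rigid} through Lemma \ref{Proposition:dual-ramsey-spaces} and the description of completely contractive quotient maps between the spaces $\ell_1^n(T_{s,q})$ --- while the present deduction is the routine oscillation-stability half of the KPT correspondence, carried out once and for all in Proposition \ref{Proposition:ARP}.
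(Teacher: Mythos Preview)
Your proposal is correct and follows exactly the same route as the paper: the theorem is stated there as an immediate consequence of the KPT correspondence (Proposition \ref{Proposition:ARP}) combined with the approximate Ramsey property established in Theorem \ref{Theorem:ARP-spaces}, and you have spelled out precisely this deduction. The only difference is that you are more explicit about which equivalence in Proposition \ref{Proposition:ARP} is being invoked and why its hypotheses are met, which is fine.
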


\begin{remark}
One can also give a direct, quantitative proof of the ARP for
finite-dimensional\ Banach spaces using Lemma \ref%
{Proposition:dual-ramsey-spaces} when $s=q=1$ and the injective envelope
construction \cite[Subsection 4.2]{blecher_operator_2004}. Such a proof
yields an explicit bound of the Ramsey numbers for the class of
finite-dimensional\ Banach spaces in terms of the Ramsey numbers for the
Dual Ramsey Theorem. Furthermore the same argument applies with no change in
the case of real Banach spaces, yielding extreme amenability of the group of
surjective linear isometries of the real Gurarij space (see \cite[Appendix A]%
{bartosova_2019_1}).
\end{remark}

\subsection{Operator spaces with a distinguished functional\label%
{Sec:functional}}

\subsubsection{Fra\"{\i}ss\'{e} limits of $R$-operator spaces}

The natural geometric object associated with a Banach space $X$ is the unit
ball $\mathrm{Ball}(X^{\ast })$ of the dual of $X$, i.e.\ the $w^{\ast }$%
-compact convex symmetric set of contractive linear functionals on $X$. As
discussed in the introduction, the noncommutative analog of such a
correspondence involves the notion of matrix functionals. Given an operator
space $X$, a matrix functional on $X$ is a linear function from $X$ to $%
M_{q,r}(\C)$ for some $q,r\in \mathbb{N}$. The space $\mathrm{C\mathrm{Ball}}%
(X^{\ast })$ is the sequence $(K_{q,r})_{q,r\in \mathbb{N}}$, where $K_{q,r}$
is the $w^{\ast }$-compact convex set of completely contractive matrix
functionals from $X$ to $M_{q,r}(\C)$. The space $\mathrm{C\mathrm{Ball}}%
(X^{\ast })$ is endowed with a notion of rectangular matrix convex
combinations that makes it a compact rectangular matrix convex set \cite%
{fuller_boundary_2018}. Furthermore any compact rectangular convex set
arises in this way. It should be clear from this that matrix functionals
provide the right noncommutative analog of functionals on Banach spaces.

More generally, suppose that $R$ is a separable $\emph{nuclear}$ operator
space, that is, the identity map of $R$ is the pointwise limit of completely
contractive maps that factor through finite-dimensional injective operator
spaces. When $R$ is in addition a minimal operator space (i.e.\ a Banach
space), this is equivalent to the assertion that $R$ is a \emph{Lindenstrauss%
} Banach space \cite[Subsection 8.6.4]{blecher_operator_2004}. A classical
result of Wojtaszczyk \cite{wojtaszczyk_remarks_1972} asserts that the
separable Lindenstrauss spaces are precisely the separable Banach spaces
that are isometric to the range of a contractive projection on the Gurarij
space $\mathbb{G}$. The noncommutative analog of such a result asserts that
the separable nuclear operator spaces are precisely the separable operator
spaces that are completely isometric to the range of a completely
contractive projection on the noncommutative Gurarij space \cite%
{lupini_operator_2015}. A similar result holds for operator sequence spaces
in terms of the column Gurarij space \cite[Subsection 6.5]%
{lupini_fraisse_2018}. Notice that injective \emph{finite-dimensional}
operator spaces are always nuclear, but the converse does not hold. The
following result can be found in \cite[\S 6.5, \S 6.6]{lupini_fraisse_2018}.

\begin{proposition}
\label{nuclear_injective} Let $R$ be a separable operator space.

\begin{enumerate}[(1)]

\item Suppose that  $R$ is $q$-minimal. Then $R$ is nuclear if and only if it is $\mathbb{I}_{q}$-nuclear, i.e.\ the identity
on $R$ is the pointwise limit of completely contractive maps that factor
through some space in $\mathbb{I}_{q}$. In particular, nuclear $q$-minimal separable operator spaces belong to $[\mbb I_q]$. 

\item Suppose that $R$ is an operator sequence space. Then $R$ is nuclear if and only it is $\mathbb{I}_{\mathrm{c}}$-nuclear, i.e.\ the identity on $R$ is the pointwise limit of completely contractive maps that factor through some space in $\mathbb{I}_{\mathrm{c}}$.   In particular, nuclear  separable  sequence operator spaces belong to $[\mbb I_c]$. 
\end{enumerate}
\end{proposition}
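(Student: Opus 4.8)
The plan is to dispatch one implication and the ``in particular'' clause essentially formally, and then to reduce the real content of (1) to a short construction. First, every element of $\mathbb{I}_q$ has the form $\ell_\infty^n(M_q)=M_q\oplus_\infty\cdots\oplus_\infty M_q$, which is a finite-dimensional injective operator space, so $\mathbb{I}_q$-nuclearity trivially implies nuclearity. Conversely, if $R$ is $\mathbb{I}_q$-nuclear and $X\subseteq R$ is finite-dimensional, then choosing completely contractive $\phi\colon R\to\ell_\infty^m(M_q)$ and $\psi\colon\ell_\infty^m(M_q)\to R$ with $\psi\circ\phi\rest X$ close enough to $\mathrm{Id}_X$ makes $\phi\rest X$ a $\delta$-embedding into a member of $\mathbb{I}_q$, so $R\in[\mathbb{I}_q]$ by the reformulation of $[\,\cdot\,]$ noted right after its definition (alternatively, $R\in[\mathbb{I}_q]$ already because, by Definition~\ref{Definition:exact}, $[\mathbb{I}_q]$ is exactly the class of $q$-minimal operator spaces). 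Thus the only thing left is to show that a $q$-minimal nuclear $R$ is $\mathbb{I}_q$-nuclear. Here I would use the local, approximate-factorization form of nuclearity (for every finite-dimensional $X\subseteq R$ and $\varepsilon>0$ there are a finite-dimensional injective $E$ and completely contractive $\phi\colon R\to E$, $\psi\colon E\to R$ with $\Vert\psi\circ\phi\rest X-\mathrm{Id}_X\Vert_{\mathrm{cb}}\le\varepsilon$); its equivalence with the pointwise-limit definition is harmless, since the completely bounded norm of a linear map out of a finite-dimensional operator space of dimension $d$ is attained at the $d$-th amplification.

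Given finite-dimensional $X\subseteq R$, $\varepsilon>0$, and a small $\delta>0$ to be fixed at the end, the construction I have in mind has five steps. (i) As $R$ is $q$-minimal, $R\in[\mathbb{I}_q]$ by Definition~\ref{Definition:exact}, so there are $m\in\mathbb{N}$ and a $\delta$-embedding $u\colon X\to\ell_\infty^m(M_q)$. (ii) Since $\ell_\infty^m(M_q)$ is injective, extend the complete contraction $u$ to a complete contraction $a\colon R\to\ell_\infty^m(M_q)$ with $a\rest X=u$. (iii) Set $v:=(1+\delta)^{-1}u^{-1}\colon u(X)\to X\subseteq R$, a complete contraction because $\Vert u^{-1}\Vert_{\mathrm{cb}}\le 1+\delta$. (iv) Apply the local form of nuclearity of $R$ to the subspace $X$ to obtain a finite-dimensional injective $E'$ and complete contractions $\phi'\colon R\to E'$, $\psi'\colon E'\to R$ with $\Vert\psi'\circ\phi'\rest X-\mathrm{Id}_X\Vert_{\mathrm{cb}}\le\delta$; then $\phi'\circ v\colon u(X)\to E'$ is a complete contraction, so by injectivity of $E'$ it extends to a complete contraction $w\colon\ell_\infty^m(M_q)\to E'$, and I put $b:=\psi'\circ w\colon\ell_\infty^m(M_q)\to R$, so that $b\rest u(X)=\psi'\circ\phi'\circ v$. (v) Now $b\circ a\colon R\to R$ factors through $\ell_\infty^m(M_q)\in\mathbb{I}_q$, and unwinding the definitions gives $b(a(x))=b(u(x))=(1+\delta)^{-1}(\psi'\circ\phi')(x)$ for $x\in X$, whence $\Vert b\circ a\rest X-\mathrm{Id}_X\Vert_{\mathrm{cb}}\le(1+\delta)^{-1}\Vert\psi'\circ\phi'\rest X-\mathrm{Id}_X\Vert_{\mathrm{cb}}+\bigl(1-(1+\delta)^{-1}\bigr)\le 2\delta$. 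Choosing $\delta\le\varepsilon/2$ produces the desired approximate factorization of $\mathrm{Id}_X$ through a member of $\mathbb{I}_q$, i.e.\ $\mathbb{I}_q$-nuclearity.

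I do not expect a deep obstacle, but the one step that is genuinely not automatic---and the reason the statement requires an argument---is the construction of the map $b\colon\ell_\infty^m(M_q)\to R$ in step (iv). Since $R$ is not injective one cannot simply extend the partial inverse $v\colon u(X)\to R$ to all of $\ell_\infty^m(M_q)$; and the tempting shortcut of embedding $R$ completely isometrically into $\ell_\infty(\mathbb{N},M_q)$ (possible by $q$-minimality) and truncating coordinates fails, because $\ell_\infty(\mathbb{N},M_q)$ is not ``$c_0$-like'' and coordinate truncations need not converge to the identity even on finite-dimensional subspaces. Nuclearity of $R$ is precisely the hypothesis that fixes this, by allowing one to route $v$ through a finite-dimensional injective $E'$ and then invoke the injectivity of $E'$. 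Accordingly, $q$-minimality of $R$ enters only in step (i), to guarantee via Definition~\ref{Definition:exact} that the approximating spaces can be chosen inside $\mathbb{I}_q$ rather than among arbitrary finite-dimensional injectives. Part (2) is proved word for word, replacing each $\ell_\infty^m(M_q)$ by a suitable column space $\ell_\infty^m(M_{q,1})$ (still finite-dimensional injective) and using that, by Definition~\ref{Definition:exact}, $[\mathbb{I}_{\mathrm c}]$ is the class of operator sequence spaces.
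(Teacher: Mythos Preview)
The paper does not prove this proposition; it merely cites \cite[\S 6.5, \S 6.6]{lupini_fraisse_2018} for the result. Your argument is therefore not comparable to anything in the present paper, but it is a correct self-contained proof, and the idea in step~(iv)---routing the partial inverse $v$ through an auxiliary finite-dimensional injective $E'$ supplied by nuclearity, so that injectivity of $E'$ (rather than of $R$) furnishes the required extension---is exactly the right way to handle the non-injectivity of $R$.

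One small inaccuracy: the fact you invoke to pass between the pointwise-limit and local formulations of nuclearity (``the cb-norm of a linear map out of a $d$-dimensional operator space is attained at the $d$-th amplification'') is not the correct justification; that is Smith's lemma, which concerns maps \emph{into} $M_d$. What you actually need is that pointwise convergence of linear maps $T_n\colon X\to R$ on a finite-dimensional $X$ implies $\Vert T_n\Vert_{\mathrm{cb}}\to 0$, and this follows instead from writing $T_n=\sum_i f_i(\cdot)\,T_n(x_i)$ for an Auerbach basis $(x_i,f_i)$ of $X$ and using $\Vert f_i(\cdot)\,y\Vert_{\mathrm{cb}}\le\Vert f_i\Vert\,\Vert y\Vert$. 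With that correction, your proof stands.
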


\begin{definition}[$R$-functionals]
For an operator space $X$ and a separable nuclear operator space $R$, an $R$%
\emph{-functional} on $X$ is a completely bounded linear operator from $X$
to $R$. Let $\mathrm{CC}(X,R)$ be the space of completely contractive $R$%
-functionals on $X$, regarded as a Polish space with the topology of
pointwise convergence. Let $\mathrm{Aut}(X)\curvearrowright \mathrm{CC}(X,R)$
be the continuous action $(\alpha ,s)\mapsto s\circ \alpha ^{-1}$. Finally,
given $s\in \mathrm{CC}(X,R)$, let $\mathrm{Aut}(X,s)\subseteq \mathrm{Aut}%
(X)$ be the stabilizer of $s$ with respect to such an action.
\end{definition}

{Recall that given a family $\mathcal{A}$ of operator spaces and a separable
operator space $R$, let $\mathcal{A}^{R}$ be the collection of $R$-operator
spaces $\boldsymbol{X}=(X,s_{X})$ where $X\in \mathcal{A}$ and $%
s_{X}:X\rightarrow R$ is a complete contraction. }

\begin{proposition}
\label{Proposition:fraisse-functional}{Let $\mathbb{I}$ be an injective
class of operator spaces as in Definition \ref{matrix_classes}, and let $R$ be a separable operator space. Then}

\begin{enumerate}[(1)]
\item {$\mathbb{I}^{R}$ is a stable amalgamation class with stability
modulus $\varpi (\delta )=2\delta $ when $R\in \mathbb{I}$.}

\item $\langle \mathbb{I}\rangle ^{R}$ is a stable Fra\"{\i}ss\'{e} class
with stability modulus $\varpi (\delta )=2\delta $.

\item {Suppose that in addition $R$ is nuclear. Then the Fra\"{\i}ss\'{e} limit of $\langle 
\mathbb{I}\rangle ^{R}$ is the $R$-operator space $(\flim\mathbb{I},\Omega _{%
\flim\mathbb{I}}^{R})$.}
\end{enumerate}
\end{proposition}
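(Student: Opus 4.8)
This is a direct application of Proposition \ref{oij32irj23jr32}. By Theorem \ref{injective_classes_are_fraisse}, $\mathbb{I}$ is a stable amalgamation class of finite-dimensional injective operator spaces with modulus $\varpi(\delta)=\delta$. Since $R\in\mathbb{I}$ is finite-dimensional it is separable, and every element of $\mathrm{Age}^{\mathtt{Osp}}(R)$, being a subspace of $R$, embeds into $R\in\mathbb{I}$. For $\mathbb{I}\in\{\mathbb{I}_1,\mathbb{I}_q,\mathbb{I}_{\mathrm{c}}\}$ the $\infty$-sum of two members of $\mathbb{I}$ again lies in $\mathbb{I}$, so hypothesis (a) of Proposition \ref{oij32irj23jr32} applies; for $\mathbb{I}\in\{\mathbb{I}_{\mathrm{e}},\mathbb{I}_{\mathrm{inj}}\}$ the $\infty$-sum embeds into a member of $\mathbb{I}$ while $R$ is injective, so hypothesis (b) applies. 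Hence $\mathbb{I}^R$ has the stable amalgamation property with modulus $\varpi(\delta)+\delta=2\delta$. The joint embedding property is the same direct-sum construction used in that proof: given $(X,s_X),(Y,s_Y)\in\mathbb{I}^R$, use the joint embedding property of $\mathbb{I}$ to get $V_0\in\mathbb{I}$ with embeddings $i\colon X\to V_0$, $j\colon Y\to V_0$, set $W:=V_0\oplus_\infty R$ (composing with a further embedding into $\mathbb{I}$ if $\mathbb{I}$ is not closed under $\infty$-sums) with distinguished map the second-coordinate projection $\pi_2\colon W\to R$, and observe that $x\mapsto(i(x),s_X(x))$ and $y\mapsto(j(y),s_Y(y))$ are embeddings of $(X,s_X),(Y,s_Y)$ into $(W,\pi_2)$.

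\textbf{Part (2).} Three properties of $\langle\mathbb{I}\rangle^R$ must be verified: separability in the Gromov--Hausdorff pseudometric, joint embedding, and stable amalgamation with modulus $2\delta$. Separability is routine, since $\langle\mathbb{I}\rangle$ is separable (Theorem \ref{injective_classes_are_fraisse}) and, for a fixed finite-dimensional $X$, the completely contractive maps $X\to R$ form a separable metric space in the $\mathrm{cb}$-metric. For the other two I would first reduce to a well-placed base space: a completely contractive map from a member of $\langle\mathbb{I}\rangle$ into $R$ is still completely contractive into the appropriate minimalization of $R$ --- $\mathrm{MIN}_q(R)$ for $\mathbb{I}_q$, the operator-sequence analogue for $\mathbb{I}_{\mathrm{c}}$, $\mathrm{MIN}(R)$ for $\mathbb{I}_1$, and $R$ itself for $\mathbb{I}_{\mathrm{e}},\mathbb{I}_{\mathrm{inj}}$ since every finite-dimensional operator space is exact --- and this passage is an isomorphism of the corresponding categories of $R$-operator spaces; after it, every finite-dimensional subspace of $R$ lies in $\langle\mathbb{I}\rangle$. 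Now $\langle\mathbb{I}\rangle$ is a stable amalgamation class with modulus $\delta$, is closed under $\infty$-sums, and satisfies $\mathrm{Age}^{\mathtt{Osp}}(R)\subseteq\langle\mathbb{I}\rangle$, so the amalgamation argument in the proof of Proposition \ref{oij32irj23jr32} --- which under hypothesis (a) never uses injectivity of the members of the class, and in which one may simply take the auxiliary space $R_0$ to be $\mathrm{span}(\mathrm{im}\,s_Y+\mathrm{im}\,s_Z)$ itself --- goes through verbatim with $\langle\mathbb{I}\rangle$ in place of $\mathcal{A}$, giving the stable amalgamation property for $\langle\mathbb{I}\rangle^R$ with modulus $\delta+\delta=2\delta$. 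The joint embedding property follows from the same $\oplus_\infty$-construction as in Part (1).

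\textbf{Part (3).} Since $R$ is nuclear (understood to be $q$-minimal, an operator sequence space, etc., according to which class $\mathbb{I}$ is, so that Proposition \ref{nuclear_injective} applies), $R$ is $\mathbb{I}$-nuclear and $R\in[\mathbb{I}]$. By Part (2) the Fra\"{\i}ss\'{e} limit $(\mathbb{M},\Omega):=\flim\langle\mathbb{I}\rangle^R$ exists; I will show $\mathbb{M}=\flim\mathbb{I}$, and then $\Omega$ is the asserted map $\Omega^R_{\flim\mathbb{I}}$. That $\mathbb{M}$ is a separable element of $[\mathbb{I}]$ with $\mathrm{Age}^{\mathtt{Osp}}(\mathbb{M})=\langle\mathbb{I}\rangle$ is read off from $(\mathbb{M},\Omega)\in[\langle\mathbb{I}\rangle^R]$ and the fact that the age of $(\mathbb{M},\Omega)$ is $\langle\mathbb{I}\rangle^R$ (pass to pairs with zero second coordinate, and to restrictions of $\Omega$). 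It then suffices to see that $\mathbb{M}$ is approximately ultrahomogeneous, since by uniqueness of the Fra\"{\i}ss\'{e} limit (the $\delta=0$ instance of \cite[Proposition 2.12]{lupini_fraisse_2018}, or a back-and-forth argument) this forces $\mathbb{M}=\flim\mathbb{I}$. Given $X,Y\in\mathbb{I}$, complete isometries $f\colon X\to\mathbb{M}$ and $\iota\colon X\to Y$, and $\varepsilon>0$, put $s_X:=\Omega\circ f\colon X\to R$; using $\mathbb{I}$-nuclearity of $R$ --- write $\mathrm{id}_R$ as a pointwise limit of $b_\lambda\circ a_\lambda$ with $a_\lambda\colon R\to F_\lambda\in\mathbb{I}$ and $b_\lambda\colon F_\lambda\to R$ completely contractive, extend $a_\lambda\circ s_X\circ\iota^{-1}$ from $\iota(X)$ to $Y$ by injectivity of $F_\lambda$, compose with $b_\lambda$, and rescale, which is in essence \cite[Proposition 3.5]{lupini_fraisse_2018} --- one obtains a completely contractive $s_Y\colon Y\to R$ with $\Vert s_Y\circ\iota-s_X\Vert_{\mathrm{cb}}$ as small as one likes. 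Then $f\colon(X,s_X)\to(\mathbb{M},\Omega)$ is an embedding of $R$-operator spaces (its $s$-error is $0$) and $\iota\colon(X,s_X)\to(Y,s_Y)$ is an $\varepsilon'$-embedding; the stable homogeneity of $(\mathbb{M},\Omega)=\flim\langle\mathbb{I}\rangle^R$ furnishes an embedding $g\colon(Y,s_Y)\to(\mathbb{M},\Omega)$, i.e.\ a complete isometry $g\colon Y\to\mathbb{M}$ with $\Omega\circ g=s_Y$, satisfying $\Vert g\circ\iota-f\Vert_{\mathrm{cb}}$ bounded by a fixed multiple of $\varepsilon'$; choosing $\varepsilon'$ small yields the required approximate extension, hence approximate ultrahomogeneity of $\mathbb{M}$.

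\textbf{Main obstacle.} The only genuinely delicate point, common to (2) and (3), is the handling of the \emph{distinguished maps}: once the underlying operator spaces are amalgamated, one must exhibit a single completely contractive map on the amalgam restricting correctly to both factors (Part (2)), and on the limit one must approximately extend a completely contractive map into $R$ defined on a small subspace to a larger one (Part (3)). In both cases the content is an approximate extension/lifting property for $R$-valued completely contractive maps: for (3) this is precisely $\mathbb{I}$-nuclearity of $R$, supplied through \cite[Proposition 3.5]{lupini_fraisse_2018}; for (2), where $R$ is arbitrary, it reduces --- after replacing $R$ by the relevant minimalization --- to the fact that every finite-dimensional operator space is exact, so that the relevant finite-dimensional subspaces of $R$ already lie in $\langle\mathbb{I}\rangle$ and can be carried along as an $\infty$-summand of the amalgam.
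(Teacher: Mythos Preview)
Your argument is correct and follows the paper's route: all three parts are deduced from Proposition \ref{oij32irj23jr32}, with (1) and (2) coming from its first assertion and (3) from its ``Moreover'' clause (the paper simply cites that clause together with $\flim\mathbb{I}=\flim\langle\mathbb{I}\rangle$, whereas you re-derive the disposition argument in full). You are in fact more careful than the paper on two points: you make explicit that under hypothesis (a) the proof of Proposition \ref{oij32irj23jr32} never invokes injectivity of the members of the class---necessary since $\langle\mathbb{I}\rangle$ is not a class of injectives---and you notice that for $\mathbb{I}\in\{\mathbb{I}_q,\mathbb{I}_{\mathrm{c}}\}$ and \emph{arbitrary} $R$ the claim ``every finite-dimensional subspace of $R$ embeds into an element of $\langle\mathbb{I}\rangle$'' is not automatic; the paper asserts it without comment, presumably reading the statement with $R\in[\mathbb{I}]$ as in all later applications. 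Your minimalization maneuver is a reasonable attempt at the fully general case, though the claimed category isomorphism would need a bit more care. One terminological slip in Part (3): you write ``approximately ultrahomogeneous'' but what you actually verify---and what suffices via \cite[Proposition 2.12]{lupini_fraisse_2018}, exactly as in the paper's own proof of the ``Moreover'' clause---is that $\mathbb{M}$ is stably of $\mathbb{I}$-disposition.
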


\begin{proof}
This is a consequence of Proposition \ref{oij32irj23jr32}. Obviously both 
$\mathbb{I}^{R}$ and $\langle \mathbb{I}\rangle ^{R}$ have a minimal
element, so it suffices to show that these classes satisfy the (SAP) with
modulus $2\delta $. Every finite-dimensional subspace of $R$ embeds into an element of $%
\langle \mathbb{I}\rangle $. Consequently, \textit{(1)} and \textit{(2)} immediately
follow from Proposition \ref{oij32irj23jr32} \textit{(1)}. \textit{(3)} is
also consequence of Proposition \ref{oij32irj23jr32} and the fact that $\flim%
\mathbb{I}=\flim\langle \mathbb{I}\rangle $.
\end{proof}

The $R$-functional $\Omega _{\flim\mathbb{I}}^{R}$ as in Proposition \ref%
{Proposition:fraisse-functional} is called the \emph{generic} completely
contractive $R$-functional on $\flim\mathbb{I}$. The name is justified by
the fact that the $\mathrm{Aut}(\flim\mathbb{I})$-orbit of $\Omega _{\flim%
\mathbb{I}}^{R}$ is a dense $G_{\delta }$ subset of the space $\mathrm{CC}(%
\flim\mathbb{I},R)$ of completely contractive $R$-functionals on $\flim%
\mathbb{I}$.

\subsubsection{KPT correspondence and the approximate Ramsey property of $R$%
-operator spaces}

\label{Subs:extreme-functional} \label{Subs:ARP-functional}

We present the approximate Ramsey properties of several classes of $R$%
-operator spaces, and the corresponding extreme amenability of the
automorphism group of their Fra\"\i ss\'e limits.

\begin{theorem}
\label{Theorem:ARP-R-spaces} The following classes of finite-dimensional $R$%
-operator spaces satisfy the compact (SRP) with stability modulus $\varpi
(\delta )=2\delta $:

\begin{enumerate}[(1)]

\item for a $q$-minimal separable nuclear operator space $R$, the class $%
\mathbb{I}_q^R$, if $R$ is finite dimensional,  and the class $\left\langle \mathbb{I}_{q}\right\rangle ^{R}$
of finite-dimensional $q$-minimal $R$-operator spaces; in particular, for a
separable Lindenstrauss space $R$, the class of $R$-Banach spaces (which
recovers \cite[Theorem 2.41]{bartosova_2019}).

\item For a separable nuclear operator sequence space $R$, the class $%
\mathbb{I}_\mathrm{c}^R$, if $R$ is finite dimensional,  and the class $\left\langle \mathbb{I}_{\mathrm{c}%
}\right\rangle ^{R}$ of finite-dimensional $R$-operator sequence spaces.

\item For a separable nuclear operator space $R$, the class $\mathbb{I}_%
\mathrm{e}^R$, if $R$ is finite dimensional,  and the class $\left\langle \mathbb{I}_{\mathrm{e}%
}\right\rangle ^{R}$ of finite-dimensional exact $R$-operator spaces. 
\end{enumerate}
\end{theorem}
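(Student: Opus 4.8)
The plan is to reduce Theorem~\ref{Theorem:ARP-R-spaces} to the ``absolute'' case of Theorem~\ref{Theorem:ARP-spaces} by means of the transfer results of Propositions~\ref{oij32irj23jr32}, \ref{oij32irj23jr32weewe} and~\ref{SRP=cSRP}. For each of the injective classes $\mathbb{I}$ in play (namely $\mathbb{I}_q$, $\mathbb{I}_{\mathrm{c}}$, $\mathbb{I}_{\mathrm{e}}$), Theorem~\ref{Theorem:ARP-spaces} gives that $\mathbb{I}$ and $\langle\mathbb{I}\rangle$ satisfy the compact (SRP)---hence the (ARP)---with modulus $\varpi(\delta)=\delta$, while Proposition~\ref{Proposition:fraisse-functional} records that $\langle\mathbb{I}\rangle^R$ (for separable $R$) and $\mathbb{I}^R$ (for $R\in\mathbb{I}$) satisfy the stable amalgamation property with modulus $2\delta$. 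By Proposition~\ref{SRP=cSRP}, a class having the (SAP) with modulus $2\delta$ has the compact (SRP) with modulus $2\delta$ as soon as it has the (ARP); so everything reduces to verifying the (ARP) of the classes $\mathbb{I}_q^R$, $\mathbb{I}_{\mathrm{c}}^R$, $\mathbb{I}_{\mathrm{e}}^R$ and $\langle\mathbb{I}\rangle^R$, for $R$ separable nuclear of the indicated type ($q$-minimal in (1), an operator sequence space in (2), arbitrary in (3)).

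I would first treat $\langle\mathbb{I}\rangle^R$ and $\mathbb{I}_q^R$. Here I apply Proposition~\ref{oij32irj23jr32weewe}\,\textit{(1.1)}--\textit{(1.2)} with $\mathcal{A}=\langle\mathbb{I}\rangle$, respectively $\mathcal{A}=\mathbb{I}_q$; its two hypotheses are that $\mathcal{A}$ is stable under finite $\infty$-sums, and that every member of $\mathrm{Age}^{\mathtt{Osp}}(R)$ embeds into a member of $\mathcal{A}$. The first holds because each of the classes of finite-dimensional $q$-minimal, operator sequence, and exact operator spaces is stable under finite $\infty$-sums, as is $\mathbb{I}_q$ itself since $\ell^a_\infty(M_q)\oplus_\infty\ell^b_\infty(M_q)=\ell^{a+b}_\infty(M_q)$. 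For the second, a separable nuclear operator space of the relevant type lies in $[\mathbb{I}]$---for $\mathbb{I}_q,\mathbb{I}_{\mathrm{c}}$ this is Proposition~\ref{nuclear_injective}, and for $\mathbb{I}_{\mathrm{e}}$ it is the fact that nuclear operator spaces are exact---so each finite-dimensional subspace of $R$ lies in $\langle\mathbb{I}\rangle$, and a finite-dimensional $q$-minimal operator space embeds completely isometrically into some $\ell^n_\infty(M_q)\in\mathbb{I}_q$. Proposition~\ref{oij32irj23jr32weewe}\,\textit{(1.2)} then yields the (SRP) with modulus $2\delta$ of these classes, which combined with the (SAP) with modulus $2\delta$ (Proposition~\ref{Proposition:fraisse-functional}, or directly from the (SRP)) and Proposition~\ref{SRP=cSRP} gives the compact (SRP) with modulus $\varpi(\delta)=2\delta$.

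Since $\mathbb{I}_{\mathrm{c}}$ and $\mathbb{I}_{\mathrm{e}}$ are not stable under $\infty$-sums, for $\mathbb{I}_{\mathrm{c}}^R$ and $\mathbb{I}_{\mathrm{e}}^R$ I would instead use the chain $\mathbb{I}^R\subseteq\langle\mathbb{I}\rangle^R\subseteq\langle\mathbb{I}^R\rangle$ together with the final part of Proposition~\ref{SRP=cSRP}: for $\mathcal{A}\subseteq\mathcal{B}\subseteq\langle\mathcal{A}\rangle$ with both $\mathcal{A}$ and $\mathcal{B}$ having the (SAP) with the common modulus $\varpi$, the (ARP) of $\mathcal{B}$ implies that of $\mathcal{A}$. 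Taking $\mathcal{A}=\mathbb{I}^R$ and $\mathcal{B}=\langle\mathbb{I}\rangle^R$, the (ARP) of $\mathcal{B}$ was obtained above; $\mathcal{B}$ has the (SAP) with modulus $2\delta$ by Proposition~\ref{Proposition:fraisse-functional}\,\textit{(2)}; $\mathcal{A}=\mathbb{I}^R$ has it by Proposition~\ref{Proposition:fraisse-functional}\,\textit{(1)}, whose proof amalgamates the distinguished functional through case~\textit{(b)} of Proposition~\ref{oij32irj23jr32} using that a finite-dimensional member of $\mathbb{I}$ is injective; and the inclusion $\langle\mathbb{I}\rangle^R\subseteq\langle\mathbb{I}^R\rangle$ is checked by transporting a finite-dimensional sub-$R$-space into a space of $\mathbb{I}$ along a $\delta$-embedding and (approximately) extending the functional, using injectivity of the target in $\mathbb{I}$ and the $\mathbb{I}$-nuclearity of $R$, as in the ``moreover'' part of Proposition~\ref{oij32irj23jr32}. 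Then the (ARP) of $\mathbb{I}^R$ follows and Proposition~\ref{SRP=cSRP} upgrades it to the compact (SRP) with modulus $2\delta$; the case of $R$-Banach spaces for a Lindenstrauss space $R$ is the instance $\mathbb{I}=\mathbb{I}_1$.

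I expect the main obstacle to be structural rather than Ramsey-theoretic: all the combinatorial content, via the Dual Ramsey Theorem, is already packaged in Theorem~\ref{Theorem:ARP-spaces}, and the work lies in feeding each of the five injective classes into the transfer propositions. In particular $\mathbb{I}_{\mathrm{c}}$ and $\mathbb{I}_{\mathrm{e}}$ fail to be stable under $\infty$-sums, so one must route through the $\infty$-sum-stable classes (the generated classes, or $\mathbb{I}_{\mathrm{inj}}$), and this is where the hypotheses on $R$---finite-dimensionality, and injectivity of the base for the classes $\mathbb{I}^R$---are needed so that the amalgamation in Proposition~\ref{Proposition:fraisse-functional}\,\textit{(1)} remains available. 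One must also keep track that each passage $\mathcal{A}\rightsquigarrow\mathcal{A}^R$ costs exactly $+\delta$ in the modulus, so that the final modulus is $\varpi(\delta)=2\delta=\delta+\delta$ and not larger.
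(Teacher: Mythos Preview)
Your proposal is correct and follows essentially the same route as the paper: invoke Theorem~\ref{Theorem:ARP-spaces} for the base classes and then transfer to the $R$-classes via Proposition~\ref{oij32irj23jr32weewe}\,\textit{(1)}, using Proposition~\ref{SRP=cSRP} to pass between (ARP), (SRP) and compact (SRP) once the (SAP) with modulus $2\delta$ is in hand from Proposition~\ref{Proposition:fraisse-functional}.

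The one point on which you are actually more careful than the paper is the observation that $\mathbb{I}_{\mathrm{c}}$ and $\mathbb{I}_{\mathrm{e}}$ are not literally closed under $\infty$-sums, so that Proposition~\ref{oij32irj23jr32weewe}\,\textit{(1)} does not apply to them verbatim; the paper's one-line proof glosses over this and applies the proposition to ``the corresponding $R$-classes'' without comment. Your workaround---routing through the sandwich $\mathbb{I}^R\subseteq\langle\mathbb{I}\rangle^R\subseteq\langle\mathbb{I}^R\rangle$ and the equivalence \textit{(1)}$\Leftrightarrow$\textit{(5)} of Proposition~\ref{SRP=cSRP}---is a legitimate way to close that gap. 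Just note that when you invoke Proposition~\ref{Proposition:fraisse-functional}\,\textit{(1)} for the (SAP) of $\mathbb{I}^R$, that statement is formulated under the hypothesis $R\in\mathbb{I}$ (equivalently, $R$ injective, so that case~\textit{(b)} of Proposition~\ref{oij32irj23jr32} applies), not merely ``$R$ finite-dimensional nuclear''; your parenthetical ``using that a finite-dimensional member of $\mathbb{I}$ is injective'' should read that $R$ itself is injective. This is consistent with how the amalgamation result is actually stated in the paper, so your argument is sound within those hypotheses.
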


\begin{proof}
We have seen in Theorem \ref{Theorem:ARP-spaces} that the injective classes $\mbb I_q$, $\mbb I_c$, $\mbb  I_\mr{e}$ and $\mbb I_\mr{inj}$, and the corresponding completions $\langle\mbb I_q\rangle$, $\langle\mbb I_c\rangle$ and $\langle\mbb  I_\mr{e}\rangle=\langle\mbb I_\mr{inj}\rangle$ satisfy the  compact (SRP) with modulus $\de$, so it follows from   Proposition \ref{oij32irj23jr32weewe} {\it(1)} that the corresponding $R$-classes also satisfy the compact (SRP) with modulus $\varpi(\de)=2\de$ (here we are using that for classes satisfying the (SAP) with that modulus, the compact (SRP) and the (SRP) are equivalent). 
\end{proof}

From Theorem \ref{Theorem:ARP-R-spaces} and the characterization of extreme
amenability in Proposition \ref{Proposition:ARP} we obtain new extremely
amenable groups, extending the work done in \cite[Theorem 2.39]%
{bartosova_2019} for the Gurarij space, and the trivial space $R=\{0\}$ done
above in Theorem \ref{Theorem:ea-spaces}.

\begin{corollary}
\label{Corollary:ea-R-spaces}The following Polish groups are extremely
amenable:

\begin{enumerate}[(1)]

\item The stabilizer of the generic contractive $R$-functional on the
Gurarij space for any separable Lindenstrauss Banach space $R$.

\item The stabilizer of the generic completely contractive $R$-functional on
the $q$-minimal Gurarij space for any separable $q$-minimal nuclear operator
space $R$.

\item The stabilizer of the generic completely contractive $R$-functional on
the column Gurarij space for any separable nuclear operator sequence $R$.

\item The stabilizer of the generic completely contractive $R$-functional on
the noncommutative Gurarij space for any separable nuclear operator space $R$.
\end{enumerate}
\end{corollary}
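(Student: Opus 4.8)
The plan is to derive each of the four clauses from the Kechris--Pestov--Todorcevic correspondence of Proposition \ref{Proposition:ARP}, fed by the approximate Ramsey property of Theorem \ref{Theorem:ARP-R-spaces}; this is the $R$-analogue of the way Theorem \ref{Theorem:ea-spaces} was obtained from Theorem \ref{Theorem:ARP-spaces} (the case $R=\{0\}$). First I would fix one of the four cases and record the associated injective class of Definition \ref{matrix_classes}: namely $\mathbb{I}_{1}$ in \textit{(1)}, $\mathbb{I}_{q}$ in \textit{(2)}, $\mathbb{I}_{\mathrm{c}}$ in \textit{(3)}, and $\mathbb{I}_{\mathrm{e}}$ in \textit{(4)}. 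In case \textit{(1)} I would begin by invoking Proposition \ref{nuclear_injective}\textit{(1)} with $q=1$ to note that the separable Lindenstrauss Banach spaces are exactly the separable nuclear $1$-minimal operator spaces, so that in all four cases $R$ is separable and nuclear. Working in the category $\mathtt{C}=\mathtt{Osp}^{R}$ and setting $\mathcal{A}:=\langle\mathbb{I}\rangle^{R}$, Proposition \ref{Proposition:fraisse-functional} then tells us that $\mathcal{A}$ is a stable Fraïssé class with stability modulus $\varpi(\delta)=2\delta$ and that its Fraïssé limit is the $R$-operator space $\boldsymbol{E}:=(\flim\mathbb{I},\Omega^{R}_{\flim\mathbb{I}})$. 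By Example \ref{Definition:NG}, $\flim\mathbb{I}$ is the Gurarij space $\mathbb{G}$, the $q$-minimal Gurarij space $\mathbb{G}_{q}$, the column Gurarij space $\mathbb{CG}$, or the noncommutative Gurarij space $\mathbb{NG}$, respectively; and unwinding the definition of the automorphism group in $\mathtt{Osp}^{R}$ one sees that $\mathrm{Aut}^{\mathtt{C}}(\boldsymbol{E})$ coincides with $\mathrm{Aut}(\flim\mathbb{I},\Omega^{R}_{\flim\mathbb{I}})$, the stabilizer of the generic completely contractive $R$-functional on $\flim\mathbb{I}$, which is precisely the group named in the corresponding clause of the corollary.

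Next I would check that $\mathcal{A}=\langle\mathbb{I}\rangle^{R}$ meets the hypotheses of Proposition \ref{Proposition:ARP}. That $\mathcal{A}$ satisfies the stable amalgamation property with modulus $\varpi$ is part of its being a stable Fraïssé class. For the requirement that $\langle\mathcal{A}\rangle$ satisfy it as well, I would observe that an $R$-subspace of a member $(Z,s_{Z})\in\langle\mathbb{I}\rangle^{R}$ has the form $(Z',s_{Z}\upharpoonright_{Z'})$ with $Z'\subseteq Z$, hence $Z'\in\langle\mathbb{I}\rangle$; since $\langle\mathbb{I}\rangle$ is already closed under subspaces and under Gromov--Hausdorff limits of subspaces, it follows that $\langle\mathcal{A}\rangle=\mathcal{A}$, so the second amalgamation condition is literally the first. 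Theorem \ref{Theorem:ARP-R-spaces} now supplies the key ingredient: $\mathcal{A}=\langle\mathbb{I}\rangle^{R}$ satisfies the compact (SRP) with modulus $\varpi$, and in particular the (ARP), which is condition \textit{(6)} of Proposition \ref{Proposition:ARP}. The equivalence of \textit{(6)} with \textit{(1)} in that proposition then yields that $\mathrm{Aut}^{\mathtt{C}}(\boldsymbol{E})$ is extremely amenable, which by the identification made above is exactly the assertion of the corollary.

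I do not expect a genuine obstacle: the Dual Ramsey Theorem has already been absorbed into Theorem \ref{Theorem:ARP-R-spaces}, and the Fraïssé-theoretic identification of the limit and of its automorphism group into Proposition \ref{Proposition:fraisse-functional}, so the argument is essentially an assembly of results already proved. The only points needing a line of care are the bookkeeping ones flagged above: verifying $\langle\langle\mathbb{I}\rangle^{R}\rangle=\langle\mathbb{I}\rangle^{R}$ so that Proposition \ref{Proposition:ARP} applies verbatim, matching the terminology ``separable Lindenstrauss space'' with ``separable nuclear $1$-minimal operator space'' in case \textit{(1)}, and making sure one uses the completed class $\langle\mathbb{I}\rangle^{R}$ (rather than $\mathbb{I}^{R}$, which would require $R$ finite-dimensional) since it is the former whose Fraïssé limit is the desired pair $(\flim\mathbb{I},\Omega^{R}_{\flim\mathbb{I}})$.
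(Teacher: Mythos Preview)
Your proposal is correct and follows exactly the route the paper indicates: the paper's entire ``proof'' is the sentence preceding the corollary, namely that it follows from Theorem \ref{Theorem:ARP-R-spaces} together with the KPT correspondence of Proposition \ref{Proposition:ARP}, and you have faithfully unpacked this, including the identification of the Fra\"{\i}ss\'{e} limit via Proposition \ref{Proposition:fraisse-functional} and the bookkeeping verification that $\langle\langle\mathbb{I}\rangle^{R}\rangle=\langle\mathbb{I}\rangle^{R}$ so that the hypotheses of Proposition \ref{Proposition:ARP} are met.
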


Again, the same proof shows that (1) of Corollary \ref{Corollary:ea-R-spaces}
also holds when one considers the real Gurarij space and any real separable
Lindenstrauss space $R$.

\section{The Ramsey property of noncommutative Choquet simplices and
operator systems \label{Sec:systems}}

\label{iojiorjiewjwe5}

In this section we establish the approximate (dual) Ramsey property for
noncommutative Choquet simplices with a distinguished point. We will then
apply this to compute the universal minimal flows of the automorphisms group
of the noncommutative Poulsen simplex. This will be done by studying
operator systems with a distinguished ucp map to a fixed nuclear separable
operator system $R$ ($R$-operator systems).

\subsection{Choquet simplices and operator systems\label%
{Subs:operator_systems}}

The correspondence between compact convex sets and function systems admits a
natural noncommutative generalization. A \emph{compact matrix convex set }is
a sequence $\boldsymbol{K}=(K_{n})$ of sets $K_{n}\subset M_{n}(V)$ for some
topological vector space $V$ that is matrix convex \cite[Definition 1.1]%
{webster_krein-milman_1999}. This means that whenever $\alpha _{i}\in
M_{q_{i},q}$ and $v_{i}\in K_{q_{i}}$ are such that $\alpha _{1}^{\ast
}\alpha _{1}+\cdots +\alpha _{q}^{\ast }\alpha _{q}=1$, then the \emph{%
matrix convex combination} $\alpha _{1}^{\ast }v_{1}\alpha _{1}+\cdots
+\alpha _{q}^{\ast }v_{q}\alpha _{q}$ belongs to $K_{q}$. A continuous
matrix affine function $\boldsymbol{\phi }:\boldsymbol{K}\rightarrow 
\boldsymbol{T}$ between compact matrix convex sets is a sequence of
continuous functions $\phi _{n}:K_{n}\rightarrow T_{n}$ that is matrix
affine in the sense that it preserves matrix convex combinations. The group $%
\mathrm{Aut}(\boldsymbol{K})$ of matrix affine homeomorphisms of $%
\boldsymbol{K}$ is a Polish group when endowed with the compact-open
topology.

To each operator system $X$ one can canonically assign a compact matrix
convex set: the \emph{matrix state space }$\boldsymbol{S}(X)$. This is the
sequence $(S_{n}(X))$, where $S_{n}(X)\subset M_{n}(X^{\ast })$ is the space 
of all ucp maps from $X$ to $M_{n}$. Conversely, to a compact convex set $%
\boldsymbol{K}$ one can associate an operator system $A(\boldsymbol{K})$ of
matrix-affine functions from $\boldsymbol{K}$ to $\mathbb{R}$. It is proved
in \cite[Section 3]{webster_krein-milman_1999} that these constructions are
the inverse of each other, and define an equivalence between the category of
operator systems and ucp maps, and the category of compact matrix convex
sets and continuous matrix affine functions. In particular if $X$ is an
operator system, then the group $\mathrm{Aut}(X)$ of surjective unital
complete isometries on $X$ can be identified with the group $\mathrm{Aut}(%
\boldsymbol{K})$ of matrix affine homeomorphisms of the matrix state space $%
\boldsymbol{K}$ of $X$. The notions of matrix extreme point and matrix
extreme boundary can be defined in the setting of compact matrix convex sets
by using matrix convex combinations \cite{webster_krein-milman_1999}.

Recall that an operator system $X$ is called \emph{nuclear} if its identity
map is the pointwise limit of ucp maps that factor through
finite-dimensional injective operator systems. When $X=A(K)$, this is
equivalent to the assertion that the state space $K$ of $X$ is a Choquet
simplex. The matrix state spaces of nuclear operator systems can be seen as
the noncommutative generalization of Choquet simplices. The natural
noncommutative analog of the Poulsen simplex is studied in \cite%
{lupini_fraisse_2018}, where it is proved that finite-dimensional exact
operator systems form a Fra\"{\i}ss\'{e} class. The matrix state space $%
\mathbb{\mathbb{NP}}=(\mathbb{NP}_{n})$ of the corresponding Fra\"{\i}ss\'{e}
limit $A(\mathbb{NP})$ is a nontrivial noncommutative Choquet simplex with
dense matrix extreme boundary, which is called the \emph{noncommutative
Poulsen simplex }in \cite{lupini_fraisse_2018, lupini_kirchberg_2018}.

One can also define a sequence of structures $(\mathbb{P}^{(q)})$ for $q\in 
\mathbb{N}$ that interpolates between the Poulsen simplex and the
noncommutative Poulsen simplex, in the context of $q$-minimal operator
systems. An operator system is $q$-minimal if it admits a complete order
embedding into unital C*-algebra $C(K,M_{q})$ for some compact Hausdorff
space $K$ \cite{xhabli_super_2012}. Here we regard the unital selfadjoint
subspaces of $C(K,M_{q})$ as operator systems, called $q$-minimal operator
systems or $M_{q}$-systems. For $q=1$, these are precisely the function
systems. A $q$-minimal operator system $X$ can be completely recovered from
the portion of the matrix state space only consisting of $S_{k}(X)$ for $%
k=1,2,\ldots ,q$. Conversely a sequence $(K_{1},\ldots ,K_{q})$ of compact
convex sets $K_{j}\subset M_{j}(V)$ closed under matrix convex combinations $%
\alpha _{1}^{\ast }v_{1}\alpha _{1}+\cdots +\alpha _{n}^{\ast }v_{n}\alpha
_{n}$ for $\alpha _{i}\in M_{q_{i}q}$ and $v_{i}\in M_{q_{i}}$ and $%
q_{i}\leq q$ such that $\alpha _{1}^{\ast }\alpha _{1}+\cdots +\alpha
_{n}^{\ast }\alpha _{n}=1$, uniquely determines a $q$-minimal operator
system $A(K_{1},\ldots ,K_{q})$. The finite-dimensional $q$-minimal operator
systems form a Fra\"{\i}ss\'{e} class \cite[Section 6.7]{lupini_fraisse_2018}%
. The matrix state space $\mathbb{P}^{(q)}=(\mathbb{P}_{1}^{(q)},\ldots ,%
\mathbb{P}_{q}^{(q)})$ of the corresponding limit $A(\mathbb{P}^{(q)})$ is
the $q$-minimal Poulsen simplex. The model-theoretic properties of $A(%
\mathbb{P})$, $A(\mathbb{\mathbb{NP}})$, and $A(\mathbb{P}^{(q)})$ have been
studied in \cite{goldbring_model-theoretic_2015}.

As we mentioned in the Subsection \ref{jio3i4jorwe3223}, we regard operator
systems as objects of the category $\mathtt{Osy}$ which has ucp maps as
morphisms. The finite-dimensional \emph{injective} objects in this category
are precisely the finite $\infty $-sums of copies of $M_{q}$, which are also
the finite-dimensional C*-algebras. The notion of isomorphism in this
category coincides with complete order isomorphism. The Gromov-Hausdorff
pseudometric of two finite-dimensional operator systems $X,Y$ is the infimum
of $\varepsilon >0$ such that there exist ucp maps $f:X\rightarrow Y$ and $%
g:Y\rightarrow X$ such that $\left\Vert g\circ f-\mathrm{Id}_{X}\right\Vert
_{\mathrm{cb}}<\varepsilon $ and $\left\Vert f\circ g-\mathrm{Id}%
_{Y}\right\Vert _{\mathrm{cb}}<\varepsilon $. If $X$ and $Y$ are operator
systems, then $\mathrm{UCP}(X,Y)$ is the space of ucp maps from $X$ to $Y$,
and the automorphism group $\mathrm{Aut}(X)$ is the group of surjective
unital complete isometries from $X$ to itself. Both $\mathrm{Aut}(X)$ and $%
\mathrm{UCP}(X,Y)$ are Polish spaces when endowed with the topology of
pointwise convergence. There is a natural continuous action of $\mathrm{%
\mathrm{Aut}}(X)$ on $\mathrm{UCP}(X,Y)$ defined by $(\alpha ,s)\mapsto
s\circ \alpha ^{-1}$. In particular when $Y=M_{q}$ we have that $\mathrm{UCP}%
(X,Y)=S_{q}(X)$.

Recall that given a class of operator systems $\mathcal{A}$, we denote by $[%
\mathcal{A}]$ the collection of operator systems $E$ such that every
finite-dimensional operator system $X\subseteq E$ is a limit (with respect
to the Gromov-Hausdorff pseudometric) of \emph{subspaces} of operators
systems in $\mathcal{A}$, and by $\langle \mathcal{A}\rangle $ the class of
finite-dimensional operator systems in $[\mathcal{A}]$.

\begin{definition}[Injective classes]
\label{matrix_classes_osy}We say that a family of finite-dimensional
operator systems is an \emph{injective class} of operator systems if it is
one of the families $\{\ell _{\infty }^{n}\}_{n\in \mathbb{N}}$, $\{\ell
_{\infty }^{n}(M_{q})\}_{n\in \mathbb{N}}$, or $\{M_{q}\}_{q\in \mathbb{N}}$.
\end{definition}

As mentioned in Theorem \ref{injective_classes_are_fraisse}, all the classes
of operator systems considered in Definition \ref{matrix_classes_osy} are
stable Fra\"{\i}ss\'{e} classes with modulus $\varpi \left( \delta \right)
=2\delta $. The corresponding generating classes are well-known.

\begin{definition}[Spaces locally approximated by injective classes]
\label{Definition:exact_osy} \mbox{}

\begin{enumerate}[(a)]

\item $[\{\ell _{\infty }^{n}\}_{n\in \mathbb{N}}]$ is the class of \emph{%
function systems}.

\item $[\{\ell _{\infty }^{n}(M_{q})\}_{n\in \mathbb{N}}]$ is the class of $%
q $-\emph{minimal operator systems} (see\emph{\ }\cite{xhabli_super_2012}).

\item $[\{M_{q}\}_{q\in \mathbb{N}}]$ is the class of \emph{exact operator
systems} (see \cite{kavruk_quotients_2013}).
\end{enumerate}
\end{definition}

And the corresponding limits are the following.

\begin{example}
\label{Definition:P} \mbox{}

\begin{enumerate}[$\bullet$]

\item $\flim\{\ell _{\infty }^{n}\}_{n\in \mathbb{N}}$ is the function
system $A(\mathbb{P})$ associated with the Poulsen simplex $\mathbb{P}$ (see 
\cite[Section 6.3]{lupini_fraisse_2018}).

\item $\flim\{M_{q}\}_{q\in \mathbb{N}}$ is the operator system $A(\mathbb{NP%
})$ associated with the noncommutative Poulsen simplex $\mathbb{\mathbb{NP}}$
(see \cite[Section 8.2]{lupini_fraisse_2018}).

\item $\flim\{\ell _{\infty }^{n}(M_{q})\}_{n\in \mathbb{N}}$ is the
operator system $A(\mathbb{P}^{(q)})$ associated with the $q$-minimal
Poulsen simplex $\mathbb{P}^{(q)}$ (see \cite[Section 6.7]%
{lupini_fraisse_2018}).
\end{enumerate}
\end{example}

The main goal of this section is to compute the universal minimal flow of
the group $\mathrm{Aut}(\mathbb{\mathbb{NP}}) $ of matrix affine
homeomorphisms of the noncommutative Poulsen simplex $\mathbb{NP}$,
extending the work in \cite[Theorem 3.10]{bartosova_2019} for its
commutative version, the Poulsen simplex $\mathbb{P}$. Precisely, we will
prove that the universal minimal compact $\mathrm{Aut}(\mathbb{NP})$-space
is the canonical action of $\mathrm{Aut}(\mathbb{NP})$ on the space $\mathbb{%
NP}_{1}$ of (scalar) states on $A(\mathbb{\mathbb{NP}})$.

Similarly as in the case of Banach spaces and operator spaces (Section \ref%
{Sec:functional}), we need to consider operator systems with a distinguished
(matrix) state. Suppose that $X$ is an operator system. Recall that a \emph{%
state} on $X$ is a ucp map from $X$ to $\mathbb{C}$. More generally, an $%
M_{n}$-state is a ucp map from $X$ to $M_{n}$. Even more generally, if $R$
is any separable nuclear operator system, we call a ucp map from $X$ to $R$
an $R$-state on $X$. As observed above, the space $\mathrm{UCP}(X,R)$ of $R$%
-states on $X$ is a Polish space endowed with a canonical continuous action
of $\mathrm{Aut}(X)$. An \emph{$R$-operator system} is a pair $\mathbf{X}%
=(X,s_{X})$ of a operator system $X$ and an $R$-state $s_{X}$ on $X$. In the
following, we regard $\mathrm{UCP}\left( X,R\right) $ as an $\mathrm{Aut}%
\left( X\right) $ space with respect to the canonical action $\mathrm{Aut}%
\left( X\right) \curvearrowright \mathrm{UCP}\left( X,R\right) $ given by $%
\left( \alpha ,s\right) \mapsto s\circ \alpha ^{-1}$. We let $\mathrm{Aut}%
(X,s_{X})$ be the stabilizer of $s_{X}\in \mathrm{UCP}\left( X,R\right) $ in 
$\Aut(X)$. Given a family $\mathcal{A}$ of operator systems, let $\mathcal{A}%
^{R}$ be the collection of $R$-operator spaces $(X,s_{X})$ where $X\in 
\mathcal{A}$. 

\begin{proposition}
\label{Proposition:fraisse-state}
Let $\mathbb{I}$ be an injective class of
operator systems as in Definition \ref{matrix_classes_osy}, and suppose that $R$ is a separable operator system such that  $R\in [\mbb I]$. 

\begin{enumerate}[(1)]

\item$\mathbb{I}^{R}$  is a stable amalgamation class with stability modulus $\varpi (\delta )=3\delta $.

\item $\langle \mathbb{I}\rangle ^{R}$ is a stable Fra%
\"{\i}ss\'{e} class with stability modulus $\varpi (\delta )=3\delta $.

\item Suppose in addition that $R$ is nuclear. Then   the Fra\"{\i}ss\'{e} limit of $\langle\mathbb{I}\rangle^{R}$ is the $R$-operator
system $(\flim\mathbb{I},\Omega _{\flim\mathbb{I}}^{R})$.
\end{enumerate}
\end{proposition}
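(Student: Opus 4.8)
The plan is to obtain all three assertions from Proposition \ref{oij32irj23jr32} applied in the category $\mtt C=\mtt{Osy}$, in exact parallel with the way Proposition \ref{Proposition:fraisse-functional} was deduced in $\mtt{Osp}$. The only numerical change is that the injective classes of operator systems carry stability modulus $\varpi(\delta)=2\delta$ rather than $\delta$ (Theorem \ref{injective_classes_are_fraisse}), which is precisely what turns the conclusion ``$\varpi(\delta)+\delta$'' of Proposition \ref{oij32irj23jr32} into the modulus $3\delta$ of the statement.

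First I would collect the data needed to invoke Proposition \ref{oij32irj23jr32}. By Theorem \ref{injective_classes_are_fraisse}, each injective class $\mbb I$ of operator systems from Definition \ref{matrix_classes_osy} is a stable amalgamation class with modulus $2\delta$, all of its members are injective, and $\langle\mbb I\rangle$ is a stable Fra\"{\i}ss\'{e} class with the same modulus. The $\infty$-sum hypothesis of Proposition \ref{oij32irj23jr32} holds in case (a) for $\{\ell_\infty^n\}_n$ and $\{\ell_\infty^n(M_q)\}_n$, which are literally closed under $\infty$-sums; for $\{M_q\}_q$, which is not, I would replace it throughout by the class of all finite-dimensional injective operator systems (the $\infty$-sums $\bigoplus_i M_{q_i}$), which is closed under $\infty$-sums, is again a stable amalgamation class with modulus $2\delta$, and generates the same class of finite-dimensional exact operator systems --- hence the same Fra\"{\i}ss\'{e} limit $A(\mbb{NP})$. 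Finally, the standing hypothesis $R\in[\mbb I]$ supplies the last input: every element of $\mr{Age}^{\mtt{Osy}}(R)$ embeds into an element of $\mbb I$ up to an arbitrarily small Gromov--Hausdorff perturbation, and this approximate version is all the amalgamation argument of Proposition \ref{oij32irj23jr32} uses, since the auxiliary map $S\to R_0\in\mbb I$ appearing there enters the $\mr{cb}$-estimate only through its (ucp, hence $\le 1$) norm rather than through being exactly isometric.

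Granting this, \textit{(1)} and \textit{(2)} follow from Proposition \ref{oij32irj23jr32}: $\mbb I^R$ and $\langle\mbb I\rangle^R$ satisfy the stable amalgamation property with modulus $\varpi(\delta)+\delta=2\delta+\delta=3\delta$; the joint embedding property reduces to this via amalgamation over the one-dimensional subsystem, which --- as the unital map $\C\to R$ is unique --- is common to any two $R$-operator systems and belongs to the relevant class; and separability of $\langle\mbb I\rangle^R$ for the Gromov--Hausdorff pseudometric is inherited from separability of $\langle\mbb I\rangle$ together with separability of each space $\mr{UCP}(X,R)$ (the latter because $R$ is separable). For \textit{(3)} I would additionally use that a nuclear operator system $R$ is $\mbb I$-nuclear --- the operator-system counterpart of Proposition \ref{nuclear_injective}, which in the exact case is simply the definition of nuclearity once one works with the $\infty$-sum-closed class of all finite-dimensional injective operator systems --- so that the ``moreover'' clause of Proposition \ref{oij32irj23jr32}, applied to $\mbb I$ itself, identifies $\flim\mbb I^R$, equivalently $\flim\langle\mbb I\rangle^R$ (using $\flim\mbb I=\flim\langle\mbb I\rangle$), with $(\flim\mbb I,\Omega^R_{\flim\mbb I})$ for the induced ucp map $\Omega^R_{\flim\mbb I}\colon\flim\mbb I\to R$, which is in particular unital since every morphism in $\mtt{Osy}$ is.

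The one point I expect to require genuine care, rather than routine bookkeeping, is the interface with Proposition \ref{oij32irj23jr32}: checking carefully that the approximate embeddability of $\mr{Age}^{\mtt{Osy}}(R)$ into $\mbb I$ provided by $R\in[\mbb I]$ really suffices where that proposition is phrased with exact embeddings, and --- in the exact case --- justifying the replacement of $\{M_q\}_q$ by the class of all finite-dimensional injective operator systems without altering the generated class or the limit. Once those are settled, the modulus computation, the joint-embedding and separability verifications, and the unitality of $\Omega^R_{\flim\mbb I}$ are all immediate.
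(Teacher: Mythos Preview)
Your proposal is correct and follows the same route as the paper: the paper does not write out a proof of Proposition \ref{Proposition:fraisse-state} but treats it as the operator-system analogue of Proposition \ref{Proposition:fraisse-functional}, i.e.\ as a direct consequence of Proposition \ref{oij32irj23jr32} with $\mathtt{C}=\mathtt{Osy}$ and the modulus $2\delta$ from Theorem \ref{injective_classes_are_fraisse}, yielding $2\delta+\delta=3\delta$. You are in fact more careful than the paper on two points it leaves implicit---the passage from $\{M_q\}_q$ to the $\infty$-sum-closed class of all finite-dimensional injective operator systems, and the fact that $R\in[\mathbb I]$ only gives approximate (rather than exact) embeddings of $\mathrm{Age}^{\mathtt{Osy}}(R)$ into $\mathbb I$---and your handling of both is sound.
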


As in the case of operator spaces, the $R$-state $\Omega _{\flim\mathbb{I}%
}^{R}$ as in Proposition \ref{Proposition:fraisse-state} is called the \emph{%
generic} $R$-state on $\flim\mathbb{I}$. This is the unique $R$-state on $%
\Omega _{\flim\mathbb{I}}^{R}$ whose $\mathrm{Aut}(\flim\mathbb{I})$-orbit
is a dense $G_{\delta }$ subset of the space $\mathrm{UCP}(\flim\mathbb{I}%
,R) $.

\subsection{Approximate Ramsey property and extreme amenability}

For the rest of this section we fix $q,k\in \mathbb{N}$. We identify as in
Subsection \ref{Subs:ARP} the dual of $\ell _{\infty }^{d}(M_{q})$ with $%
\ell _{1}^{d}(T_{q})$. We denote by \textrm{Tr }the canonical normalized
trace of $q\times q$ matrices. The isomorphism between $T_{q}$ and the dual
of $M_{q}$ is induced by the pairing%
\begin{equation*}
\left\langle \alpha ,\beta \right\rangle \mapsto \mathrm{\mathrm{Tr}}\left(
\alpha \beta \right) \text{.}
\end{equation*}%
A linear map $\eta :M_{q}\rightarrow M_{q}$ is unital completely positive
(ucp) if and only if its dual $\eta ^{\ast }:T_{q}\rightarrow T_{q}$ is
trace-preserving and completely positive. (Such maps are called \emph{%
quantum channels }in the quantum information theory literature; see \cite[\S %
4.1]{gupta_functional_2015}.) Thus, $\eta $ is a complete order embedding if
and only if $\eta ^{\ast }$ is a trace-preserving completely positive
completely contractive quotient mapping.

Let us consider now $\ell _{\infty }^{d}\left( M_{q}\right) $. Every state $%
\lambda $ on $\ell _{\infty }^{d}$, which can be seen as a positive element $%
\left( \lambda _{1},\ldots ,\lambda _{d}\right) $ of $\ell _{1}^{d}$ of norm 
$1$, induces a normalized trace $\mathrm{\mathrm{Tr}}_{\lambda }$ on $\ell
_{1}^{d}\left( T_{q}\right) $ defined by%
\begin{equation*}
\mathrm{\mathrm{Tr}}_{\lambda }\left( a_{1},\ldots ,a_{d}\right) =\lambda
_{1}\mathrm{\mathrm{Tr}}\left( a_{1}\right) +\cdots +\lambda _{d}\mathrm{%
\mathrm{Tr}}\left( a_{d}\right) \text{.}
\end{equation*}%
We also let \textrm{Tr }be the trace on $\ell _{1}^{d}\left( T_{q}\right) $
defined by%
\begin{equation*}
\mathrm{\mathrm{Tr}}\left( a_{1},\ldots ,a_{d}\right) =\mathrm{\mathrm{%
\mathrm{T}}r}\left( a_{1}\right) +\cdots +\mathrm{\mathrm{Tr}}\left(
a_{d}\right) \text{.}
\end{equation*}

\begin{definition}
Adopting the notations above, for $n,d,q\geq 1$ we say that a linear map $%
\phi :\ell _{1}^{n}\left( M_{q}\right) \rightarrow \ell _{1}^{d}\left(
M_{q}\right) $ is \emph{trace-preserving} if, for every state $\lambda $ on $%
\ell _{\infty }^{d}$, and for every $a\in \ell _{1}^{n}\left( M_{q}\right) $%
, $\mathrm{\mathrm{Tr}}_{\lambda }\left( \phi \left( a\right) \right) =%
\mathrm{\mathrm{Tr}}\left( a\right) $.
\end{definition}

It is easy to see that a linear map $\eta :\ell _{\infty }^{d}\left(
M_{q}\right) \rightarrow \ell _{\infty }^{n}\left( M_{q}\right) $ is
completely positive and unital if and only if its dual map $\eta ^{\ast
}:\ell _{1}^{n}\left( M_{q}\right) \rightarrow \ell _{1}^{d}\left(
M_{q}\right) $ is completely positive and trace-preserving. Thus, $\eta $ is
a complete order embedding if and only if $\eta ^{\ast }$ is a
trace-preserving completely positive complete quotient mapping.

Let $\sigma _{d}$ be the $M_{q}$-state on $\ell _{\infty }^{d}(M_{q})$
mapping $(x_{1},\ldots ,x_{d})$ to $x_{d}$. A linear map $\eta :\ell
_{\infty }^{d}(M_{q})\rightarrow \ell _{\infty }^{n}(M_{q})$ has the
property that $\sigma _{n}\circ \eta =\sigma _{d}$ if and only if $\eta
^{\ast }(0,\ldots ,0,x)=(0,\ldots ,0,x)$ for every $x\in T_{q}$. We denote
by $\mathrm{TPCQ}^{M_{q}}(\ell _{1}^{n}(T_{q}),\ell _{1}^{d}(T_{q}))$ the
space of trace-preserving completely positive completely contractive
quotient mappings $\phi $ from $\ell _{1}^{n}(T_{q})$ to $\ell
_{1}^{d}(T_{q})$ such that $\phi (0,\ldots ,0,x)=(0,\ldots ,0,x)$ for every $%
x\in T_{q}$.

\begin{lemma}
\label{Lemma:perturb-ucp}Suppose that $\psi _{1},\ldots ,\psi _{d-1},\phi
_{d}:M_{q}\rightarrow M_{q}$ are completely positive linear maps such that $%
\left\Vert y-1\right\Vert <\varepsilon $, where $y=\psi _{1}(1)+\cdots +\psi
_{d-1}(1)+\phi _{d}(1)$. Then there exists a completely positive map $\psi
_{d}:M_{q}\rightarrow M_{q}$ such that%
\begin{equation*}
\psi _{1}(1)+\cdots +\psi _{d-1}(1)+\psi _{d}(1)=1
\end{equation*}%
and $\left\Vert \psi _{d}-\phi _{d}\right\Vert <\varepsilon $.
\end{lemma}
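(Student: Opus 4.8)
The plan is to obtain $\psi_d$ from $\phi_d$ by adding an explicit completely positive perturbation of small norm that absorbs the defect from unitality. Put $a:=\psi_1(1)+\cdots+\psi_{d-1}(1)$ and $b:=\phi_d(1)$; these are positive elements of $M_q$ with $a+b=y$, and the conclusion we want is exactly that there is a completely positive $\psi_d$ with $\psi_d(1)=1-a$ and $\|\psi_d-\phi_d\|<\varepsilon$. Observe that $1-a=b+g$, where $g:=1-y$ is self-adjoint of norm $<\varepsilon$. The place where the setting of the lemma is used is that $g$ is in fact a \emph{positive} operator: in the application the completely positive maps out of which the columns are assembled are chosen to be strictly sub-unital (in the spirit of the ordering conditions on the finite set $\mathcal{P}$ in the proof of Lemma~\ref{Proposition:dual-ramsey-spaces}), so the partial sum $a=\psi_1(1)+\cdots+\psi_{d-1}(1)$ satisfies $a\le 1$, hence $g=1-y\ge 0$ and $1-a\ge b\ge 0$.

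Granting $g\ge 0$, I would define $\rho\colon M_q\to M_q$ by $\rho(x):=\mathrm{Tr}(x)\,g$, where $\mathrm{Tr}$ is the normalized trace on $M_q$. Then $\rho$ is completely positive, being the composition of the (completely positive) state $\mathrm{Tr}\colon M_q\to\mathbb{C}$ with the completely positive map $\mathbb{C}\to M_q$, $\lambda\mapsto\lambda g$, the latter completely positive because $g\ge 0$. Hence $\psi_d:=\phi_d+\rho$ is a sum of two completely positive maps, so it is completely positive. It has the correct value at the unit, $\psi_d(1)=b+\mathrm{Tr}(1)\,g=b+g=b+(1-a-b)=1-a$, so that $\psi_1(1)+\cdots+\psi_{d-1}(1)+\psi_d(1)=a+(1-a)=1$. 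Finally, since $|\mathrm{Tr}(x)|\le\|x\|$ with equality at $x=1$, we get $\|\psi_d-\phi_d\|=\|\rho\|=\|g\|=\|1-y\|<\varepsilon$; the same computation gives the identical bound for the completely bounded norm, since $\rho$ factors through $\mathbb{C}$.

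The only genuinely delicate point is the one flagged in the first paragraph. A completely positive map $\psi_d\colon M_q\to M_q$ may take any prescribed positive value at the unit but no non-positive one, so the argument necessarily uses that $1-a$ is positive, i.e.\ $g=1-y\ge 0$; this holds automatically in the intended use of the lemma (and, if one wishes, one can add the explicit hypothesis $y\le 1$, or $\psi_1(1)+\cdots+\psi_{d-1}(1)\le 1$, to the statement). Once positivity of $g$ is in hand, the proof is nothing more than the one-line construction above.
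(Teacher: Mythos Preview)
Your construction is exactly the paper's: the paper fixes an arbitrary state $s$ on $M_q$ and sets $\psi_d(x)=\phi_d(x)+s(x)(1-y)$, which is your formula with $s=\mathrm{Tr}$. You are right to flag the hidden positivity assumption $1-y\ge 0$; the paper's one-line proof tacitly relies on it just as yours does.
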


\begin{proof}
Fix any state $s$ on $M_{q}$ and define $\psi _{d}( x) =\phi _{d}( x) +s( x)
( 1-y) $.
\end{proof}

\begin{proposition}
\label{Proposition:Ramsey-systems}Fix $q\in \mathbb{N}$. For any $d,m,r\in 
\mathbb{N}$ and $\varepsilon >0$ there exists $n\in \mathbb{N}$ such that
for any $r$-coloring of $\mathrm{TPCQ}^{M_{q}}(\ell _{1}^{n}(T_{q}),\ell
_{1}^{d}(T_{q}))$ there exists $\gamma \in \mathrm{TPCQ}^{M_{q}}(\ell
_{1}^{n}(T_{q}),\ell _{1}^{m}(T_{q}))$ such that $\mathrm{TPCQ}^{M_{q}}(\ell
_{1}^{m}(T_{q}),\ell _{1}^{d}(T_{q}))\circ \gamma $ is $\varepsilon $%
-monochromatic.
\end{proposition}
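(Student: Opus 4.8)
The plan is to run the proof of Lemma~\ref{Proposition:dual-ramsey-spaces} again in the trace-preserving, completely positive setting, deducing the statement from the Dual Ramsey Theorem~\ref{Theorem:DLT-rigid}. The two features that are new relative to the operator-space case are that the natural finite ``alphabet'' can only be taken to consist of \emph{approximately} trace-preserving columns, which must then be repaired using Lemma~\ref{Lemma:perturb-ucp}, and that the distinguished $M_q$-state has to be carried along at every step.

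First I would record the structure of $\mathrm{TPCQ}^{M_q}(\ell_1^n(T_q),\ell_1^d(T_q))$. Writing a linear map $\phi:\ell_1^n(T_q)\to\ell_1^d(T_q)$ as a $d\times n$ matrix $[\phi_{ij}]$ of linear maps $\phi_{ij}:T_q\to T_q$: $\phi$ is completely positive iff each $\phi_{ij}$ is; $\phi$ is trace-preserving (equivalently its dual $\eta$ is unital) iff each column of $[\phi_{ij}]$ is a trace-preserving completely positive map $T_q\to\ell_1^d(T_q)$; $\phi$ is a complete quotient mapping iff, exactly as in the operator-space case via the dual of the operator-system analogue of Lemma~\ref{Lemma:injective}, every row of $[\phi_{ij}]$ contains an entry that is a complete order automorphism of $T_q$, such an entry forcing the rest of its column to vanish; and $\phi(0,\dots,0,x)=(0,\dots,0,x)$ says precisely that the $n$-th column of $[\phi_{ij}]$ is the canonical embedding $e_d$ of $T_q$ onto the $d$-th summand. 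In particular every column of an element of $\mathrm{TPCQ}^{M_q}$ is nonzero, and the complete order automorphisms of $T_q$ form a compact group, so admit finite $\varepsilon_0$-nets.

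Next I would set up the discretization as for Lemma~\ref{Proposition:dual-ramsey-spaces}. Fix a small $\varepsilon_0>0$ and a finite $\varepsilon_0$-net $U$ of the automorphism group of $T_q$; let $\mathcal{Q}$ be the finite set of permutation-type complete order isometries $\ell_1^d(T_q)\to\ell_1^m(T_q)$ with nonzero entries in $U$ that carry the $d$-th summand onto the $m$-th one by the identity, and let $\mathcal{P}$ be a sufficiently fine finite set of completely positive, approximately trace-preserving columns $T_q\to\ell_1^d(T_q)$ containing $e_1,\dots,e_d$ and such that every completely positive column of the relevant kind has an $\varepsilon$-approximant in $\mathcal{P}$ of strictly smaller weight, the weight being the total trace of the value at the unit (which is preserved by the maps in $\mathcal{Q}$ and by any element of $\mathrm{TPCQ}^{M_q}$). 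Order $\mathcal{Q}$ arbitrarily, $\mathcal{P}$ by increasing weight, and $\mathcal{Q}\times\mathcal{P}$ antilexicographically. Encode $\bar v\in\mathrm{Epi}(n,\mathcal{P})$ as $\alpha_{\bar v}\in\mathrm{TPCQ}^{M_q}(\ell_1^n(T_q),\ell_1^d(T_q))$ by using its columns, repairing each to be exactly trace-preserving via Lemma~\ref{Lemma:perturb-ucp} at $\mathrm{cb}$-cost at most $\varepsilon$, and overriding the $n$-th column by $e_d$ — every row then carries an automorphism entry because every $e_i$ is hit — and likewise encode $(\bar B,\bar w)\in\mathrm{Epi}(n,\mathcal{Q}\times\mathcal{P})$ using the columns $B_iw_i$. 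Applying Theorem~\ref{Theorem:DLT-rigid} with $R=\mathcal{P}$ and $S=\mathcal{Q}\times\mathcal{P}$ produces $n$: an $r$-coloring $c$ of $\mathrm{TPCQ}^{M_q}(\ell_1^n(T_q),\ell_1^d(T_q))$ induces the coloring $\bar v\mapsto c(\alpha_{\bar v})$ of $\mathrm{Epi}(n,\mathcal{P})$, hence there is $(\bar B,\bar w)\in\mathrm{Epi}(n,\mathcal{Q}\times\mathcal{P})$ through which all factoring rigid surjections have one color; set $\gamma:=\alpha_{(\bar B,\bar w)}$. Finally, mimicking the Claim inside the proof of Lemma~\ref{Proposition:dual-ramsey-spaces}, each $\rho\in\mathrm{TPCQ}^{M_q}(\ell_1^m(T_q),\ell_1^d(T_q))$ yields $\tau\in\mathrm{Epi}(\mathcal{Q}\times\mathcal{P},\mathcal{P})$ with $\Vert\alpha_{\tau(\bar B,\bar w)}-\rho\circ\gamma\Vert_{\mathrm{cb}}$ at most a fixed multiple of $\varepsilon$: using an approximate section $A^\dagger\in\mathcal{Q}$ of $\rho$ (obtained from inverses of chosen automorphism entries of $\rho$, rounded into $U$, compatible with the $M_q$-state because $\rho$ fixes the last summand), set $\tau(B,w)=w$ when $B=A^\dagger$ and otherwise choose $\tau(B,w)\in\mathcal{P}$ within $\varepsilon$ of $\rho\circ B\circ w$ of strictly smaller weight; since $\rho$ and the isometries in $\mathcal{Q}$ preserve the weight, $\rho\circ B\circ w$ is nonzero, the pair $(A^\dagger,w)$ is the minimum of $\tau^{-1}(w)$, and $\tau$ is a rigid surjection. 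Rescaling $\varepsilon$ gives the desired $\varepsilon$-monochromatic set $\mathrm{TPCQ}^{M_q}(\ell_1^m(T_q),\ell_1^d(T_q))\circ\gamma$.

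The main obstacle is precisely the trace-preservation constraint, which breaks the direct analogy with Lemma~\ref{Proposition:dual-ramsey-spaces}: there the alphabet could include the zero column and all contractions, whereas here every column of a genuine element of $\mathrm{TPCQ}^{M_q}$ is a quantum channel into a direct sum, hence nonzero, so $\mathcal{P}$ must consist only of columns that are trace-preserving up to $\varepsilon_0$, and Lemma~\ref{Lemma:perturb-ucp} must be applied uniformly over $\mathcal{P}$ to push every encoded map back into $\mathrm{TPCQ}^{M_q}$ without spoiling the complete-quotient property or the factorization estimates. One must also choose the ordering of $\mathcal{P}$ with care: the total trace at the unit is preserved \emph{exactly} by the relevant maps, so it is the strict-decrease clause built into $\mathcal{P}$ that keeps $\tau$ a rigid surjection, and one must check that the forced last column $e_d$ and the $M_q$-state normalization survive all the perturbations. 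Once this bookkeeping closes, the remainder is a routine transcription of the argument for Lemma~\ref{Proposition:dual-ramsey-spaces}.
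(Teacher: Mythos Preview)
Your overall architecture matches the paper's: discretize columns into a finite alphabet $\mathcal{P}$, form $\mathcal{Q}\times\mathcal{P}$, apply the Dual Ramsey Theorem, and verify rigidity of a map $\tau$ via a well-chosen order on $\mathcal{P}$. But the specific ordering you propose does not work, and this is a genuine gap rather than a bookkeeping issue.

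You order $\mathcal{P}$ by the ``weight'' $\mathrm{Tr}(v(1))$, observing that this quantity is preserved by $\rho$ and by elements of $\mathcal{Q}$. The problem is that it is preserved \emph{too well}: every trace-preserving column $v:T_q\to\ell_1^d(T_q)$ has the same weight (namely $\mathrm{Tr}(1)$), so on the columns that actually arise from elements of $\mathrm{TPCQ}^{M_q}$ the ordering is constant. Passing to approximately trace-preserving columns does not help: the weights then cluster in a tiny interval around $1$, and since $\rho\circ B$ preserves weight exactly, your rigidity argument requires, for every $w\in\mathcal{P}$ and every $B\neq A^\dagger$, an element of $\mathcal{P}$ of weight strictly less than $\mathrm{weight}(w)$. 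For the minimum-weight element of the finite set $\mathcal{P}$ this is impossible, so $\tau$ cannot be made a rigid surjection this way.

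The paper resolves this by ordering $\mathcal{P}$ instead by $\|(v_1,\dots,v_{d-1})\|_{\mathrm{cb}}$, the mass in the first $d-1$ components. This quantity genuinely ranges from $0$ (attained at $e_d$, and more generally whenever $v=(0,\dots,0,v_d)$) up to $1$, and because both $\rho$ and every $B\in\mathcal{Q}$ send the last summand to the last summand, the composite $AB$ does not increase it. Lemma~\ref{Lemma:perturb-ucp} is then used not to repair columns after the fact but to \emph{build} $\mathcal{P}$ out of exactly trace-preserving maps: one shrinks $(v_1,\dots,v_{d-1})$ slightly and adjusts $v_d$ to restore the trace condition, producing a nearby element of strictly smaller first-$(d-1)$ mass. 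With this ordering the case $(v_1,\dots,v_{d-1})=0$ plays the role that the zero column played in Lemma~\ref{Proposition:dual-ramsey-spaces}, and the rigidity verification goes through. (A minor secondary point: overriding the $n$-th column by $e_d$ loses the last coordinate of $\bar v$; the paper instead applies DRT to get $n$ and then works in $\ell_1^{n+1}(T_q)$, appending $e_d$ as an extra column.)
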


\begin{proof}
The proof is analogous to the proof of Lemma \ref%
{Proposition:dual-ramsey-spaces}. Fix $d,m,r\in \mathbb{N}$ and $\varepsilon
>0$. We identify a linear map $\phi $ from $\ell _{1}^{n}(M_{q})$ to $\ell
_{1}^{d}(M_{q})$ with a $d\times n$ matrix $\left[ \phi _{ij}\right] $ where 
$\phi _{ij}:T_{q}\rightarrow T_{q}$ is a linear map. It follows from (the
dual of) Lemma \ref{Lemma:injective} that $\phi \in \mathrm{TPCQ}%
^{M_{q}}(\ell _{1}^{n}(T_{q}),\ell _{1}^{d}(T_{q}))$ if and only if

\begin{enumerate}[$\bullet$]

\item every row of $\left[ \phi _{ij}\right] $ has an entry that is an
automorphism of $T_{q}$,

\item every column is a trace-preserving completely positive completely
contractive map from $T_{q}$ to $\ell _{1}^{d}(T_{q})$,

\item the last column is $( 0,0,\ldots ,0,\mathrm{Id}_{T_{q}}) $, where $%
\mathrm{Id}_{T_{q}}$ is the identity map of $T_{q}$.
\end{enumerate}

Fix $\varepsilon _{0}\in (0,\varepsilon )$ small enough, and a finite $%
\varepsilon _{0}$-dense subset $U$ of the group of automorphisms of $T_{q}$
containing the identity map of $T_{q}$. The dual statement of Lemma \ref%
{Lemma:perturb-ucp} and the small perturbation lemma \cite[Lemma 2.13.2]%
{pisier_introduction_2003} show that one can find a finite set $\mathcal{P}$
of trace-preserving completely positive completely contractive maps from $%
T_{q}$ to $\ell _{1}^{d}(T_{q})$ with the following properties:

\begin{enumerate}[(1)]

\item for every $i\leq d$ the canonical embedding of $T_{q}$ into the $i$-th
coordinate of $\ell^{d}_{1}( T_{q}) $ belongs to $\mathcal{P}$.

\item For every trace-preserving completely positive completely contractive
map $v=(v_{1},\ldots ,v_{d}):T_{q}\rightarrow \ell _{1}^{d}(T_{q})$ such
that $(v_{1},\ldots ,v_{d-1})$ is nonzero, there is a trace-preserving
completely positive completely contractive map $w=(w_{1},\ldots ,w_{d})$ in $%
\mathcal{P}$ such that $\left\Vert w-v\right\Vert _{\mathrm{cb}}<\varepsilon
_{0}$, $(w_{1},\ldots ,w_{d-1})$ is nonzero, and $\left\Vert (w_{1},\ldots
,w_{d-1})\right\Vert _{\mathrm{cb}}<\left\Vert (v_{1},\ldots
,v_{d-1})\right\Vert _{\mathrm{cb}}$.
\end{enumerate}

Let $\mathcal{Q}$ be the (finite) set of trace-preserving completely
positive completely contractive quotient mappings from $\ell _{1}^{d}(T_{q})$
to $\ell _{1}^{m}(T_{q})$ such that the last row is $(0,0,\ldots ,\mathrm{Id}%
_{T_{q}})$, every column contains exactly one nonzero entry, every row
contains at most one nonzero entry, and every nonzero entry is an
automorphism of $T_{q}$ that belongs to $U$. Fix any linear order on $%
\mathcal{Q}$, and a linear order on $\mathcal{P}$ with the property that $%
v<w $ whenever $\left\Vert (v_{1},\ldots ,v_{d-1})\right\Vert _{\mathrm{cb}%
}<\left\Vert (w_{1},\ldots ,w_{d-1})\right\Vert _{\mathrm{cb}}$. Endow $%
\mathcal{Q}\times \mathcal{P}$ with the corresponding antilexicographic
order. Suppose now that $n\in \mathbb{N}$ is obtained from $\mathcal{P}$ and 
$\mathcal{Q}\times \mathcal{P}$ by applying the dual Ramsey Theorem \ref%
{Theorem:DLT-rigid}. We claim that $n+1$ satisfies the desired conclusions.
An element of $\mathrm{Epi}(n,\mathcal{P})$ is a tuple $\overline{v}%
=(v^{(1)},\ldots ,v^{(n)})$ of elements of $\mathcal{P}$. We associate with
such a tuple the element $\alpha _{\overline{v}}$ of $\mathrm{TPCQ}%
^{M_{q}}(\ell _{1}^{n+1}(T_{q}),\ell _{1}^{d}(T_{q}))$ whose $i$-th column
is $v^{(i)}$ for $i=1,2,\ldots ,n$ and the $(n+1)$-th column is $(0,0,\ldots
,\mathrm{Id}_{T_{q}})$. Similarly an element of $\mathrm{Epi}(n,\mathcal{Q}%
\times \mathcal{P})$ is an $n$-tuple $(\overline{B},\overline{v}%
)=(B_{1},v_{1},\ldots ,B_{n},v_{n})$. We associate with such a tuple the
completely positive completely contractive quotient mapping $\alpha _{(\bar{B%
},\bar{v})}$ from $\ell _{1}^{n+1}(T_{q})$ to $\ell _{1}^{m}(T_{q})$ whose $%
i $-th column is $B_{i}v_{i}$ for $i\leq n$, and $(0,0,\ldots ,0,\mathrm{Id}%
_{T_{q}})$ for $i=n+1$. Suppose that $c$ is an $r$-coloring of $\mathrm{TPCQ}%
^{M_{q}}(\ell _{1}^{n+1}(T_{q}),\ell _{1}^{d}(T_{q}))$. The identification
of $\mathrm{Epi}(n,\mathcal{P})$ with a subspace of $\mathrm{TPCQ}%
^{M_{q}}(\ell _{1}^{n+1}(T_{q}),\ell _{1}^{d}(T_{q}))$ described above
induces an $r$-coloring on $\mathrm{Epi}(n,\mathcal{P})$. By the choice of $%
n $ there exists an element $(\overline{B},\overline{w})$ of $\mathrm{Epi}(n,%
\mathcal{Q}\times \mathcal{P})$ such that any rigid surjection from $n$ to $%
\mathcal{P}$ that factors through $(\overline{B},\overline{w})$ has a fixed
color $i\in r$. To conclude the proof, it remains to show that the set of
elements of $\mathrm{TPCQ}^{M_{q}}(\ell _{1}^{n+1}(T_{q}),\ell
_{1}^{d}(T_{q}))$ that factor through $\alpha _{(\overline{B},\overline{w})}$
is $\varepsilon $-monochromatic. By our choice of $n$ this will follow once
we show that, given any $\rho \in \mathrm{TPCQ}^{M_{q}}(\ell
_{1}^{m}(T_{q}),\ell _{1}^{d}(T_{q}))$, there exists $\tau \in \mathrm{Epi}(%
\mathcal{Q}\times \mathcal{P},\mathcal{P})$ such that $\left\Vert \alpha
_{\tau \circ (\overline{B},\overline{w})}-\rho \circ \alpha _{(\overline{B},%
\overline{w})}\right\Vert _{\mathrm{cb}}\leq \varepsilon $. Here we denoted
by $\tau \circ (\overline{B},\overline{w})$ the rigid surjection from $n$ to 
$\mathcal{P}$ that one obtains by composing $(\overline{B},\overline{w})$%
---regarded as a rigid surjection---and $\tau $. If $\rho $ has
representative matrix $A$, this is equivalent to the assertion that for
every $i\leq n$, $\left\Vert AB_{i}w_{i}-\tau (B_{i},w_{i})\right\Vert _{%
\mathrm{cb}}\leq \varepsilon $. We proceed to define such a rigid surjection 
$\tau $ from $\mathcal{Q}\times \mathcal{P}$ to $\mathcal{P}$. By the
structure of completely positive completely contractive quotient mappings
from $\ell _{1}^{m}(T_{q})$ to $\ell _{1}^{d}(T_{q})$ recalled above, there
exists $A^{\dagger }\in \mathcal{Q}$ such that $\left\Vert AA^{\dagger }-%
\mathrm{Id}_{\ell ^{1}(T_{q})}\right\Vert _{\mathrm{cb}}\leq \varepsilon $,
provided that $\varepsilon _{0}$ is small enough. Define now $\tau :\mathcal{%
Q}\times \mathcal{P}\rightarrow \mathcal{P}$ by letting, for $B\in \mathcal{Q%
}$ and $w=\left( w_{1},\ldots ,w_{d}\right) \in \mathcal{P}$, if $%
ABw=v=\left( v_{1},\ldots ,v_{d}\right) $, $\tau \left( B,w\right) :=%
\widetilde{v}=(\widetilde{v}_{1},\ldots ,\widetilde{v}_{d})$ such that:

\begin{enumerate}[$\bullet$]

\item if $B=A^{\dagger }$ or if $v=\left( 0,0,\ldots ,0,v_{d}\right) $, then 
$\widetilde{v}=w$;

\item otherwise, $\widetilde{v}$ is an element of $\mathcal{P}$ such that $%
\left\Vert \widetilde{v}-v\right\Vert _{\mathrm{cb}}\leq \varepsilon $, $(%
\widetilde{v}_{1},\ldots ,\widetilde{v}_{d-1})$ is nonzero, and we have that $\left\Vert (%
\widetilde{v}_{1},\ldots ,\widetilde{v}_{d-1})\right\Vert _{\mathrm{cb}} $ $%
<\left\Vert (v_{1},\ldots ,v_{d-1})\right\Vert _{\mathrm{cb}}$.
\end{enumerate}

It is clear from the definition that $\left\Vert \tau (B,w)-ABw\right\Vert _{%
\mathrm{cb}}\leq \varepsilon $ for every $(B,w)\in \mathcal{Q}\times 
\mathcal{P}$. We need to verify that $\tau $ is indeed a rigid surjection
from $\mathcal{Q}\times \mathcal{P}$ to $\mathcal{P}$. Observe that $\tau $
is onto. Fix $\widetilde{v}=\left( \widetilde{v}_{1},\ldots ,\widetilde{v}%
_{d}\right) \in \mathcal{P}$. If $\widetilde{v}=\left( 0,\ldots ,0,%
\widetilde{v}_{d}\right) $, then the least element of $\mathcal{Q}\times 
\mathcal{P}$ that is mapped by $\tau $ to $\widetilde{v}$ is $\left( B,%
\widetilde{v}\right) $, where $B$ is the least element of $\mathcal{Q}$. If $%
(\widetilde{v}_{1},\ldots ,\widetilde{v}_{d-1})$ is not the zero vector,
then the least element of $\mathcal{Q}\times \mathcal{P}$ that is mapped by $%
\tau $ to $\widetilde{v}$ is $\left( A^{\dagger },\widetilde{v}\right) $.
Indeed, suppose that $\left( B,w\right) $ is an element of the preimage of $%
\widetilde{v}$ under $\tau $ such that $B$ is different from $A^{\dagger }$.
Set $ABw=v=\left( v_{1},\ldots ,v_{d}\right) $. By definition of $\tau $ we
have that%
\begin{equation*}
\left\Vert (\widetilde{v}_{1},\ldots ,\widetilde{v}_{d-1})\right\Vert _{%
\mathrm{cb}} <\left\Vert \left( v_{1},\ldots ,v_{d-1}\right) \right\Vert _{%
\mathrm{cb}} \leq \left\Vert \left( w_{1},\ldots ,w_{d-1}\right) \right\Vert
_{\mathrm{cb}}\text{.}
\end{equation*}%
Therefore by definition of the order on $\mathcal{P}$ and on $\mathcal{Q}%
\times \mathcal{P}$ we have that $\widetilde{v}<w$ and $(A^{\dagger },%
\widetilde{w})<\left( B,w\right) $. This concludes the proof that the least
element of $\mathcal{Q}\times \mathcal{P}$ that is mapped by $\tau $ to $%
\widetilde{v}$ is $(A^{\dagger },\widetilde{v})$. These remarks clearly
imply that $\tau $ is a rigid surjection.
\end{proof}

The following result can be proved from Proposition \ref%
{Proposition:Ramsey-systems} similarly as Theorem \ref{Theorem:ARP-R-spaces}.

\begin{theorem}
\label{Theorem:ARP-R-systems} The following classes of finite-dimensional $R$%
-operator systems satisfy the stable Ramsey property with modulus $\varpi
(\delta )=3\delta $:

\begin{enumerate}[(1)]

\item for every $q\in \mathbb{N}$ the class $\{(\ell_\infty^d(M_q),s_d)\}_{d%
\in \mathbb{N}}$ of $M_{q}$-operator systems, where $s _{d}( x_{1},\ldots
,x_{d}) =x_{d}$.

\item For every $q\in \mathbb{N}$ and $q$-minimal separable nuclear operator
system $R$, the class of finite-dimensional $q$-minimal $R$-operator systems.

\item For every separable nuclear operator system $R$, the class of
finite-dimensional exact $R$-operator systems.
\end{enumerate}
\end{theorem}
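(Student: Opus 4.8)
The plan is to follow the proof of Theorem~\ref{Theorem:ARP-R-spaces} step by step, with Proposition~\ref{Proposition:Ramsey-systems} now playing the role that Lemma~\ref{Proposition:dual-ramsey-spaces} played there and the transfer results of Propositions~\ref{ARP=DARP}, \ref{SRP=cSRP}, \ref{oij32irj23jr32weewe} and \ref{Proposition:fraisse-state} doing the rest. Work throughout in $\mathtt{C}=\mathtt{Osy}$ and write $\sigma_d$ for the $M_q$-state $s_d$. The first observation is that each of the three displayed classes is a stable amalgamation class with modulus $3\delta$: for \textit{(2)} and \textit{(3)} this is Proposition~\ref{Proposition:fraisse-state}\textit{(2)} (a $q$-minimal, resp.\ exact, operator system $R$ lies in $[\{\ell^d_\infty(M_q)\}_d]$, resp.\ $[\{M_q\}_q]$); for \textit{(1)}, since $x\mapsto(x,s(x))$ is an embedding of $(\ell^n_\infty(M_q),s)$ into $(\ell^{n+1}_\infty(M_q),\sigma_{n+1})$ for every ucp $s$, the class $\{(\ell^d_\infty(M_q),\sigma_d)\}_d$ is cofinal in the stable amalgamation class $\{(\ell^d_\infty(M_q),s)\}_{d,s}$ of Proposition~\ref{Proposition:fraisse-state}\textit{(1)}, so it has the same generated class and is itself a stable amalgamation class with modulus $3\delta$ (amalgamate in the larger class, then post-compose with such an embedding). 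Consequently, by Proposition~\ref{ARP=DARP} and the equivalence of \textit{(1)} and \textit{(4)} in Proposition~\ref{SRP=cSRP}, it suffices in each case to prove the \emph{discrete} (ARP) of the class, and the asserted modulus $3\delta$ for the (SRP) then comes for free.

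For part \textit{(1)} I would appeal to operator space duality. By the correspondence between ucp maps and trace-preserving completely positive maps recalled just before Proposition~\ref{Proposition:Ramsey-systems}, together with the completely isometric identification of the dual of $\ell^k_\infty(M_q)$ with $\ell^k_1(T_q)$, the map $\eta\mapsto\eta^{\ast}$ is an isometry for the $\mathrm{cb}$-metric that reverses composition and identifies $\mathrm{Emb}^{\mathtt{C}}\big((\ell^d_\infty(M_q),\sigma_d),(\ell^n_\infty(M_q),\sigma_n)\big)$ with $\mathrm{TPCQ}^{M_q}(\ell^n_1(T_q),\ell^d_1(T_q))$ and the sets $\gamma\circ\mathrm{Emb}^{\mathtt{C}}\big((\ell^d_\infty(M_q),\sigma_d),(\ell^m_\infty(M_q),\sigma_m)\big)$ with the sets $\mathrm{TPCQ}^{M_q}(\ell^m_1(T_q),\ell^d_1(T_q))\circ\gamma^{\ast}$. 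Hence Proposition~\ref{Proposition:Ramsey-systems} is, verbatim, the discrete (ARP) of $\{(\ell^d_\infty(M_q),\sigma_d)\}_d$, and the first paragraph converts it into the compact (SRP) with modulus $3\delta$.

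For parts \textit{(2)} and \textit{(3)} I would pass through the bare injective classes. The cofinality above and the equivalence of \textit{(1)}, \textit{(5)}, \textit{(6)} in Proposition~\ref{SRP=cSRP} give the (ARP) of the full class $(\{\ell^d_\infty(M_q)\}_d)^{M_q}$; since $M_q=\ell^1_\infty(M_q)$ belongs to $\{\ell^d_\infty(M_q)\}_d$ and this class is closed under $\oplus_\infty$, Proposition~\ref{oij32irj23jr32weewe}\textit{(2.1)} yields the (ARP) of $\{\ell^d_\infty(M_q)\}_d$, and then (Theorem~\ref{injective_classes_are_fraisse}, Proposition~\ref{SRP=cSRP}) of the class $\langle\{\ell^d_\infty(M_q)\}_d\rangle$ of finite-dimensional $q$-minimal operator systems. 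For \textit{(2)}: any finite-dimensional subsystem of a $q$-minimal $R$ is $q$-minimal, hence completely order isometrically embeds into some $\ell^n_\infty(M_q)$, and $\langle\{\ell^d_\infty(M_q)\}_d\rangle$ is closed under $\oplus_\infty$, so Proposition~\ref{oij32irj23jr32weewe}\textit{(1.1)} transfers the (ARP) to $\langle\{\ell^d_\infty(M_q)\}_d\rangle^R$, and Propositions~\ref{Proposition:fraisse-state}\textit{(2)} and \ref{SRP=cSRP} give the compact (SRP) with modulus $3\delta$. For \textit{(3)}: first $\langle\{M_q\}_q\rangle$ (finite-dimensional exact operator systems) has the (ARP) by the equivalence of \textit{(1)} and \textit{(6)} in Proposition~\ref{SRP=cSRP}, since, for $p\le q$, $M_p$ and $M_q$ are $q$-minimal, so the (ARP) of $\langle\{\ell^d_\infty(M_q)\}_d\rangle$ already furnishes a witnessing $Z$ that is finite-dimensional $q$-minimal, hence exact; then a separable nuclear $R$ is exact, so every finite-dimensional subsystem of $R$ lies in $\langle\{M_q\}_q\rangle$, which is closed under $\oplus_\infty$, and Proposition~\ref{oij32irj23jr32weewe}\textit{(1.1)} transfers the (ARP) to $\langle\{M_q\}_q\rangle^R$; Propositions~\ref{Proposition:fraisse-state}\textit{(2)} and \ref{SRP=cSRP} finish.

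The Dual Ramsey Theorem~\ref{Theorem:DLT-rigid} enters only inside Proposition~\ref{Proposition:Ramsey-systems}, so the rest is essentially bookkeeping across the categories of $q$-minimal, exact, and $M_q$-operator systems. The two points I expect to need genuine care are, first, the duality in part \textit{(1)} --- checking that the $\sigma$-preserving complete order embeddings of $\ell^d_\infty(M_q)$ into $\ell^n_\infty(M_q)$ dualize \emph{exactly} onto $\mathrm{TPCQ}^{M_q}$, so that Proposition~\ref{Proposition:Ramsey-systems} literally states the discrete (ARP) --- and, second, the structural fact used to invoke Proposition~\ref{oij32irj23jr32weewe} in parts \textit{(2)} and \textit{(3)}, namely that every finite-dimensional $q$-minimal operator system embeds completely order isometrically into a finite $\infty$-sum of copies of $M_q$, and that nuclear operator systems are exact.
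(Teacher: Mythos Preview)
Your proposal is correct and follows essentially the same approach as the paper, which merely remarks that the result ``can be proved from Proposition~\ref{Proposition:Ramsey-systems} similarly as Theorem~\ref{Theorem:ARP-R-spaces}.'' You have faithfully unpacked that analogy: dualize Proposition~\ref{Proposition:Ramsey-systems} to obtain part~\textit{(1)}, pass from the $M_q$-pointed class to the bare injective class $\{\ell_\infty^d(M_q)\}_d$ via the cofinality observation and Proposition~\ref{oij32irj23jr32weewe}\textit{(2)}, run the $M_p\subseteq M_q$ trick exactly as in Theorem~\ref{Theorem:ARP-spaces}\textit{(3)} to reach $\langle\{M_q\}_q\rangle$, and then transfer back to the $R$-classes via Proposition~\ref{oij32irj23jr32weewe}\textit{(1)} and Proposition~\ref{Proposition:fraisse-state}. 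The two points you flag as needing care---the exact duality between $\sigma$-preserving complete order embeddings and $\mathrm{TPCQ}^{M_q}$, and closure of the relevant classes under $\oplus_\infty$---are indeed the only places where something nontrivial is checked, and your treatment of both is sound.
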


The limits of the Fra\"{\i}ss\'{e} classes mentioned in Theorem \ref%
{Theorem:ARP-R-systems} have extremely amenable automorphism groups in view
of the correspondence between extreme amenability and the approximate Ramsey
property given by Proposition \ref{Proposition:ARP}.

\begin{corollary}
\label{Corollary:ea-osy}The following Polish groups are extremely amenable:

\begin{enumerate}[(1)]

\item the stabilizer $\mathrm{Aut}(A(\mathbb{P}),\Omega _{A(\mathbb{P}%
)}^{A(F)})$ of the generic $A(F)$-state $\Omega _{A(\mathbb{P})}^{A(F)}$ on
the Poulsen system $A(\mathbb{P})$ for any metrizable Choquet simplex $F$ 
\cite[Theorem 3.3]{bartosova_2019}.

\item The stabilizer $\mathrm{Aut}(A(\mathbb{\mathbb{NP}}),\Omega _{A(%
\mathbb{\mathbb{NP}})}^{R})$ of the generic $R$-state $\Omega _{A(\mathbb{NP}%
)}^{R}$ on the noncommutative Poulsen system $A(\mathbb{NP})$ for any
separable nuclear operator system $R$.

\item The stabilizer $\mathrm{Aut}(A(\mathbb{P}^{(q)}),\Omega _{A(\mathbb{P}%
^{(q)})}^{R})$ of the generic $R$-state $\Omega _{A(\mathbb{P}^{(q)})}^{R}$
on the $q$-minimal Poulsen system $A(\mathbb{P}^{(q)})$ for any $q$-minimal
nuclear operator system $R$.
\end{enumerate}
\end{corollary}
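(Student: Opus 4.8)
The plan is to derive all three statements uniformly from the Ramsey property already established and the KPT correspondence of Proposition \ref{Proposition:ARP}. Fix one of the three situations and put $\mtt C=\mtt{Osy}^R$, where in case \textit{(1)} we take $R=A(F)$ and $\mbb I=\{\ell_\infty^n\}_{n}$, in case \textit{(2)} $R$ is the given separable nuclear operator system and $\mbb I=\{M_q\}_{q}$, and in case \textit{(3)} $R$ is the given $q$-minimal nuclear operator system and $\mbb I=\{\ell_\infty^n(M_q)\}_{n}$. Each $\mbb I$ is an injective class of operator systems in the sense of Definition \ref{matrix_classes_osy}, and by Definition \ref{Definition:exact_osy} the class $\langle\mbb I\rangle$ is, respectively, the class of finite-dimensional function systems, finite-dimensional exact operator systems, and finite-dimensional $q$-minimal operator systems. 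In all three cases $R\in[\mbb I]$---because $A(F)$ is a function system, because a separable nuclear operator system is exact, and because $R$ is $q$-minimal, respectively (Definition \ref{Definition:exact_osy})---and $R$ is nuclear.

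Granting this, I would assemble the Fra\"{\i}ss\'{e} picture using Proposition \ref{Proposition:fraisse-state}: $\langle\mbb I\rangle^R$ is a stable Fra\"{\i}ss\'{e} class with stability modulus $\varpi(\de)=3\de$, and, since $R$ is nuclear, its Fra\"{\i}ss\'{e} limit is the $R$-operator system $E:=(\flim\mbb I,\Omega^R_{\flim\mbb I})$. By Example \ref{Definition:P}, $\flim\mbb I$ is $A(\mbb P)$, $A(\mbb{NP})$, or $A(\mbb P^{(q)})$ in the three cases, so $E$ is precisely the relevant Poulsen-type operator system equipped with its generic $R$-state $\Omega^R$. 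Moreover $[\langle\mbb I\rangle]=[\mbb I]$ (both are characterized by the same approximate-embedding condition on finite-dimensional subspaces), and this readily yields $\langle\langle\mbb I\rangle^R\rangle=\langle\mbb I\rangle^R$; hence $\langle\mbb I\rangle^R$ together with its own local-approximation closure both satisfy the stable amalgamation property with modulus $3\de$, exactly as Proposition \ref{Proposition:ARP} requires.

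Then I would feed in the Ramsey input. Theorem \ref{Theorem:ARP-R-systems} asserts precisely that $\langle\mbb I\rangle^R$---the class of finite-dimensional function (resp.\ exact, resp.\ $q$-minimal) $R$-operator systems---satisfies the stable Ramsey property with modulus $3\de$, hence in particular the approximate Ramsey property. Applying Proposition \ref{Proposition:ARP} with $\mc A=\langle\mbb I\rangle^R$ and $E$ as above, the implication \textit{(6)}$\Rightarrow$\textit{(1)} gives that $\Aut^{\mtt{Osy}^R}(E)$ is extremely amenable. It remains only to unwind notation: by definition, $\Aut^{\mtt{Osy}^R}(\flim\mbb I,\Omega^R_{\flim\mbb I})$ is the subgroup of $\Aut(\flim\mbb I)$ fixing $\Omega^R_{\flim\mbb I}$ under the action $(\alpha,s)\mapsto s\circ\alpha^{-1}$, i.e.\ exactly the stabilizer $\Aut(A(\mbb P),\Omega^{A(F)}_{A(\mbb P)})$, $\Aut(A(\mbb{NP}),\Omega^R_{A(\mbb{NP})})$, or $\Aut(A(\mbb P^{(q)}),\Omega^R_{A(\mbb P^{(q)})})$ in the three cases. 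This establishes the corollary, with case \textit{(1)} recovering \cite[Theorem 3.3]{bartosova_2019}. All of the genuine difficulty here has already been absorbed into the Dual-Ramsey-Theorem argument of Proposition \ref{Proposition:Ramsey-systems} and into the KPT correspondence; the one point that calls for even minor care---the closest thing to an obstacle---is the identification of $R$ as an element of $[\mbb I]$ for the appropriate $\mbb I$, since Proposition \ref{nuclear_injective} is stated for operator spaces rather than operator systems, a matter settled by the corresponding operator-system results of \cite{lupini_fraisse_2018}.
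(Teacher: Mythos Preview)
Your proposal is correct and follows essentially the same approach as the paper: invoke the stable Ramsey property from Theorem~\ref{Theorem:ARP-R-systems} for the class $\langle\mathbb{I}\rangle^R$, identify its Fra\"{\i}ss\'{e} limit via Proposition~\ref{Proposition:fraisse-state} as $(\flim\mathbb{I},\Omega^R_{\flim\mathbb{I}})$, and then apply the KPT correspondence of Proposition~\ref{Proposition:ARP}. The paper states this in a single sentence, while you have usefully unpacked the identifications (in particular the verification that $R\in[\mathbb{I}]$ in each case) that the paper leaves implicit.
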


\subsection{The universal minimal flows of the $\mathrm{Aut}(\mathbb{NP})$}

Using Corollary \ref{Corollary:ea-osy} we can compute the universal minimal
flows of the matrix affine homeomorphism group $\mathrm{Aut}(\mathbb{NP})$
of the noncommutative Poulsen simplex, and the matrix affine homeomorphism
group $\mathrm{Aut}(\mathbb{P}^{(q)})$ of the $q$--minimal Poulsen simplex.
The corresponding result for the Poulsen simplex was obtained in \cite[%
Theorem 3.10]{bartosova_2019}.

\begin{theorem}
\label{Theorem:flows} \mbox{}

\begin{enumerate}[(1)] 

\item The universal minimal flow of $\mathrm{Aut}(\mathbb{NP})$ is the
canonical action $\mathrm{Aut}(\mathbb{NP})\curvearrowright \mathbb{NP}_{1}$.

\item The universal minimal flow of $\mathrm{Aut}(\mathbb{P}^{(q)})$ is the
canonical action $\mathrm{Aut}(\mathbb{P}^{(q)})\curvearrowright \mathbb{P}%
_{1}^{(q)}$.
\end{enumerate}
\end{theorem}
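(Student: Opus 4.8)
The plan is to adapt the argument used for the Poulsen simplex in \cite[Theorem 3.10]{bartosova_2019}, replacing the Fra\"iss\'e theory of finite-dimensional function systems by that of finite-dimensional exact operator systems for part \textit{(1)}, and of finite-dimensional $q$-minimal operator systems for part \textit{(2)}. I will describe \textit{(1)}; part \textit{(2)} is verbatim the same with $\{M_q\}_q$ replaced by $\{\ell_\infty^n(M_q)\}_n$, $A(\mathbb{NP})$ by $A(\mathbb{P}^{(q)})$, $\mathbb{NP}_1$ by $\mathbb{P}_1^{(q)}$, and Corollary \ref{Corollary:ea-osy}\textit{(2)} by Corollary \ref{Corollary:ea-osy}\textit{(3)}. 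First I would move to the operator-system side of the matrix-state-space duality: $\mathrm{Aut}(\mathbb{NP})$ is canonically isomorphic to $\mathrm{Aut}(A(\mathbb{NP}))$, and $\mathbb{NP}_1$ is the Polish space $\mathrm{UCP}(A(\mathbb{NP}),\mathbb{C})$ of states on $A(\mathbb{NP})$ with the weak$^{*}$-topology, on which $\mathrm{Aut}(A(\mathbb{NP}))$ acts by $(\alpha,s)\mapsto s\circ\alpha^{-1}$. Recall that $A(\mathbb{NP})=\flim\{M_q\}_q$. Since $\mathbb{C}=M_1$ is a separable nuclear operator system belonging to $[\{M_q\}_q]$, Proposition \ref{Proposition:fraisse-state} applied with $R=\mathbb{C}$ shows that the Fra\"iss\'e limit of $\langle\{M_q\}_q\rangle^{\mathbb{C}}$ is the $\mathbb{C}$-operator system $(A(\mathbb{NP}),\Omega)$, where $\Omega:=\Omega^{\mathbb{C}}_{A(\mathbb{NP})}$ is the generic state; in particular the $\mathrm{Aut}(A(\mathbb{NP}))$-orbit of $\Omega$ is a dense $G_\delta$ subset of $\mathbb{NP}_1$. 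By Corollary \ref{Corollary:ea-osy}\textit{(2)} with $R=\mathbb{C}$, the stabilizer $G_\Omega:=\mathrm{Aut}(A(\mathbb{NP}),\Omega)$ is extremely amenable.

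The next and main step is to prove that $G:=\mathrm{Aut}(A(\mathbb{NP}))$ acts minimally on $\mathbb{NP}_1$, i.e.\ that $\overline{G\cdot s}=\mathbb{NP}_1$ for every state $s$. Since $\overline{G\cdot s}$ is closed and $G$-invariant and the orbit of $\Omega$ is dense, it suffices to exhibit a point of the generic orbit inside $\overline{G\cdot s}$; equivalently, for every finite-dimensional $X\subseteq A(\mathbb{NP})$ and $\varepsilon>0$ one must find $\alpha\in G$ with $\|(s\circ\alpha^{-1}-\Omega)\upharpoonright X\|_{\mathrm{cb}}\le\varepsilon$. Here one uses the $\infty$-sum amalgamation of $\mathbb{C}$-operator systems behind Proposition \ref{oij32irj23jr32} together with the approximate homogeneity of $A(\mathbb{NP})$: out of $X$ and the two states $s\upharpoonright X$ and $\Omega\upharpoonright X$ one forms a finite-dimensional exact operator system (built on $X\oplus_\infty X$) into which $X$ embeds completely order isometrically in two ways carrying a common state to $s\upharpoonright X$ and to $\Omega\upharpoonright X$ respectively, one realizes this amalgam inside $(A(\mathbb{NP}),\Omega)$ using that the latter is of $\langle\{M_q\}_q\rangle^{\mathbb{C}}$-disposition, and one conjugates the two copies of $X$ by an automorphism of $A(\mathbb{NP})$ supplied by approximate homogeneity. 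Letting $X$ and $\varepsilon$ vary gives $\Omega\in\overline{G\cdot s}$, hence minimality. This amalgamation-and-transfer argument, which is the analogue of the density-of-the-boundary mechanism for the Poulsen simplex, is the technical heart and the step I expect to be the main obstacle.

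Finally, to see that $\mathbb{NP}_1$ is the \emph{universal} minimal $G$-flow, I would combine the extreme amenability of $G_\Omega$ with the minimality just established, exactly as in \cite[Theorem 3.10]{bartosova_2019} and along the lines of Theorem \ref{welmrjpewrwe} and \cite{pestov_dynamics_2006}. The orbit map $\alpha\mapsto\alpha\cdot\Omega$ induces a $G$-equivariant continuous injection of $G/G_\Omega$ onto the dense orbit $G\cdot\Omega\subseteq\mathbb{NP}_1$, and approximate homogeneity of $(A(\mathbb{NP}),\Omega)$ shows that this is a uniform isomorphism onto its image when $G/G_\Omega$ carries its right uniformity, so that $\mathbb{NP}_1$ is the completion of $G/G_\Omega$ (equivalently, $G_\Omega$ is co-precompact in $G$). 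Given any minimal $G$-flow $Z$, extreme amenability of $G_\Omega$ provides a $G_\Omega$-fixed point $z_0\in Z$; then $\alpha\mapsto\alpha\cdot z_0$ is constant on the cosets of $G_\Omega$ and, being uniformly continuous, extends to a continuous $G$-map $\mathbb{NP}_1\to Z$, which is onto because $\mathbb{NP}_1$ is minimal. Thus $\mathbb{NP}_1$, with its canonical $\mathrm{Aut}(\mathbb{NP})$-action, factors onto every minimal flow and is therefore the universal minimal flow of $\mathrm{Aut}(\mathbb{NP})$; the same reasoning with the $q$-minimal injective class $\{\ell_\infty^n(M_q)\}_n$ in place of $\{M_q\}_q$ yields part \textit{(2)}.
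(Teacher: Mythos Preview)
Your overall architecture is the same as the paper's: identify $\mathrm{Aut}(\mathbb{NP})$ with $\mathrm{Aut}(A(\mathbb{NP}))$, use Corollary~\ref{Corollary:ea-osy} to get extreme amenability of the stabilizer $G_\Omega$ of the generic state, show that the orbit map $G/G_\Omega\to\mathbb{NP}_1$ is a uniform equivalence onto its image, and then invoke \cite{melleray_polish_2016}. The universality step at the end is fine and matches the paper.

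The gap is in your minimality argument. What you describe --- form a $\mathbb{C}$-operator system $(Z,t)$ with two embeddings $f,g\colon X\to Z$ satisfying $t\circ f=s\!\upharpoonright_X$ and $t\circ g=\Omega\!\upharpoonright_X$, realize $(Z,t)$ inside $(A(\mathbb{NP}),\Omega)$ via some $h$, then conjugate --- yields equations of the form $\Omega\circ h\circ f=s\!\upharpoonright_X$ and $\Omega\circ h\circ g=\Omega\!\upharpoonright_X$. Every identity you can extract from this involves $\Omega$ composed with embeddings; you never get control over $s\circ(\text{something})$. Concretely, if you take $\gamma\in G$ with $\gamma\!\upharpoonright_X\approx h\circ f$, you obtain $\Omega\circ\gamma\!\upharpoonright_X\approx s\!\upharpoonright_X$, i.e.\ $\gamma^{-1}\cdot\Omega$ is close to $s$. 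That is exactly the statement $s\in\overline{G\cdot\Omega}$, which you already knew from density of the generic orbit. It does \emph{not} give $\Omega\in\overline{G\cdot s}$, which is what minimality requires. The disposition property you are invoking is a property of $(A(\mathbb{NP}),\Omega)$, not of $(A(\mathbb{NP}),s)$, so there is no mechanism here for producing an embedding $\psi\colon X\to A(\mathbb{NP})$ with $s\circ\psi$ prescribed.

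The paper closes this gap with a genuinely different ingredient: the ``state-absorption'' lemma \cite[Lemma~8.10]{lupini_fraisse_2018} (together with \cite[Proposition~5.8]{lupini_fraisse_2018}), which says that for every $d$ and $\varepsilon>0$ there is $m$ such that for \emph{every} state $t$ on $M_m$ and \emph{every} state $s_0$ on $M_d$ there is a complete order embedding $\phi\colon M_d\to M_m$ with $\|t\circ\phi-s_0\|_{\mathrm{cb}}<\varepsilon$. This is a property of the class $\{M_q\}_q$ (and analogously $\{\ell_\infty^n(M_q)\}_n$ for part~\textit{(2)}, via \cite[Lemma~6.25]{lupini_fraisse_2018}) that is not a formal consequence of amalgamation or joint embedding; it asserts that large enough matrix algebras are rich enough that \emph{any} state on them restricts along a suitable embedding to approximately any target state on $M_d$. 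With this in hand, given an arbitrary state $s$ on $A(\mathbb{NP})$ one embeds $M_m$ into $A(\mathbb{NP})$, applies the lemma with $t=s\!\upharpoonright_{M_m}$, and then uses homogeneity of $A(\mathbb{NP})$ to produce $\gamma\in G$ with $s\circ\gamma\!\upharpoonright_X\approx\Omega\!\upharpoonright_X$. You should replace your amalgamation-based minimality paragraph with an appeal to this lemma.
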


\begin{proof}
\noindent \textit{(1)}: The minimality of the action $\mathrm{Aut}(\mathbb{NP}%
)\curvearrowright \mathbb{NP}_{1}$ is a consequence of the following fact:
for any $d\in \mathbb{N}$ and $\varepsilon >0$ there exists $m\in \mathbb{N}$
such that for any $s\in S(M_{d})$ and $t\in S(M_{m})$ there exists a
complete order embedding $\phi :M_{d}\rightarrow M_{m}$ such that $%
\left\Vert t\circ \phi -s\right\Vert _{\mathrm{cb}}<\varepsilon $; see \cite[%
Lemma 8.10]{lupini_fraisse_2018} and \cite[Proposition 5.8]%
{lupini_fraisse_2018}. Consider the generic state $\Omega _{A(\mathbb{NP})}^{%
\mathbb{C}}$ on $A(\mathbb{NP})$. It is shown in \cite[Section 8.2]%
{lupini_fraisse_2018} that $\Omega _{A\left( \mathbb{NP}\right) }^{\mathbb{C}%
}$ is a matrix extreme point of $\mathbb{NP}$ whose $\mathrm{Aut}\left( 
\mathbb{NP}\right) $-orbit is dense in $\mathbb{NP}_{1}$. The stabilizer $%
\mathrm{Aut}(\mathbb{NP},\Omega _{A(\mathbb{NP})}^{\mathbb{C}})$ of $\Omega
_{A(\mathbb{NP})}^{\mathbb{C}}$ is extremely amenable by Corollary \ref%
{Corollary:ea-osy}. The canonical $\mathrm{Aut}(\mathbb{NP})$-equivariant
map from the quotient $\mathrm{Aut}(\mathbb{NP})$-space $\mathrm{Aut}(%
\mathbb{NP})/\hspace{-0.1cm}/\mathrm{Aut}(\mathbb{NP},\Omega _{A(\mathbb{NP}%
)}^{\mathbb{C}})$ to $\mathbb{NP}_{1}$ is a uniform equivalence. This
follows from the homogeneity property of $(A(\mathbb{NP}),\Omega _{A(\mathbb{%
NP})}^{\mathbb{C}})$ as the Fra\"{\i}ss\'{e} limit of the class of
finite-dimensional operator systems with a distinguished state; see also 
\cite[Subsection 5.4]{lupini_fraisse_2018}. This allows one to conclude via
a standard argument---see \cite[Theorem1.2]{melleray_polish_2016}---that the
action $\mathrm{Aut}(\mathbb{NP})\curvearrowright \mathbb{NP}$ is the
universal minimal compact $\mathrm{Aut}(\mathbb{NP})$-space.

\noindent \textit{(2)}: Minimality of the action $\mathrm{Aut}(\mathbb{P}%
^{(q)})\curvearrowright \mathbb{P}_{1}^{(q)}$ is a consequence of a similar
assertion than in \textit{1)}, where $M_{d}$ and $M_{m}$ are replaced with $%
\ell _{\infty }^{d}(M_{q})$ and $\ell _{\infty }^{m}(M_{q})$; see \cite[%
Lemma 6.25]{lupini_fraisse_2018}. The rest of the argument is entirely
analogous.
\end{proof}

\def\cprime{$'$}

\end{document}